\providecommand{\U}[1]{\protect\rule{.1in}{.1in}}
\newcommand{\R}{\mathbb{R}}
\theoremstyle{plain}
\newtheorem{algorithm}{Algorithm}[section]
\newtheorem{corollary}[algorithm]{Corollary}
\newtheorem{definition}[algorithm]{Definition}
\newtheorem{observation}[algorithm]{Observation}
\newtheorem{lemma}[algorithm]{Lemma}
\newtheorem{theorem} [algorithm] {Theorem}
\newtheorem{theoremlet'}[thm]{Theorem$'$}
\newtheorem*{conn}{Connectedness Lemma}
\newtheorem*{period}{Periodicity Lemma}
\newtheorem*{mod2codim4lemma}{$\mathbf{ 4\,(\textrm{mod}\,{8})}$ Codimension Lemma}
\newtheorem*{oddcodimlemma}{Odd Codimension Lemma}
\newtheorem*{codim2mod4}{{\bf$\mathbf{2\,(\textrm{mod}\,{4})}$\,Codimension Lemma}}
\newtheorem*{TA}{Theorem A}
\newtheorem*{TB}{Theorem B}
\newtheorem{remark}[algorithm]{Remark}
\newtheorem{proposition}[algorithm]{Proposition}
\newtheorem*{observe*}{Observation}
\def\codim{\textrm{codim}}
\def\bdm{\begin{displaymath}}
\def\edm{\end{displaymath}}
\def\beq{\begin{equation}}
\def\eeq{\end{equation}}
\def\bes{\begin{equation*}}
\def\ees{\end{equation*}}
\def\lista{\begin{enumerate}}
\def\listb{\end{enumerate}}
\def\epcm{\end{picture}\end{center}\end{minipage}}
\def\bpcm{\begin{minipage}{80pt}\begin{center}\begin{picture}}
\def\t2{T^2}
\def\f4{F_4}
\def\g2{G_2}
\def\p2{\frac{\pi}{2}}
\def\rk{\textrm{rk}}
\def\({\left(}
\def\){\right)}
\def\dim{\textrm{dim}}
\def\<{\langle}
\def\>{\rangle}
\newcommand{\euc}{\raisebox{2pt}{$\chi$}}
 \numberwithin{equation}{section}
  \numberwithin{figure}{section}
\newcommand{\N}{\mathbb{N}}\newcommand{\Z}{\mathbb{Z}}
\renewcommand{\R}{\mathbb{R}}\newcommand{\C}{\mathbb{C}}
\newcommand{\F}{\mathbb{F}}
\newcommand{\RP}{\mathbb{R}\mathrm{P}}
\newcommand{\CP}{\mathbb{C}\mathrm{P}}
\newcommand{\HP}{\mathbb{H}\mathrm{P}}
\newcommand{\FP}{\mathbb{F}\mathrm{P}}
\renewcommand{\(}{\left(}
\renewcommand{\)}{\right)}
\newtheorem*{ack}{Acknowledgements}
\newtheorem*{org}{Organization}
\begin{document}

\newcommand{\comment}[1]{\vspace{5 mm}
\par \noindent
\marginpar{\textsc{Note}}
\framebox{\begin{minipage}[c]{0.95 \textwidth}
#1 \end{minipage}}\vspace{5 mm}\par}

\title[On Fixed-Point Sets of $\Z_2$-Tori in Positive Curvature]{On Fixed-Point Sets of $\Z_2$-Tori in Positive Curvature}

\author[Bosgraaf]{Austin Bosgraaf}
\address[Bosgraaf]{Department of Mathematics, Oregon State University, 
Corvallis, Oregon}
\email{}

\author[Escher]{Christine Escher}
\address[Escher]{Department of Mathematics, Oregon State University, Corvallis, Oregon}
\email{escherc@oregonstate.edu}

\author[Searle]{Catherine Searle}
\address[Searle]{Department of Mathematics, Statistics, and Physics, Wichita State University, Wichita, Kansas}
\email{Catherine.Searle@wichita.edu}

\subjclass[2000]{Primary: 53C20; Secondary: 57S25} 

\date{\today}

%

\begin{abstract}

 In recent work of Kennard, Khalili Samani, and the last author, they generalize the Half-Maximal Symmetry Rank result of Wilking for torus actions on positively curved manifolds to $\mathbb{Z}_2$-tori with a fixed point. They show that if the rank is approximately one-fourth of the dimension of the manifold, then fixed point set components of small co-rank subgroups of the $\Z_2$-torus  are homotopy equivalent to spheres, real projective spaces, complex projective spaces, or lens spaces. In this paper, we lower the bound on the rank of the $\mathbb{Z}_2$-torus to approximately $n/6$ and $n/8$ and are able to classify either the integral cohomology ring or the $\mathbb{Z}_2$-cohomology ring, respectively, of the fixed point set of the $\mathbb{Z}_2$-torus.

\end{abstract}
\maketitle


\section{Introduction}

\pagecolor{white}

The classification of
positively curved manifolds is 
a long standing open problem 
in Riemannian geometry.  
In particular, in dimensions greater than 24 the only known simply connected examples are compact rank-one symmetric spaces, that is, they are one of $S^n$, $\CP^n$, or $\HP^n$. 

The Symmetry Program suggests that one approach such a classification with the additional hypothesis of an isometric group 
action. This program has been quite successful over the last 30 years, producing new tools, new techniques, and even some new examples. The case of torus actions has naturally attracted a great deal of attention, see, for example, work of Grove and the last author \cite{GS}, Rong \cite{R}, Fang and Rong \cite{FR}, Wilking \cite{W}, as well as more recent work of Kennard, Wiemeler, and Wilking \cite{KWW1, KWW2}. It is a natural next step  to study discrete abelian actions. Previous work on positively and non-negatively curved manifolds of dimension $4$ with discrete symmetries  can be found in Yang \cite{Yan94}, Hicks \cite{Hic97}, Fang \cite{Fan08}, and Kim and Lee \cite{KL09}, and  for manifolds of higher dimensions in Fang and Rong \cite{FR},  Su and Wang \cite{SW}, Wang \cite{W}, and most recently in Kennard, Khalili Samani, and the last author \cite{KKSS}.

In this paper, we focus on 
actions of $\Z_2$-tori, that is, $\Z_2^r$-actions. As in the work in \cite{KKSS}, our results require the assumption that the $\Z_2$-torus has a fixed point. 
In Theorem C of \cite{KKSS}, they show that for a closed, positively curved manifold with an effective and isometric  $\Z_2^r$-action on $M^n$ with a fixed point,  
if $n\geq 15$ and $r\geq \frac{n+3}{4} +1$, then for any subgroup $\Z_2^r$ with corank at most four, the fixed-point set component $F^m$ at $x$ is homotopy equivalent to $S^m$, $\RP^m$, $\CP^\frac{m}{2}$, or a lens space; and/or  $M^n$ is a simply connected integer cohomology $\HP^{r-2}$ and $r=\frac{n}4+2$.

In the following theorem, we lower the rank bound from approximately $n/4$ to $n/6 +1$ and classify the fixed-point set components of the $\Z_2$-torus.
Since closed, positively curved manifolds of dimensions $2$ and $3$ are classified by the Gauss-Bonnet Theorem and work of Hamilton \cite{H} and Wolf \cite{Wo}, respectively, we only classify fixed-point set components of dimensions $4$ and above. 

\begin{TA}\label{TA} Let $M^n$ be a closed, positively curved manifold with $n\geq 7$, and assume $\Z_2^r$ acts effectively by isometries on $M$ with a fixed point, $x \in M$. Let $F^m_x$ be a connected component of the fixed-point set of $\Z_2^r$ containing $x$. Suppose that $m\geq 4$ and 
		\[r\geq\frac{n}{6}+1 \,.\]		
\noindent Then one of the following holds:
\begin{enumerate}
\item $F^m$ is homotopy equivalent to one of  $S^m$, $\RP^m$, $\CP^{\frac{m}2}$, or a lens space; or
\item  $\widetilde{F}^m$, the universal cover of $F^m$, has $1$-, $2$-, or $4$-periodic cohomology, that is, $\widetilde{F}^m$ has the integral cohomology ring of   $S^m$, $S^3 \times \HP^{\frac{m-3}{4}}$, $N_j^{m}$ with $m\equiv 2, 3 \pmod 4$, $\CP^{\frac{m}{2}}$, $S^2\times \HP^{\frac{m-2}{4}}$, $E^{m}_{\ell}$ with $\ell\geq 2$ and $m\equiv 2 \pmod 4$, or $\HP^{\frac{m}{4}}$.

\end{enumerate}
\end{TA}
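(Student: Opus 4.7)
The plan is to exploit the isotropy representation of $\Z_2^r$ at the fixed point $x$ in order to produce a rich chain of totally geodesic submanifolds of $M$ containing $F^m$, and then use Wilking's Periodicity and Connectedness Lemmas to transfer cohomological periodicity down to $F^m$ (equivalently, to its universal cover $\widetilde{F}^m$).

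First, I would decompose the isotropy representation on $T_xM$ as $T_xF^m \oplus \bigoplus_{\chi} V_\chi$, where the sum runs over the nontrivial characters $\chi \in \mathrm{Hom}(\Z_2^r, \Z_2)$ with weight space $V_\chi \neq 0$, of dimensions $k_\chi$ satisfying $\sum_\chi k_\chi = n - m$. For each such $\chi$, the kernel $K_\chi \subset \Z_2^r$ has corank one, and its fixed-point set component $N_\chi := \Fix(K_\chi)_x$ is a totally geodesic submanifold of $M^n$ of codimension $k_\chi$ containing $F^m$. By intersecting kernels of subsets of active characters, one obtains a poset of totally geodesic submanifolds sandwiched between $F^m$ and $M$, each carrying positive sectional curvature. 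Under $r \geq n/6 + 1$, the codimension $n - m$ and the number of active characters are tightly constrained, and this combinatorial rigidity is what drives the argument.

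Next, I would apply Wilking's Connectedness Lemma and the corresponding Periodicity Theorem: a totally geodesic inclusion $N^{n-k} \hookrightarrow M^n$ in positive curvature with $2k \leq n$ forces the cohomology of $N$ to be $k$-periodic up to a suitable degree. Chaining these along the poset above, periodicity descends all the way to $F^m$, and lifting it to the universal cover gives periodicity of $\widetilde{F}^m$. The crucial step is to show that the active character data forces the period on $\widetilde{F}^m$ to lie in $\{1,2,4\}$. The main obstacle is combinatorial: one must rule out configurations in which the only chain of totally geodesic submanifolds reaching $F^m$ has gaps of codimension $3$ or codimension $\geq 5$ at every intermediate step. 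The rank hypothesis $r \geq n/6 + 1$ is used essentially here, improving on the KKSS bound $r \geq (n+3)/4 + 1$ by permitting somewhat weaker but still controllable periodicity patterns, via a more delicate chain-and-intersection argument that appeals at key points to Frankel's theorem to force pairs of totally geodesic submanifolds of complementary dimension to meet.

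Finally, once $\widetilde{F}^m$ has $q$-periodic integral cohomology with $q \in \{1,2,4\}$, I would invoke the classification of simply connected, closed, positively curved, $q$-periodic-cohomology manifolds developed by Wilking, Kennard, and collaborators to produce the explicit list of integral cohomology rings appearing in alternative (2) (spheres, $\CP^{m/2}$, $\HP^{m/4}$, and their products or twisted analogues with $S^2$ or $S^3$ factors). The dichotomy with alternative (1) arises whenever the chain produces a totally geodesic submanifold of sufficiently small codimension within $M^n$ to invoke Theorem C of \cite{KKSS} as a subroutine, promoting the cohomological classification of $F^m$ to a homotopy-equivalence classification with $S^m$, $\RP^m$, $\CP^{m/2}$, or a lens space.
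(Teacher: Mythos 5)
Your high-level plan is aligned with the paper's: decompose the isotropy representation at $x$, obtain totally geodesic fixed-point sets of corank-one subgroups, and use the Connectedness and Periodicity Lemmas to classify cohomology. However, there are concrete gaps that your sketch does not address. First, you invoke ``combinatorial rigidity'' to constrain the codimension gaps, but this is exactly where the work lies. The paper relies on the Su--Wang observation (Proposition~\ref{chain}) that one can arrange the chain $F_0\subset F_1\subset\cdots$ so that $k_j\le k_{j+1}$, and then Lemma~\ref{bounds} quantitatively extracts from $r\ge n/6+1$ that $2\le k_j\le 5$ for $j=0,1,2$. A general poset of intersections of kernels does not automatically have monotone codimensions, and without this monotonicity the bound on the early gaps does not follow from the rank hypothesis. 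Second, the Periodicity Lemma gives full $k$-periodicity only when the inclusion is \emph{maximally} connected, i.e.\ $m$-connected, which is strictly stronger than the $(m-k+1)$-connectedness given by Wilking's Connectedness Lemma. The paper secures maximal connectedness via the Borel formula and Observation~\ref{connrep} (packaged as Corollary~\ref{chain!}, requiring $k_{j+1}<2k_j$); your sketch conflates the two notions, and periodicity will not ``descend'' without this extra step.

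Third, the appeal to Frankel's theorem is a red herring: nothing in the argument requires forcing two totally geodesic submanifolds of complementary dimension to intersect, and the paper makes no use of it. Finally, invoking Theorem~C of \cite{KKSS} as a subroutine does not work directly: that theorem requires the much stronger bound $r\ge (n+3)/4+1$, which fails under $r\ge n/6+1$, and applying it to a lower-rank effective action on some $F_j$ only re-raises the threshold problem. The paper instead re-derives and extends what it needs through small-codimension lemmas (Proposition~\ref{codimsmall}, the Odd Codimension Lemma) applied along the chain, with separate case analyses (Lemmas~\ref{l1} and~\ref{l2}) for the non-maximally-connected configurations such as $k_0=2$, $k_1\in\{4,5\}$. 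Your proposal would need to supply these pieces — the monotone chain with the explicit codimension bound, the maximal-connectedness verification, and the correct small-codimension lemmas at each gap — before it becomes a proof.
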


\begin{remark}
   A list of all possible cohomology rings of  $F^m$ is given in Theorem \ref{Atech}. We refer the reader to Definition \ref{k-periodic} and Remark \ref{prdrmk} for the definition of an $n$-manifold with $k$-periodic cohomology, and to Definition \ref{en} for the definition of $E^{m}_{\ell}$ and $N_j^{m}$.
\end{remark}

 We can further lower the rank bound in the following theorem.

\begin{TB}\label{TB} 
Let $M^n$ be a closed, positively curved Riemannian manifold, and assume $\Z_2^r$ acts effectively by isometries on $M$ with a fixed point, $x\in M$, with $n\geq 9$. Suppose that 
		\[r\geq\frac{n}8+2.\]		
\noindent Let $F^m_x$ be a connected component of the fixed-point set of $\Z_2^r$ containing $x$.  Then $H^*(F^m_x;\Z_2)$ is isomorphic as a graded ring to $H^*(N^m/\Gamma;\Z_2)$, where $N^m$ is one of $S^m$, $\CP^{\frac{m}{2}}$, $\HP^{\frac{m}{4}}$, $S^2\times \HP^{\frac{m-2}{4}}$ or  $S^3\times \HP^{\frac{m-3}4}$ and $\Gamma$ is a subgroup of the generalized quaternion group $Q_{16}$ acting freely on $N^m$. 
\end{TB}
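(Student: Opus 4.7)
The plan is to adapt the argument from Theorem A to $\Z_2$-coefficients, trading the weaker rank hypothesis $r\geq n/8+2$ for a classification of $F^m_x$ only up to $\Z_2$-cohomology. First I would examine the isotropy representation of $\Z_2^r$ on $T_xM$: because $\Z_2^r$ is abelian and acts orthogonally, $T_xM$ decomposes as a sum of one-dimensional real eigenspaces, each labeled by a character $\chi\colon \Z_2^r \to \{\pm 1\}$. The fixed component $F^m_x$ is tangent to the trivial-character subspace, and for a subgroup $K\leq \Z_2^r$ the fixed-point component $F_K\supseteq F^m_x$ has codimension equal to the number of characters appearing in $T_xM/T_xF$ that restrict trivially to $K$. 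Counting characters under the rank bound $r\geq n/8+2$ via a pigeonhole argument produces a chain
\[
\Z_2^r = K_0 \supset K_1 \supset \cdots \supset K_s, \qquad F^m_x = F_{K_0} \subsetneq F_{K_1} \subsetneq \cdots \subsetneq F_{K_s} \subseteq M,
\]
in which each consecutive codimension is at most $4$ and $F_{K_s}$ has small codimension in $M$.

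Each inclusion $F_{K_i}\hookrightarrow F_{K_{i-1}}$ is totally geodesic inside a positively curved ambient manifold, so Wilking's Connectedness Lemma gives a highly connected inclusion. Iterating the Periodicity Lemma with $\Z_2$-coefficients along this chain, together with the Codimension Two and Codimension Four Lemmas, forces the $\Z_2$-cohomology of $F^m_x$ to be $k$-periodic in the sense of Definition~\ref{k-periodic} for some $k\in\{1,2,3,4\}$. The rank bound is calibrated precisely to make this iteration terminate before one runs out of characters to split off.

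Passing to the universal cover $\widetilde{F}^m$, which is itself closed and positively curved, I would next invoke the classification of simply connected positively curved manifolds whose $\Z_2$-cohomology is $k$-periodic for $k\leq 4$ in dimensions $m\geq 4$. This identifies $\widetilde{F}^m$ as having the $\Z_2$-cohomology ring of one of $S^m$, $\CP^{m/2}$, $\HP^{m/4}$, $S^2\times \HP^{(m-2)/4}$, or $S^3\times \HP^{(m-3)/4}$, a list strictly smaller than the integral one of Theorem A because mod-$2$ reduction collapses the $N^m_j$ and $E^m_\ell$ models onto these standard ones.

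The remaining, and I expect hardest, step is to show that $\Gamma := \pi_1(F^m_x)$ is a subgroup of the generalized quaternion group $Q_{16}$ acting freely on the model $N^m$. By lifting an appropriate complement of $\Z_2^r$ in the normalizer of $\Z_2^r$ inside $\Isom(M)$ to the universal cover, one obtains a residual action on $\widetilde{F}^m$ by commuting involutions that must normalize $\Gamma$. Combined with Hopf's classification of finite groups acting freely on spheres and the corresponding rigidity for free actions on $\CP^{m/2}$, $\HP^{m/4}$, and the two $\HP$-bundle models (where holomorphic/quaternionic structures must be preserved), these centralizer conditions squeeze $\Gamma$ into $Q_{16}$. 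This is the main obstacle because the $\Z_2$-cohomological setting does not retain enough integral information to rule out unwanted free quotients on cohomological grounds, so the argument must route entirely through the lifted geometric action and, in particular, rule out odd-order cyclic and non-quaternionic metacyclic candidates case-by-case.
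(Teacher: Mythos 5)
Your high-level plan (build a chain of fixed-point sets via the isotropy representation, run the Connectedness Lemma and Periodicity Lemma with $\Z_2$-coefficients down the chain, then classify) is broadly the right idea, but there are two substantive gaps.

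First, the codimension bound you claim is wrong: under $r\geq \frac{n}{8}+2$ (which is $b=8$ in Lemma~\ref{bounds}), the bottom codimensions $k_j$ for $0\leq j\leq 3$ are constrained to $2\leq k_j\leq 7$, not $\leq 4$. Codimensions $5$, $6$, and $7$ are exactly what makes Theorem~B harder than Theorem~A, and the paper proves three dedicated higher-codimension lemmas (the Odd Codimension Lemma, the $2\pmod 4$ Codimension Lemma, and the $4\pmod 8$ Codimension Lemma) precisely to handle them. These in turn depend on Kennard's $\Z_2$-periodicity theorem (the minimal periodicity degree is a power of $2$ dividing $k$), which also shows that your ``$k\in\{1,2,3,4\}$'' should just be $k\in\{1,2,4\}$; $3$-periodicity never arises. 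Your sketch would fail already at codimension $5$ without these inputs, and the separate case analysis of Lemma~\ref{l3} (which handles $k_1\geq 2k_0$ where $F_0\hookrightarrow F_1$ is not maximally connected) has no analogue in your outline.

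Second, you have misread the conclusion of Theorem~B, which then leads you into a geometric detour the paper does not take and that I do not believe can be made to work as described. The theorem does \emph{not} assert that $\pi_1(F^m_x)$ is a subgroup of $Q_{16}$; it only asserts a graded-ring isomorphism $H^*(F^m_x;\Z_2)\cong H^*(N^m/\Gamma;\Z_2)$. Consequently there is no need (and no way, at this level of generality) to nail down $\pi_1(F^m_x)$ by ``lifting a complement of $\Z_2^r$ in the normalizer inside $\Isom(M)$.'' The normalizer of $\Z_2^r$ need not contain useful extra involutions, a complement need not exist, and even if it did, it is unclear why the lifted action would normalize $\pi_1(F^m_x)$ in a way that forces quaternionic structure. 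The paper instead gets the $Q_{16}$ restriction by \emph{purely cohomological} means: $\Z_p\times\Z_p$ cannot act freely on a $\Z_p$-cohomology sphere (Proposition~\ref{zpzp}), a free $\Z_2\times\Z_2$-action on a $\Z_2$-cohomology $S^3\times\HP^m$ destroys $4$-periodicity of the quotient (Lemma~\ref{freez2}), hence $Syl_2(\pi_1)$ is cyclic or generalized quaternion (Corollary~\ref{sylc}), and then a direct Serre spectral sequence computation for the Borel fibration (Proposition~\ref{s3quotient}, Theorem~\ref{z2anypi1}) shows the only possible $\Z_2$-cohomology ring types are those of $N^m/\Gamma$ with $\Gamma\leq Q_{16}$, regardless of what $\pi_1(F^m_x)$ actually is. You should replace the geometric lifting step with this spectral-sequence argument.
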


\begin{remark}
    We can actually show more in Theorem \hyperref[TB]{B} depending on the codimension of the inclusion of $F^m_x$ in a fixed point set component of a corank one subgroup of $\Z^r_2$ of minimal dimension. We list the possibilities that can occur  in Theorem \ref{Btech}.
\end{remark}

We briefly outline the strategy to prove Theorems \hyperref[TA]{A} and \hyperref[TB]{B}. We begin by establishing some notation. Let $F_j$ denote the fixed-point set component of a corank-$j$ subgroup of $\Z_2^r$ at $p$, with $0\leq j\leq r-1$. Let $m_j=\dim(F_j)$ and  $k_j$ denote $\codim(F_j\subset F_{j+1})$. Then, as observed in \cite{SW} (see Proposition \ref{chain}), one can construct an ascending chain of fixed-point set components $F_j$ such that for each $j$,  $k_j\leq k_{j+1}$. Since closed manifolds of positive curvature of dimension $\leq 3$ are classified and both results easily follow if the codimension is equal to one, one can then assume that $\dim(F_0)\geq 4$ and that $k_0\geq 2$.  Combining this information with the lower bounds on the rank of the $\Z_2$-torus in Theorems \hyperref[TA]{A} and \hyperref[TB]{B}, one sees that for small $j$ the codimensions of the corresponding $F_j \subset F_{j+1}$ are bounded between $2$ and $5$ and $2$ and $7$, respectively. Armed with these restrictions and generalizations of  the small codimension lemmas in \cite{KKSS} to almost all higher dimensions, one obtains the results.

\begin{org} This paper is organized as follows. We collect preliminary results in Section \ref{2}. In Section \ref{3}, we prove the higher codimension lemmas we need to decrease the bound on the rank of the $\Z_2$-torus in Theorems A and B. In Section \ref{4}, we prove various technical lemmas about the lower part of the chain of fixed-point set components. We then leverage these results in Sections \ref{5}  and \ref{6} to prove Theorems \hyperref[TA]{A} and \hyperref[TB]{B}, respectively.
    \end{org}
    
\begin{ack} A portion of this work draws from the PhD thesis of A. Bosgraaf. The authors are grateful to L. Kennard for numerous helpful conversations. C. Escher acknowledges support from the Simons Foundation (\#585481, C. Escher). C. Searle was partially supported by NSF Grant DMS-1906404 and DMS-2204324. This material is based upon work supported by the National Science
Foundation under Grant No. DMS-1928930, while C. Escher and C. Searle were in
residence at the Simons Laufer Mathematical Sciences Institute
(formerly MSRI) in Berkeley, California, during the Fall 2024
semester. 

\end{ack}

\section{Preliminaries}\label{2}

In this section we collect preliminary notions and results that we use in the proofs of Theorems \hyperref[TA]{A} and \hyperref[TB]{B}.

\subsection{Covering spaces}
We begin by 
 collecting some known results on covering spaces.  Throughout, we denote the universal cover of a manifold, $M$, by $\widetilde M$. We first state Proposition 4.1 from Hatcher \cite{Hat}.
 \begin{proposition}
\textup{\cite{Hat}}\label{covergroups}
The map $p_*:\pi_k(\widebar{X}, \widebar{x}_0)\to\pi_k(X, x_0)$ induced by a covering projection \newline $p :(\widebar{X}, \widebar{x}_0)\to(X, x_0)$ is injective if $k=1$, and an isomorphism if $k\geq 2$.
\end{proposition}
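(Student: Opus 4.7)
The plan is to deduce everything from the homotopy lifting property of the covering projection $p$ together with (for the surjectivity statement) the general lifting criterion for maps from a simply connected space.

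For injectivity in all degrees $k\geq 1$, I would let $\widebar\gamma \colon (S^k,s_0)\to(\widebar X,\widebar x_0)$ represent a class in the kernel of $p_*$, so that $p\circ\widebar\gamma$ admits a based nullhomotopy $H\colon S^k\times I\to X$ with $H(\cdot,0)=p\circ\widebar\gamma$, $H(\cdot,1)\equiv x_0$, and $H(s_0,t)\equiv x_0$. Apply the homotopy lifting property with initial lift $\widebar\gamma$ to obtain $\widebar H\colon S^k\times I\to \widebar X$ covering $H$. The restriction $\widebar H(\cdot,1)$ takes values in the discrete fiber $p^{-1}(x_0)$, and because $S^k\times\{1\}$ is connected and $\widebar H(s_0,1)=\widebar x_0$, this restriction is the constant map at $\widebar x_0$. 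Similarly $\widebar H(s_0,\cdot)$ is constant at $\widebar x_0$, so $\widebar H$ is a based nullhomotopy of $\widebar\gamma$. This shows $\ker p_*$ is trivial.

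For surjectivity when $k\geq 2$, given $\gamma\colon (S^k,s_0)\to(X,x_0)$ I would invoke the lifting criterion: a based map from a path-connected, locally path-connected space $Y$ to $X$ lifts through $p$ to a based map into $\widebar X$ if and only if its induced map on $\pi_1$ lands inside $p_*(\pi_1(\widebar X,\widebar x_0))$. Since $S^k$ is simply connected for $k\geq 2$, this condition is automatic, producing a based lift $\widebar\gamma\colon (S^k,s_0)\to(\widebar X,\widebar x_0)$ with $p_*[\widebar\gamma]=[\gamma]$.

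The main (and only nontrivial) obstacle is really just ensuring the hypotheses of the two abstract tools apply. The homotopy lifting property holds for any covering projection and any domain, so no regularity issue arises in the injectivity argument. The lifting criterion does require $S^k$ to be locally path-connected, which is standard; the simple connectedness of $S^k$ for $k\geq 2$ is what makes the criterion vacuous. Combining the two arguments then gives injectivity in degree $1$ and bijectivity in degrees $\geq 2$, as claimed.
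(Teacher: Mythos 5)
Your proof is correct and is essentially the argument given in the cited reference (Hatcher, Proposition 4.1); the paper itself does not reproduce a proof but simply quotes Hatcher. Both your injectivity argument (lifting a nullhomotopy via the homotopy lifting property and using discreteness of the fiber) and your surjectivity argument (the lifting criterion applied to the simply connected $S^k$ for $k\geq 2$) match the standard treatment.
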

 To prove Theorems \hyperref[TA]{A} and \hyperref[TB]{B}, we often find ourselves in the situation where we need to lift a $k$-connected  embedded submanifold $N$ of $M$ to the universal cover of $M$.
Since 
the covering projection, $p:\widetilde{M}\rightarrow M$ is a local homeomorphism,  a connected component $\widebar{N}$ of $p^{-1}(N)$ is an embedded submanifold of $\widetilde{M}$. Consider the following commutative diagram:
\[\begin{tikzcd}
			{\pi_i(\widebar{N})} & {\pi_i(\widetilde{M})} \\
			\pi_i(N) & \pi_i(M)
			\arrow["p_*"', from=1-1, to=2-1]
			\arrow["p_*", from=1-2, to=2-2]
			\arrow["\iota_*"', from=2-1, to=2-2]
			\arrow["\tilde\iota_*", from=1-1, to=1-2]\,.
		\end{tikzcd}\]
Applying Proposition \ref{covergroups} and using the $k$-connectivity of $N \hookrightarrow M$, gives us that $\tilde\iota_*:\pi_i(\bar{N})\to \pi_i(\widetilde{M})$ is an isomorphism for $1\leq i\leq k-1$ and surjective for $i=k$. In particular,  we obtain the following result.

\begin{lemma}\label{connlifting} 
Let $M$ be a manifold,  $N\subseteq M$ an embedded submanifold of $M$ with $\dim(N)\geq 2$, and $p:\widebar{M}\to M$
 a covering projection.  Suppose $N \hookrightarrow M$ is $k$-connected with $k \ge 2$ and $\widebar{N}$ is a connected component of $p^{-1}(N)$.  Then $\widebar{N}\hookrightarrow \widebar{M}$ is $k$-connected, and if $\widebar{M}$ is the universal cover of $M$, then $p|_{\widebar N}:\widebar{N}\to N$ is the universal cover of $N$.
\end{lemma}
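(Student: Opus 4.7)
The plan is to exploit the commutative square already drawn in the excerpt. By Proposition \ref{covergroups}, the two vertical maps $p_*$ are isomorphisms for $i \geq 2$ and injective for $i = 1$, and by hypothesis the bottom horizontal map $\iota_*$ is an isomorphism for $i < k$ and a surjection for $i = k$. Hence for $i \geq 2$ a routine diagram chase immediately transports the property of $\iota_*$ to $\tilde\iota_*$, which settles the $k$-connectivity claim in all degrees $2 \leq i \leq k$.

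The remaining and only delicate step is $\pi_1$. Injectivity of $\tilde\iota_*$ on $\pi_1$ still drops out of the diagram, since $\iota_*$ is a $\pi_1$-isomorphism (using $k \geq 2$) and $p_*$ is injective, so a two-line chase eliminates the kernel. For surjectivity my plan is to step outside the diagram and invoke the classification of coverings: the restriction $p|_{\overline{N}} : \overline{N} \to N$ is itself a covering map, because a connected component of the preimage of a path-connected subspace under a covering is always a covering of that subspace. This covering is classified by the subgroup $\iota_*^{-1}\bigl(p_*\pi_1(\overline{M})\bigr) \leq \pi_1(N)$. Since $\iota_*$ is a $\pi_1$-isomorphism, this subgroup is carried isomorphically onto $p_*\pi_1(\overline{M})$, and composing with the inverse of the injection $p_* : \pi_1(\overline{M}) \hookrightarrow \pi_1(M)$ identifies $\tilde\iota_*$ with this very isomorphism. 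This is the step I would flag as the genuine obstacle, since Proposition \ref{covergroups} provides no surjectivity at level one; one cannot finish by pure diagram chasing.

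For the universal cover statement, if $\overline{M}$ is the universal cover of $M$ then $\pi_1(\overline{M}) = 0$, so the subgroup of $\pi_1(N)$ classifying $p|_{\overline{N}}$ is $\iota_*^{-1}(0) = 0$ (again using that $\iota_*$ is a $\pi_1$-isomorphism). Therefore $\pi_1(\overline{N}) = 0$, and $p|_{\overline{N}} : \overline{N} \to N$ is a covering with simply connected total space, hence the universal cover of $N$.
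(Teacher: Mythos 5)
Your proof is correct, and in one respect it is more careful than the paper's. For $i \geq 2$, and for injectivity of $\tilde\iota_*$ at $\pi_1$, you and the paper run the same diagram chase through Proposition \ref{covergroups}. The paper then simply asserts that $\tilde\iota_*$ is an isomorphism on $\pi_1$ as well, but you are right to flag that for an \emph{arbitrary} cover $\bar M \to M$ the surjectivity at $\pi_1$ does not fall out of the square: there the vertical maps $p_*$ are merely injective, so an element of $\pi_1(\bar M)$ has no evident preimage in $\pi_1(\bar N)$. Your additional step — recognizing $p|_{\bar N}:\bar N\to N$ as the connected covering of $N$ classified by $\iota_*^{-1}\bigl(p_*\pi_1(\bar M)\bigr)$, and then composing the three resulting isomorphisms, using $\iota_*$ an isomorphism on $\pi_1$ because $k\geq 2$ — is exactly the extra covering-space input needed, and the computation is correct. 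One reason the paper's brevity is harmless in practice is that both the surrounding discussion and every application in the paper take $\bar M = \widetilde M$ to be the universal cover, where $\pi_1(\widetilde M)=0$ renders surjectivity vacuous; but the lemma is stated for arbitrary covers, and for that generality your classification argument is genuinely needed. Your handling of the final assertion that $p|_{\bar N}$ is the universal cover of $N$ is also correct and matches what the paper leaves implicit.
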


Proposition 3G.1 in \cite{Hat}, included below, uses the transfer homomorphism to give us information about the cohomology of a finite-sheeted covering space.
\begin{proposition}\textup{\cite{Hat}}\label{transfer} Let $\pi:\bar{X}\to X$ be an $n$-sheeted covering space. Then with coefficients in a field $\F$ whose characteristic is 0 or a prime, $p$, such that $p\nmid n$, the map 
$\pi^*:H^k(X;\F)\to H^k(\widebar{X};\F)$
is injective. 
\end{proposition}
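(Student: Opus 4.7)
The plan is to construct a transfer homomorphism $\tau\colon H^k(\widebar{X};\F)\to H^k(X;\F)$ going in the opposite direction from $\pi^*$, and to show that the composition $\tau\circ\pi^*$ equals multiplication by $n$ on $H^k(X;\F)$. Since $n$ is a unit in $\F$ by the hypothesis on the characteristic, this composition is an isomorphism, which forces $\pi^*$ to be injective.

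To construct the transfer, I would first work at the chain level. Given a singular simplex $\sigma\colon\Delta^k\to X$, the simply-connectedness of $\Delta^k$ together with the lifting criterion guarantees that $\sigma$ admits exactly $n$ lifts $\tilde\sigma_1,\dots,\tilde\sigma_n$ in $\widebar{X}$, one per sheet. Define $\tau_\bullet\colon C_\bullet(X;\F)\to C_\bullet(\widebar{X};\F)$ on basis elements by $\tau_\bullet(\sigma)=\sum_{i=1}^n \tilde\sigma_i$ and extend $\F$-linearly. A short check using uniqueness of path lifts shows that restricting the $\tilde\sigma_i$ to the $j$-th face of $\Delta^k$ recovers the full collection of lifts of the corresponding face of $\sigma$, so $\tau_\bullet$ commutes with the boundary operator and is a chain map. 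Dualizing gives a cochain map, which induces the desired map $\tau$ on cohomology.

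The composition $\tau\circ\pi^*$ can be computed already at the cochain level. For a cochain $\psi\in C^k(X;\F)$ and a singular simplex $\sigma$ in $X$, since each lift $\tilde\sigma_i$ satisfies $\pi\circ\tilde\sigma_i=\sigma$, one has
\[
(\tau\circ\pi^*)(\psi)(\sigma)=\psi\Bigl(\sum_{i=1}^n \pi\circ\tilde\sigma_i\Bigr)=\psi(n\sigma)=n\,\psi(\sigma).
\]
Hence $\tau\circ\pi^*$ is multiplication by $n$ on $C^k(X;\F)$, and the same identity descends to $H^k(X;\F)$.

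Finally, the assumption that $\mathrm{char}(\F)$ is $0$ or a prime $p\nmid n$ means $n$ is invertible in $\F$, so multiplication by $n$ on $H^k(X;\F)$ is an isomorphism. Consequently $\pi^*$ has a left inverse (up to the scalar $n$) and is in particular injective. The main technical point is the verification that the chain-level transfer is well-defined and commutes with the boundary; this is a bookkeeping exercise relying on the unique path-lifting property of the covering applied to the simply-connected simplex $\Delta^k$, and it is the only step that requires any genuine care.
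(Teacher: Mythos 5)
Your proposal is correct and is essentially the standard transfer argument; the paper does not reprove this statement but cites Hatcher's Proposition 3G.1, whose proof is precisely the chain-level transfer construction you give (sum over the $n$ lifts of a singular simplex, check compatibility with $\partial$ via unique path lifting, dualize, and observe $\tau\circ\pi^*=n\cdot\mathrm{id}$). Since $n$ is invertible in $\F$, injectivity of $\pi^*$ follows, exactly as you argue.
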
 
Theorem 3.4 from \cite{KKSS}, stated below, allows us to identify certain manifolds $M$ up to homotopy equivalence, provided we have information about their fundamental groups and the cohomology ring of their universal covers. 
\begin{theorem} \textup{\cite{KKSS}} \label{universal} 
Let $M^n$ be a closed smooth manifold. Then the following hold:
\begin{enumerate}
\item If $\pi_1(M)$ is cyclic and $\widetilde{M}$ is a $\Z$-cohomology sphere, then $M$ is homotopy equivalent to $S^n$, $\RP^n$, or a lens space $S^n/\Z_l$ for $l\geq 3$. 
\item If $M$ is simply connected and has the integer cohomology of $\CP^{n/2}$, then $M$ is homotopy equivalent to $\CP^{n/2}$.
\end{enumerate}
\end{theorem}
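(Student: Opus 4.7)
For part (1), the plan is to first upgrade the hypothesis on $\widetilde{M}$ to a homotopy equivalence $\widetilde{M}\simeq S^n$, and then to produce a homotopy equivalence from $M$ to a standard quotient. Since $\widetilde{M}$ is simply connected with the integral homology of $S^n$, the Hurewicz theorem gives $\pi_n(\widetilde{M})\cong\Z$; picking a generator yields a map $S^n\to\widetilde{M}$ that is an integral homology isomorphism and hence a homotopy equivalence by Whitehead. The cyclic group $\pi_1(M)=\Z_l$ then acts freely on $\widetilde{M}$ by deck transformations. The cases $l=1$ and $l=2$, where the model is $S^n$ or $\RP^n$, are classical; for $l\geq 3$, a Lefschetz fixed-point/Euler-characteristic argument forces $n$ odd, and we wish to compare $M$ to a standard linear lens space $L=S^n/\Z_l$.

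To produce a map $f:M\to L$ realizing a chosen isomorphism $\pi_1(M)\cong\pi_1(L)$, I would proceed by obstruction theory. Because $\widetilde L=S^n$, we have $\pi_k(L)=0$ for $2\leq k\leq n-1$, so the obstructions to extending a map $M\to L$ lying in $H^{k+1}(M;\pi_k(L))$ vanish in that range, and we can build $f$ freely up through the $(n-1)$-skeleton of $M$. The remaining obstruction to extending over the $n$-skeleton sits in $H^n(M;\pi_{n-1}(L))$; by adjusting the map on a top cell, it can be arranged so that the induced map $\widetilde f:\widetilde M\to\widetilde L$ has degree $\pm 1$. A transfer argument or the Cartan--Leray spectral sequence of $\Z_l\curvearrowright S^n$ shows that $M$ and $L$ have isomorphic integral cohomology, and once $\widetilde f$ has degree $\pm 1$ on universal covers, $f^*$ is a cohomology, and hence homology, isomorphism. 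Whitehead's theorem applied to the simply connected $\widetilde f$ gives that it is a homotopy equivalence, and therefore so is $f$.

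For part (2), the approach is classical. Let $\alpha\in H^2(M;\Z)$ generate $H^*(M;\Z)\cong\Z[\alpha]/(\alpha^{n/2+1})$. Since $\CP^\infty=K(\Z,2)$, the class $\alpha$ is represented by a map $M\to\CP^\infty$; as $\CP^{n/2}\hookrightarrow\CP^\infty$ is $(n+1)$-connected and $M$ is an $n$-dimensional CW complex, cellular approximation yields a map $\hat f:M\to\CP^{n/2}$ with $\hat f^*$ sending the standard generator to $\alpha$. Multiplicativity of the cup product then forces $\hat f^*$ to be an isomorphism in every degree, and since both spaces are simply connected, Whitehead's theorem (via Hurewicz and universal coefficients) yields that $\hat f$ is a homotopy equivalence. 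The main obstacle is the obstruction-theoretic step in part (1) for $l\geq 3$: matching the top-dimensional $k$-invariant of $M$ with that of a standard lens space requires controlling the degree of the classifying map on universal covers, and this is the key computation to pin down.
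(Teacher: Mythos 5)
The paper does not actually prove this result; it is quoted verbatim as Theorem~3.4 of \cite{KKSS}, so there is no internal proof to compare your attempt against. Judged on its own terms, your treatment of Part~(2) is correct and standard: representing the degree-two generator by a map to $K(\Z,2)=\CP^\infty$, compressing into $\CP^{n/2}$ using its $(n+1)$-connectivity, and invoking the cup-product structure plus Whitehead is exactly the classical argument.

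For Part~(1), the overall scaffolding is right, but the step you yourself flag as ``the key computation'' is a genuine gap, not a routine adjustment. Once you fix the target $L$ and the isomorphism $\pi_1(M)\cong\pi_1(L)$, the map $f$ built over the $(n-1)$-skeleton is essentially determined up to changing it on a top cell, and changing on a top cell alters $\deg\widetilde{f}$ only by multiples of $l$. So $\deg\widetilde{f}\pmod{l}$ is \emph{fixed} by the choice of $L$; it is not something you can tune to $\pm1$ after the fact. What you actually need is that among the linear lens spaces with fundamental group $\Z_l$ there exists one whose first Postnikov $k$-invariant in $H^{n+1}(B\Z_l;\Z)\cong\Z_l$ matches that of $M$ (up to the allowed automorphisms), so that the resulting $f$ has $\deg\widetilde{f}\equiv\pm1\pmod{l}$ and then the top cell can finish the job. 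That existence is precisely the classical homotopy classification of lens spaces (Olum, de~Rham, Franz), combined with the fact that Poincar\'e duality forces the $k$-invariant of $M$ to be a unit of $\Z_l$. Without citing or reproving that classification, the ``adjust the top cell to get degree $\pm1$'' step does not go through; with it, your outline becomes a correct proof. Minor additional remark: the $l=2$ cases you wave off as ``classical'' do require a word in even dimensions (non-orientability and $\Z_2$-coefficients), but nothing goes wrong there.
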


\subsection{Periodic cohomology}

We begin with the definition of a periodic cohomology ring as in the work of Nienhaus \cite{N}.  

\begin{definition}\cite{N}\label{k-periodic} Let $M$ be a connected topological space, $R$ a ring, and $c$ a positive integer. We say that $x\in H^k(M;R)$ induces $k$-periodicity up to degree $c$, or $M$ has $k$-periodic $R$-cohomology up to degree $c$, provided either
\lista
    \item $k\leq \frac{c}2$ \text{ and } $\smile x: H^i(M;R)\to H^{i+k}(M;R)$ is surjective for $0\leq i<c-k$ and injective for $0< i\leq c-k$; or,
    \item $\frac{c}{2}<k\leq c$ and $x$ is a product of periodicity-inducing elements of degree  $\leq \frac{c}{2}$. 
\end{enumerate}
If $M$ is a closed manifold of dimension $c$, then we say that $M$ has $k$-periodic $R$-cohomology.
    
\end{definition}

\begin{remark}\label{prdrmk}
The definition of a $k$-periodic cohomology ring of a manifold used in \cite{Kennard} stated that a manifold was $k$-periodic provided Part 1 of Definition \ref{k-periodic} held for any $k\leq n$. There are problems with this definition, as pointed out in \cite{N}, if $k>n/2$. In this paper, we are only concerned with manifolds that are at most four-periodic, so by convention, $6$- and $7$-dimensional 
manifolds satisfying the previous definition for $k=4$ are also called four-periodic.
\end{remark}

The \hyperref[periodicity]{Periodicity Lemma}  stated below, for cohomology rings with either integral coefficients due to \cite{Wi} or $\Z_2$ coefficients due to \cite{N},  gives information about  the cohomology ring of a closed manifold $M$ with a highly connected submanifold $N$. 

\begin{period}\textup{ \cite{Wi, N}} \label{periodicity}
Let $R$ be $\Z$ or $\Z_2$ and $M^{n+k}$ a closed differentiable $R$-oriented manifold. Suppose $N^{n}$ is an embedded, closed, $R$-oriented submanifold of $M$ such that the inclusion $\iota :N^{n}\to M^{n+k}$ is $(n-l)$-connected and $n-2l>0$. Let $\iota_*[N]\in H_{n}(M;R)$ be the image of the fundamental class of $N$ in $H_{*}(M;R)$, and let $x\in H^k(M;R)$ be its Poincar\'e dual. Then the homomorphism
\[\smile x:H^{i}(M;R)\to H^{i+k}(M;R)\]
is surjective for $l\leq i<n-l$ and injective for $l<i\leq n-l$. In particular, $\smile x$ is an isomorphism for $l<i<n-l$.

\end{period}

In the Periodicity Lemma, note that when $l=0$, that is, when $N$ is maximally connected in $M$, and both are $R$-oriented, then $M$ has $k$-periodic cohomology with coefficients in $R$. 

\begin{remark}\label{surj}  If $M$ is connected and $H^*(M;R)$ is $k$-periodic, then
		\[\mathrm{rank}_R\(H^{ik}(M;R)\)\leq \mathrm{rank}_R\(H^0(M;R)\) = 1\,,\]		
\noindent and, furthermore, if $R$ is a field or $\Z$, then $x^i$ generates $H^{ik}(M;R)$ for $0<i<c/k$, where $c$ is the largest degree of $H^*(M; R)$ for which some non-trivial element induces periodicity.
\end{remark}

When $R=\Z_p$ or more generally a field, periodicity can be refined in some cases.
For cohomology with coefficients in an arbitrary ring, Lemma 2.5 from \cite{N} characterizes elements cupping to a periodicity-inducing element.

\begin{lemma}{\cite{N}} \label{factor} Let $R$ be a ring with identity.
 If $x\in H^*(M; R)$ induces periodicity and $x = y\smile z$, then $y$ and $z$ also induce periodicity.
 \end{lemma}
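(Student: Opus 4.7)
The plan is to exploit the factorization of cup product maps induced by $x = y \smile z$ together with its graded-commutative mirror, and to apply Poincar\'e duality on the closed $R$-orientable manifold $M$ to complete the required periodicity ranges. Set $a = \deg y$, $b = \deg z$, and $k = a + b = \deg x$. Associativity of cup product gives the factorization $\smile x = (\smile z) \circ (\smile y)$, while graded commutativity gives $\smile x = \pm (\smile y) \circ (\smile z)$.

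For Case 1 of Definition \ref{k-periodic}, where $k \leq c/2$ and hence $a,b\leq c/2$, I would argue as follows. From the first factorization, injectivity of $\smile x$ on $H^i(M;R)$ for $i \in (0, c-k]$ yields injectivity of $\smile y$ on the same range; combined with the injectivity of $\smile z$ obtained analogously from the mirror factorization, surjectivity of $\smile x$ on $[0, c-k)$ yields surjectivity of $\smile y$ on the initial segment $[0, c - 2a - b]$. Running the same argument with the roles of $y$ and $z$ swapped, via the mirror factorization, produces surjectivity of $\smile y$ on $[b, c-a)$ as well. Since $k \leq c/2$ forces $c - 2a - b \geq b$, these two intervals together cover all of $[0, c-a)$.

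To obtain the missing injectivity range $(c-k, c-a]$ for $\smile y$, I would invoke Poincar\'e duality on the closed $R$-orientable $c$-manifold $M$: the cup-product pairing $H^i(M;R) \otimes H^{c-i}(M;R) \to R$ is perfect (for $R$ a field; perfect modulo torsion for $R = \Z$), and under it $\smile y : H^i \to H^{i+a}$ is the transpose of $\pm \smile y : H^{c-i-a} \to H^{c-i}$. Thus the just-established surjectivity of $\smile y$ on $[0, c-a)$ translates to injectivity on $(0, c-a]$, completing the Case 1 conditions for $y$; the parallel argument works for $z$.

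The main obstacle is Case 2 of Definition \ref{k-periodic}, where $k > c/2$ and $x$ is given only as a product of periodicity-inducing elements of degree $\leq c/2$. The composition argument above does not apply directly, since $\smile x$ is not guaranteed to be surjective or injective at the degrees required. Instead, I would analyze how a given factorization $x = y \smile z$ interacts with the structural factorization of $x$ into small-degree periodicity inducers, using degree bookkeeping to express $y$ (and $z$) themselves as products of periodicity-inducing classes of degree $\leq c/2$ whenever their own degrees exceed $c/2$, and reducing to the Case 1 argument when they do not. The delicate point is that arbitrary factorizations in the graded-commutative ring $H^*(M;R)$ need not match the structural decomposition supplied by the definition, so matching degrees and extracting the required small-degree decomposition for $y$ or $z$ is the technical heart of the proof.
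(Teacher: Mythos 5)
Your Case~1 analysis (where $k=\deg x\leq c/2$) has the right skeleton, and the interval bookkeeping is correct: the two composition identities $\smile x=(\smile z)\circ(\smile y)$ and $\smile x=\pm(\smile y)\circ(\smile z)$, together with the symmetric statements for $z$, give injectivity of $\smile y$ on $(0,c-k]$ and surjectivity of $\smile y$ on $[0,c-2a-b]\cup[b,c-a)=[0,c-a)$ using $c\geq 2k$. But the appeal to Poincar\'e duality to obtain the remaining injectivity range $(c-k,c-a]$ is both avoidable and problematic: the lemma is stated for an arbitrary ring $R$ with identity and is applied in this paper with $R=\Z$, where the cup-product pairing is not perfect in the presence of torsion --- a point you flag but do not resolve; the definition in \ref{k-periodic} is also stated for connected topological spaces, not only manifolds. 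A purely algebraic argument closes the gap: for $i\in(c-k,c-a]$ and $u\in H^i(M;R)$ with $u\smile y=0$, note that $i-b$ lies in $[a,c-b)$, the range where $\smile z$ was already shown to be surjective, so $u=v\smile z$ for some $v\in H^{i-b}(M;R)$; then $v\smile x=\pm u\smile y=0$, and since $i-b\in(0,c-k]$ lies in the injectivity range of $\smile x$, we get $v=0$ and hence $u=0$. This needs no orientability, no duality, and no restriction on $R$.

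The Case~2 discussion ($k>c/2$) is not a proof but an honest description of the obstacle, and this is a genuine gap: you correctly observe that an arbitrary factorization $x=y\smile z$ need not mesh with the given structural decomposition of $x$ into small-degree periodicity inducers, but you supply no argument. This half of the lemma is actually used in this paper --- in the proof of Theorem~\ref{z2anypi1}, Part~1, even-dimensional case, Lemma~\ref{factor} is invoked for $x=(x_2)^{n/2}\in H^n(M;\Z_2)$ with the factorization $x=x_1\smile x_{n-1}$, where $\deg x=n>n/2$ --- so the gap cannot be dismissed as harmless.
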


Combining Proposition 1.3 and Lemma 3.2 from \cite{Kennard} yields the following $\Z_2$-periodicity result.

\begin{theorem}[{$\Z_2$-periodicity Theorem}] \textup{\cite{Kennard}} \label{z2}
Suppose $x\in H^k(M;\Z_2)$ and $y\in H^l(M;\Z_2)$ are non-trivial and induce periodicity in $H^*(M;\Z_2)$ up to degree $c$ with $c\geq 2k$. If $y$ has minimal degree among all such elements, then $l$ is a power of $2$ and $l$ divides $k$. 
\end{theorem}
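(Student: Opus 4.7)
The plan is to exploit the multiplicative structure on $H^*(M;\Z_2)$ together with the action of the mod-$2$ Steenrod algebra, using the minimality of $l=\deg y$ through Lemma \ref{factor} as the key lever. I would treat the two conclusions (divisibility $l\mid k$ and $l$ being a power of $2$) separately.

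For divisibility, note that since $x$ itself induces periodicity, minimality gives $l\le k$; write $k=ql+r$ with $0\le r<l$ and aim to rule out $r>0$. The element $y^{q+1}$ lives in degree $(q+1)l=k+(l-r)$, and because $c\ge 2k$ together with $r>0$ forces $0\le l-r<c-k$, the periodicity of $x$ makes $\smile x\colon H^{l-r}(M;\Z_2)\to H^{(q+1)l}(M;\Z_2)$ surjective. Choose $w$ with $x\smile w=y^{q+1}$. Iterating the periodicity isomorphism $\smile y$ starting from $1\in H^0$ shows that $y^{q+1}\ne 0$ inside the periodicity range, so $w\ne 0$. As a product of the periodicity-inducer $y$, the class $y^{q+1}$ itself induces periodicity, so Lemma \ref{factor} applied to $x\smile w=y^{q+1}$ produces a nontrivial periodicity-inducer $w$ of degree $l-r<l$, contradicting minimality. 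Hence $r=0$.

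For the power-of-$2$ conclusion, suppose, toward a contradiction, that $l$ is not a power of $2$. A classical property of the mod-$2$ Steenrod algebra is that $Sq^l$ is indecomposable precisely when $l$ is a power of $2$; thus we can write $Sq^l=\sum_i Sq^{a_i}Sq^{b_i}$ with $0<a_i,b_i<l$. Apply this to $y$ and use the unstable identity $Sq^l(y)=y\smile y$ to obtain
\[
y\smile y \;=\; \sum_i Sq^{a_i}\bigl(Sq^{b_i}(y)\bigr),
\]
where each intermediate class $Sq^{b_i}(y)$ lies in $H^{l+b_i}(M;\Z_2)$ with $b_i<l$. Surjectivity of the periodicity map $\smile y\colon H^{b_i}\to H^{l+b_i}$ lets one write $Sq^{b_i}(y)=y\smile w_i$ for some $w_i\in H^{b_i}(M;\Z_2)$, and expanding the outer Steenrod squares by the Cartan formula reassembles the right-hand side as $y\smile(\text{stuff})$ plus lower-order terms. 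Tracking the resulting factorization of $y^2$ via Lemma \ref{factor} forces the existence of a nontrivial periodicity-inducing class in some degree strictly between $0$ and $l$, again contradicting minimality.

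The power-of-$2$ step is the genuine obstacle: divisibility is a short cup-product surjectivity argument, whereas the power-of-$2$ conclusion rests on the internal structure of the Steenrod algebra (and this is precisely why the statement is specific to $\Z_2$ coefficients, without an obvious analogue at odd primes). The delicate point is keeping careful track, through Cartan-formula expansion, of \emph{which} class emerging from the Steenrod decomposition is actually nontrivial and itself induces periodicity in the sense of Definition \ref{k-periodic}; this matches the structure of the proof in \cite{Kennard,N}, from which the theorem is assembled.
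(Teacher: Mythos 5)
The paper does not reprove this result; it cites it as a combination of Proposition~1.3 and Lemma~3.2 of \cite{Kennard}, so the comparison is against Kennard's argument. Your first half (divisibility) is a correct, self-contained cup-product argument: with $k=ql+r$ and $0<r<l$, the element $y^{q+1}$ is nonzero (iterating the injectivity of $\smile y$ up to degree $(q+1)l<c$), induces periodicity as a product of copies of $y$ with $l\le c/2$, and factoring it as $x\smile w$ with $w\in H^{l-r}$ via the surjectivity of $\smile x$ hands the contradiction to Lemma~\ref{factor}. That part is fine, and is essentially the standard cup-product step.

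The power-of-$2$ half is where the gap is, and it is a genuine one. After decomposing $Sq^{l}$, writing $Sq^{b_i}(y)=y\smile w_i$ via periodicity, and expanding $Sq^{a_i}(y\smile w_i)$ by the Cartan formula, every term $Sq^{p}(y)\smile Sq^{a_i-p}(w_i)$ with $0<p<l$ can again be rewritten (via surjectivity of $\smile y\colon H^{p}\to H^{l+p}$) with a factor of $y$ in front. So the whole right-hand side collapses to $y\smile z$ for a single class $z\in H^{l}$. But then injectivity of $\smile y\colon H^{l}\to H^{2l}$ (which holds since $0<l\le c-l$) forces $z=y$, and the identity $y^{2}=y\smile y$ is a tautology, not a contradiction. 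You never produce a periodicity-inducing class in a degree strictly between $0$ and $l$; the classes $w_i$ and $u_p$ that appear along the way are not known to induce periodicity, because the elements $Sq^{b_i}(y)$ and $Sq^{p}(y)$ to which you would apply Lemma~\ref{factor} are not known to be periodicity-inducing. The actual proof in \cite{Kennard} does not apply the Steenrod decomposition to the minimal element $y$ in this way. Instead it takes $2^{a}$ to be the largest power of $2$ with $2^{a}\le k$, writes $Sq^{2^{a}}(x)=x\smile y'$ with $y'\in H^{2^{a}}$ using periodicity, and then proves by a delicate Adem-relation/Lucas-theorem argument both that $y'\ne 0$ when $k$ is not a power of $2$ and, separately, that $y'$ itself induces periodicity up to degree $c$; iterating this reduction yields the power-of-$2$ statement, with divisibility then following as in your first half. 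The nontrivial content, which your sketch asserts but does not supply, is precisely the proof that $y'$ induces periodicity.
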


Throughout this article we say a manifold $M$ {\it is an $R$-cohomology $X$} for a given topological space $X$ or {\it has the $R$-cohomology ring of $X$}, if $M$ and $X$ have isomorphic  cohomology rings, that is  $H^*(M;R) \cong H^*(X;R)$.

In \cite{KKSS},   they give a list of the possible cohomology rings that correspond to closed, simply connected, four-periodic, Poincar\'e duality topological spaces. 
Besides the CROSSes and $S^3\times \HP^m$, the only other two possibilities are defined as follows. 
\begin{definition}\cite{KKSS}\label{en} Define  $E_\ell^{4m+2}$ and $N_j^{n}$, $\ell \ge 0, j \ge 2, j, \ell \in \Z$, and $n\equiv 2, 3 \mod 4$, to be closed, simply connected manifolds whose four-periodic integral cohomology ring is given  by 

$$\begin{aligned}
    H^*(E_\ell^{4m+2}; \Z) &\cong \Z[x, y]/((y^2-{\ell}x), x^{m+1},y^{2(m+1)}),\,\deg(y)=2, \deg(x)=4.\\
H^i(N_j^{2+4m};\Z)&\cong \left\{\begin{array}{ll} 0&\text{for } i\equiv 1,2\pmod{4}\\ 
    \Z_j&\text{for } i\equiv 0, 3\pmod 4\end{array}\right., \\
H^i(N_j^{3+4m};\Z)&\cong \left\{\begin{array}{ll} 0&\text{for } i\equiv 1,2,3\pmod{4}\\ 
    \Z_j&\text{for } i\equiv 0\pmod 4\end{array}\right.
\end{aligned}$$
 \end{definition}
\begin{remark}   $E_0$ is an integral cohomology $S^2 \times \HP^m$ and  $E_1$ is an integral cohomology $\CP^{2m+1}$. It is however, unknown whether $E_{\ell}$ exists for $\ell\geq 2$, nor is it known whether the $N^{3+4m}_j$ exist. We show in Lemma \ref{4z}, that $N^{2+4m}_j$ cannot occur.
\end{remark}

For closed, simply connected, four-periodic $n$-manifolds the isomorphism classes of their $\Z_2$-cohomology rings  can  be described completely.

\begin{proposition} \label{4z2} 
If $M^n$ is a simply connected, closed $n$-manifold such that $H^*(M;\Z_2)$ is four-periodic with $n\geq 8$, then $M$ has the $\Z_2$-cohomology ring of $S^n$, $\CP^{n/2}$, $\HP^{n/4}$, $S^2\times\HP^{\frac{n-2}4}$, or $S^3\times\HP^{\frac{n-2}4}$.
\end{proposition}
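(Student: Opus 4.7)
The plan is to apply Theorem \ref{z2} to the minimal-degree non-trivial periodicity inducer in $H^*(M;\Z_2)$. If no non-trivial element induces periodicity (so the trivial class $0\in H^4$ vacuously satisfies Definition \ref{k-periodic}), then the surjectivity and injectivity clauses force $H^i(M;\Z_2)=0$ for all $0<i<n$ (using $n\geq 8$), placing $M$ in the $S^n$ case. Otherwise, let $l$ be the minimal degree of a non-trivial periodicity inducer; Theorem \ref{z2} gives $l\in\{1,2,4\}$, and $H^1(M;\Z_2)=0$ (by simple connectivity) rules out $l=1$. For $l=2$, the degree-$2$ generator exhausts the even-degree cohomology and odd-degree cohomology vanishes, so Poincar\'e duality applied to $H^n$ forces $n$ even and yields the $\CP^{n/2}$ ring.

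The substantive case is $l=4$. Let $x\in H^4$ be the periodicity inducer and put $a=\dim_{\Z_2}H^2(M;\Z_2)$, $b=\dim_{\Z_2}H^3(M;\Z_2)$. Lemma \ref{factor} combined with minimality of $l$ forces every $y\in H^2$ to satisfy $y^2=0$, and more generally forces any cup-product of two positive-degree classes below degree $4$ landing in $\Z_2\cdot x$ to vanish (else those classes would themselves induce periodicity). I then split on $n\bmod 4$. When $n\equiv 1\pmod 4$, the periodicity chain identifies $H^{n-1}\cong H^0=\Z_2$ while Poincar\'e duality demands $H^{n-1}\cong H^1=0$, excluding this case. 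When $n\equiv 0\pmod 4$, Poincar\'e duality together with periodicity gives $b=0$ via $H^{n-1}\cong H^3\cong H^1=0$; the vanishing of all products $y_iy_j\in\Z_2\cdot x$ then makes the Poincar\'e pairing $H^2\otimes H^{n-2}\to H^n$ identically zero, forcing $a=0$ and recovering $\HP^{n/4}$. When $n\equiv 3\pmod 4$, duality forces $a=0$ (so $H^6\cong H^2=0$) and non-degeneracy of $H^3\otimes H^{n-3}\to H^n$ forces $b=1$ with $z^2=0$, yielding $S^3\times\HP^{(n-3)/4}$.

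For the remaining case $n\equiv 2\pmod 4$, Poincar\'e duality pins down $a=1$ with $y^2=0$. For $b$, the Adem relation $Sq^3=Sq^1 Sq^2$ together with the vanishing $H^5\cong H^1=0$ gives $Sq^2 z=0$, hence $z^2=Sq^3 z=Sq^1 Sq^2 z=0$, for every $z\in H^3$; non-degeneracy of the Poincar\'e pairing $H^3\otimes H^{n-3}\to H^n$ then immediately rules out $b=1$ (the self-pairing would vanish). The case $b\geq 2$ would realize the $\Z_2$-cohomology of an $N^{2+4m}_j$-type ring (Definition \ref{en}) with $j$ even, which is precluded by Lemma \ref{4z} of the paper; hence $b=0$ and $M$ has the ring of $S^2\times\HP^{(n-2)/4}$. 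The principal technical obstacle is this final exclusion: the Steenrod-square and Poincar\'e-pairing considerations alone do not kill $b\geq 2$, and the classification relies crucially on Lemma \ref{4z} to rule out the $N^{2+4m}_j$-type rings.
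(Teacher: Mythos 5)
Your case decomposition (trivial inducer, then minimal degree $l\in\{1,2,4\}$ by Theorem~\ref{z2}, then split $l=4$ by $n\bmod 4$) matches the paper's approach, and your treatment of the cases $n\equiv 0,1,3\pmod 4$ is sound. The problem is in the last case, $n\equiv 2\pmod 4$, and it is a genuine gap.

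You correctly observe that $\mathrm{Sq}^2 z=0$ for $z\in H^3$ (since $H^5\cong H^1=0$), so $z^2=\mathrm{Sq}^3z=\mathrm{Sq}^1\mathrm{Sq}^2z=0$, which rules out $b=1$ by non-degeneracy of the Poincar\'e pairing $H^3\otimes H^{n-3}\to H^n$ (after identifying $H^{n-3}$ with $H^3$ via the periodicity iso $\smile x^{(n-6)/4}$). But to eliminate $b\geq 2$ you appeal to Lemma~\ref{4z}, and this fails on two counts. First, it is circular: the paper proves Lemma~\ref{4z} \emph{using} Proposition~\ref{4z2} -- specifically, the argument that $j$ must be odd passes from $4$-periodicity of $H^*(M;\Z)$ to $4$-periodicity of $H^*(M;\Z_2)$ and then invokes "the conclusion of Proposition~\ref{4z2}" to get $H^3(M;\Z_2)=0$. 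So you cannot use Lemma~\ref{4z} inside the proof of Proposition~\ref{4z2}. Second, even ignoring the circularity, Lemma~\ref{4z} has $H^*(M;\Z)$ four-periodic as its hypothesis; in the setting of Proposition~\ref{4z2} you know only that the $\Z_2$-cohomology is four-periodic, and the integral cohomology of $M$ is neither determined nor assumed to be periodic, so the lemma simply does not apply.

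The paper closes this case with a direct Steenrod-algebra contradiction that handles \emph{all} $b\geq 1$ at once, not just $b=1$: given nontrivial $z\in H^3$ and a generator $w\in H^2$, Poincar\'e duality plus four-periodicity produce $y\in H^3$ with $y\smile z=x\smile w$. Applying $\mathrm{Sq}^4$ and expanding with the Cartan formula, one uses $\mathrm{Sq}^2|_{H^3}=0$, $w^2=0$ (from Lemma~\ref{factor} and minimality of $x$), the Adem relation $\mathrm{Sq}^3=\mathrm{Sq}^1\mathrm{Sq}^2$, and a Bockstein computation to get $\mathrm{Sq}^4(y\smile z)=0$ while $\mathrm{Sq}^4(x\smile w)=x^2\smile w\neq 0$. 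That contradiction kills $H^3$ entirely, forcing $b=0$ and the $S^2\times\HP^{(n-2)/4}$ ring, without any reference to Lemma~\ref{4z}. You should replace your appeal to Lemma~\ref{4z} with an argument of this type.
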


\begin{proof} If $H^*(M;\Z_2)$ has periodicity of degree $k<4$, then $k\mid 4$ by Theorem \ref{z2}, and so $H^*(M;\Z_2)$ is $1$- or $2$- periodic. If $H^*(M;\Z_2)$ is $1$-periodic then $M$ has the $\Z_2$-cohomology ring of $S^n$. If $H^*(M;\Z_2)$ is $2$-periodic and the periodicity-inducing element is non-trivial, then $M$ has the $\Z_2$-cohomology ring of
$\CP^{n/2}$.  

Suppose that $4$ is the minimal degree of periodicity in $H^*(M;\Z_2)$. If the periodicity-inducing element $x\in H^4(M;\Z_2)$ is trivial, then $M$ has the $\Z_2$-cohomology of $S^n$, so we assume $x$ is non-trivial.
    If $n\equiv 0,1\pmod 4$, 
    it follows from 4-periodicity, simple-connectivity, Poincar\'e duality, and Lemma \ref{factor} 
    that $M$ has the $\Z_2$-cohomology ring of $S^n$, $\CP^{n/2}$, or $\HP^{n/4}$. 
    We break the remainder of the proof into two cases: Case 1, where  $n\equiv 3\pmod 4$ and Case 2, where $n\equiv 2\pmod 4$.

     \noindent{\bf Case 1, where ${\bf n\equiv 3\pmod 4}$:} We claim that $H^*(M;\Z_2)\cong H^*(S^3\times\HP^{\frac{n-3}4};\Z_2)$. To prove the claim, we note that simple-connectivity, Poincar\'e duality, and four-periodicity give us that 
    
    \[H^i(M;\Z_2)\cong \begin{cases} 0 \hspace{5mm}\text{ for } i\equiv 1,2\pmod{4} \\
    \Z_2 \hspace{2.8mm}\text{ for } i\equiv 0,3\pmod 4 \\
    \end{cases}.\]
Since $H^6(M;\Z_2)\cong 0$, if $y$ generates $H^3(M;\Z_2)$ then $y^2=0$. If we denote by $x$ the generator of $H^4(M;\Z_2)$, we have by four-periodicity that
        \[H^*(M;\Z_2)\cong \Z_2[x,y]/(y^2,x^{\frac{n+1}4})\cong H^*(S^3\times\HP^{\frac{n-3}4};\Z_2).\]

\noindent{\bf Case 2, where ${\bf n\equiv 2\pmod 4}$:} We claim that $H^*(M;\Z_2)$ is isomorphic to $ H^*(S^2\times\HP^{\frac{n-3}4};\Z_2)$. By simple-connectivity, Poincar\'e duality, and four-periodicity, we have that 
\begin{equation}\label{e1}
H^i(M;\Z_2)\cong \begin{cases} 0 \hspace{5mm}\text{ for } i\equiv 1\pmod{4} \\
    \Z_2 \hspace{2.8mm}\text{ for } i\equiv 0,2\pmod 4 \\
    \Z^l_2 \hspace{2.8mm}\text{ for } i\equiv 3\pmod 4
    \end{cases}
    \end{equation}
    for some $l\geq 0$. To prove our claim, we first show that $l=0$. To do this, we expand on ideas from Section 6 of \cite{Kennard} utilizing the      Steenrod algebra and argue by contradiction. 
    Suppose then that $l>0$ and let $w\in H^2(M;\Z_2)$ be a generator. Since $H^3(M;\Z_2)$ is non-trivial, we can choose a non-trivial $z\in H^3(M;\Z_2)$. Using Poincar\'e duality and four-periodicity, there is a $y\in H^3(M;\Z_2)$, so that $$y\smile z=x\smile w.$$ We can then use the fact that $x \in H^4(M;\Z_2)$ is the minimal periodicity-inducing element, together with the Cartan formula, the Steenrod axioms, the Adem relation $\mathrm{Sq}^3=\mathrm{Sq}^1 \mathrm{Sq}^2$, and the Bockstein homomorphism associated to $0 \to \Z_2\to\Z_4\to\Z_2 \to 0$, to prove that $\mathrm{Sq}^4(y\smile z)=0$ and $\mathrm{Sq}^4(x\smile w)=x^2\smile w\neq 0$, a contradiction. Thus, 
 \begin{equation}\label{e}H^i(M;\Z_2)\cong \begin{cases} 0 \hspace{5mm}\text{ for } i\equiv 1,3\pmod{4} \\
    \Z_2 \hspace{2.8mm}\text{ for } i\equiv 0,2\pmod 4 \\
    \end{cases}.
    \end{equation}
There are now just two possibilities for $w^2$: either $w^2=x$ or $w^2=0$. By Lemma \ref{factor} and the assumption that $x \in H^4(M;\Z_2)$ is the periodicity-inducing element of minimal degree, the first case does not occur.
In the second case we obtain $H^*(M;\Z_2)\cong H^*(S^2\times\HP^{\frac{n-3}4};\Z_2)$. 
\end{proof}

With this result in hand, we can now classify those integral cohomology rings of closed, simply connected, $n$-manifolds that are four-periodic when $n\geq 8$ in the next lemma.

 \begin{lemma} \label{4z} 
If $M^n$ is a simply connected, closed $n$-manifold such that $H^*(M;\Z)$ is four-periodic with $n\geq 8$, then $M$ has the integral cohomology ring of $S^n$, $\CP^{n/2}$, $\HP^{n/4}$, $S^3\times\HP^{\frac{n-2}4}$, $N_j^{n}$, $n\equiv 3 \pmod 4$ or $E_\ell^{n}$, $\ell\geq 2$, $N_j^n$, $n\equiv 2 \pmod 4$,  and $j$ odd.
\end{lemma}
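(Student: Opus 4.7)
The plan is to reduce modulo $2$, apply Proposition~\ref{4z2} to classify $H^*(M;\Z_2)$ as one of the five listed rings, and then lift each case to an integral classification using the Universal Coefficient Theorem, integral $4$-periodicity, and Poincar\'e duality. Let $x \in H^4(M;\Z)$ denote the integral periodicity-inducing element; since $n \geq 8$, the map $\smile x \colon H^i(M;\Z) \to H^{i+4}(M;\Z)$ is surjective at $i = 0$, injective at $i = n-4$, and an isomorphism for $0 < i < n-4$. Consequently every $H^{4k}(M;\Z)$ is cyclic, and for $j \in \{1,2,3\}$ the groups $H^{4k+j}(M;\Z)$ are all isomorphic to $H^j(M;\Z)$ throughout the middle range. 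Throughout I use UCT in the form $H^i(M;\Z_2) \cong H^i(M;\Z) \otimes \Z_2 \oplus \mathrm{Tor}(H^{i+1}(M;\Z),\Z_2)$, so that vanishing of a mod-$2$ group forces the adjacent integer groups to be free of $2$-torsion and odd torsion, respectively.

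When $H^*(M;\Z_2) \cong H^*(S^n;\Z_2)$, UCT forces $H^i(M;\Z)$ to be odd torsion for $0 < i < n$. Freeness of $H^2(M;\Z)$ then gives $H^2 = 0$, which via periodicity propagates to $H^{4k+2} = 0$ in the middle range; a parallel argument gives $H^{4k+1} = 0$. The remaining groups $H^{4k}$ and $H^{4k+3}$ are cyclic of a common odd order $m$, and comparing boundary values via Poincar\'e duality yields either $M$ is a $\Z$-cohomology sphere (forcing $x = 0$, when $n \equiv 0, 1 \pmod 4$) or $H^*(M;\Z)$ matches that of $N_j^n$ with $j = m$ odd (when $n \equiv 2, 3 \pmod 4$). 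The mod-$2$ cases $H^*(M;\Z_2) \cong H^*(\HP^{n/4};\Z_2)$ and $H^*(S^3 \times \HP^{\frac{n-3}{4}};\Z_2)$ are handled similarly: UCT forces $H^4 = \Z$ generated by $x$; in the $\HP$ case the remaining low-degree integer groups vanish by UCT and Poincar\'e duality; in the $S^3 \times \HP$ case one deduces $H^3 = \Z$ (with no torsion, via Poincar\'e duality applied to $H_{n-3}$) and $y^2 = 0$ for a generator $y \in H^3$ using graded-commutativity together with the absence of $2$-torsion in $H^6$.

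Finally, when $H^*(M;\Z_2) \cong H^*(\CP^{n/2};\Z_2)$ or $H^*(M;\Z_2) \cong H^*(S^2 \times \HP^{\frac{n-2}{4}};\Z_2)$, UCT forces $H^2 = \Z$ generated by a class $w$ and $H^4 = \Z$ generated by $x$, with all other low-degree integer groups vanishing. Writing $w^2 = \ell x$ for an integer $\ell$, the parity of $\ell$ is read off from $\bar w^2 \in H^4(M;\Z_2)$: odd in the $\CP^{n/2}$ case (where $\bar w^2 = \bar x \neq 0$) and even in the $S^2 \times \HP$ case (where $\bar w^2 = 0$). Propagating via $4$-periodicity gives $H^*(M;\Z) \cong H^*(E_\ell^n;\Z)$, with $\ell = 0$ and $\ell = 1$ recovering $S^2 \times \HP^{\frac{n-2}{4}}$ and $\CP^{n/2}$, and $\ell \geq 2$ giving the remaining $E_\ell^n$. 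The main obstacle throughout is torsion bookkeeping: odd torsion in the integer cohomology is invisible to the $\Z_2$-classification, so pinning down this torsion as cyclic of a single odd order (rather than a more complicated abelian group) requires carefully combining the Poincar\'e duality identification $H^i \cong \mathrm{free}(H^{n-i}) \oplus \mathrm{tor}(H^{n-i+1})$ with $4$-periodicity and the cyclicity of $H^{4k}$ inherited from the surjection $H^0 = \Z \twoheadrightarrow H^4$.
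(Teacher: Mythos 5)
Your approach — reduce mod $2$, invoke Proposition \ref{4z2}, then lift to integral cohomology via UCT, Poincar\'e duality, and integral $4$-periodicity — is a genuinely different route from the paper, which works directly with the integral structure (using simple-connectivity, PD, and $4$-periodicity) and only invokes mod-$2$ information at the end, to force $j$ odd when $n \equiv 2 \pmod 4$. The reduction step itself is justified (the paper observes within this very proof that integral $k$-periodicity implies $\Z_p$ $k$-periodicity for $2k \leq n$), and your handling of the mod-$2$ types $S^n$, $\HP^{n/4}$, $\CP^{n/2}$, and $S^2 \times \HP^{(n-2)/4}$ can be made to work with the torsion bookkeeping you describe.

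However, there is a genuine gap in the mod-$2$ $S^3 \times \HP^{(n-3)/4}$ case (so $n \equiv 3 \pmod 4$). You assert that ``UCT forces $H^4 = \Z$'' and deduce ``$H^3 = \Z$ via Poincar\'e duality applied to $H_{n-3}$,'' but neither claim holds. From $H^4(M;\Z_2) \cong \Z_2$ and the fact that $H^5$ has no $2$-torsion, UCT only gives $H^4 \otimes \Z_2 \cong \Z_2$, so $H^4$ is a cyclic group that could equally well be $\Z_j$ with $j$ even. In that scenario Poincar\'e duality gives $H^3 \cong H_{n-3} \cong \mathrm{free}(H^{n-3}) \oplus \mathrm{tor}(H^{n-2})$, and since $H^{n-3} \cong H^4 \cong \Z_j$ is pure torsion while $H^{n-2} \cong H^1 = 0$ by periodicity, this forces $H^3 = 0$, not $\Z$. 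Thus your PD step does not produce the outcome you claim; it is in fact consistent with $H^*(M;\Z) \cong H^*(N_j^n;\Z)$ for $j$ even and $n \equiv 3 \pmod 4$ — exactly the case in the lemma's conclusion that your argument silently drops. (The analogous step does work when $n \equiv 2 \pmod 4$ because there $H^2 \cong \Z$ is free and the periodicity isomorphism $\smile x \colon H^2 \to H^6$ forces $x$ to have infinite order; no such constraint exists when $n \equiv 3 \pmod 4$, since $H^2 = 0$.) To repair the proof, split the mod-$2$ $S^3 \times \HP$ case according to whether $H^4 \cong \Z$ or $H^4 \cong \Z_j$ with $j$ even, recovering $S^3 \times \HP^{(n-3)/4}$ and $N_j^n$ respectively.
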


\begin{proof} If $H^*(M;\Z)$ has periodicity of degree $k<4$, then 
$H^*(M;\Z)$ is $1$- or $2$-periodic  by Lemma \ref{factor}. Since $\pi_1(M)$ is trivial, if $H^*(M;\Z)$ is $1$-periodic, then $M$ has the $\Z$-cohomology ring of $S^n$. If $H^*(M;\Z)$ is $2$-periodic and the periodicity-inducing element is non-trivial, then $M$ has the $\Z$-cohomology ring 
$\CP^{n/2}$.  

Suppose now that $4$ is the minimal degree of periodicity in $H^*(M;\Z)$. If the periodicity-inducing element $x\in H^4(M;\Z)$ is trivial, then $M$ has the $\Z$-cohomology of $S^n$, so we assume from now on that $x$ is non-trivial.
We break the rest of the proof into cases according to the parity of the dimension of $M$ $\pmod 4$.

\noindent {\bf Case 1, where ${\bf n\equiv 0,1\pmod 4}$}: 
Since $\pi_1(M)=0$, it follows from 4-periodicity,  Poincar\'e duality, and Lemma \ref{factor},
that $M$ has the $\Z$-cohomology ring of $S^n$,  $\CP^{n/2}$, or $\HP^{n/4}$.

 \noindent{\bf Case 2, where ${\bf n\equiv 3\pmod 4}$:} We claim that $H^*(M;\Z)\cong H^*(S^3\times\HP^{\frac{n-3}4};\Z)$ or $H^*(N^n_j; \Z)$. To prove the claim, we note that simple-connectivity, Poincar\'e duality, and four-periodicity give us that for $0<i<n$, either 
 $$H^i(M;\Z)\cong \left\{\begin{array}{ll} 0&\text{ for } i\equiv 1,2\pmod{4}\\ 
    \Z &\text{ for } i\equiv 0,3\pmod 4\end{array}\right.$$
   or
$$\hspace{.5cm} H^i(M;\Z)\cong \left\{\begin{array}{ll} 0&\text{for } i\equiv 1,2,3\pmod{4}\\ 
    \Z_j&\text{for } i\equiv 0\pmod 4\end{array}\right.,$$
depending on whether $H^4(M;\Z)\cong \Z$ or $\Z_j$, respectively.

If $H^i(M; \Z)\cong \Z$ for $i\equiv 0, 3\pmod 4$, we argue as follows. Since $H^6(M;\Z)\cong 0$, if $y$ generates $H^3(M;\Z)$ then $y^2=0$. If we denote by $x$ the generator of $H^4(M;\Z)$, we have by four-periodicity that
$$H^*(M;\Z)\cong \Z[x,y]/(y^2,x^{\frac{n+1}4})\cong H^*(S^3\times\HP^{\frac{n-3}4};\Z).$$

If $H^i(M; \Z)\cong \Z_j$ for $i\equiv 0 \pmod 4$, we denote by $x$ the generator of $H^4(M;\Z)$ and see that we have 
\[H^*(M;\Z)\cong H^*(N_j^n;\Z).\]

\noindent{\bf Case 3, where ${\bf n\equiv 2\pmod 4}$:} We claim that $H^*(M;\Z)$ is isomorphic to the integral cohomology ring of $N^n_j$ or $E_{\ell}^n$, recalling that $H^*(E_0; \Z)\cong H^*(\CP^{n/2}; \Z)$ and $H^*(E_1; \Z)\cong H^*(S^2\times \HP^{\frac{n-2}{4}})$. Since $\pi_1(M)=0$, Poincar\'e duality and  four-periodicity give us 

\begin{equation}\label{e2}
H^i(M;\Z)\cong \begin{cases} 0 \hspace{5mm}\text{ for } i\equiv 1, 3\pmod{4} \\
   \Z  \hspace{4.2mm}\text{ for } i\equiv 0, 2\pmod 4 \\
    \end{cases}, \text{ or }
    \end{equation}

 \begin{equation}\label{e4}
H^i(M;\Z)\cong \begin{cases} 0 \hspace{5mm}\text{ for } i\equiv 1, 2\pmod{4} \\
   \Z_j  \hspace{2.8mm}\text{ for } i\equiv 0, 3\pmod 4 \\
    \end{cases}.
    \end{equation}
\noindent In Display \ref{e2} we have $H^*(M;\Z)\cong H^*(E^n_l;\Z)$ and in Display \ref{e4} we have $H^*(M;\Z)\cong H^*(N^n_j;\Z_2)$. In the latter case, we claim that $j$ must be odd. If instead $j$ is even,  $H^3(M;\Z_2)\cong \Z_2^2$ by Poincar\'e duality and the Universal Coefficient Theorem. Using the the universal coefficient theorem, Poincar\'e duality and the Bockstein homomorphism one can show that for $2k \le n$ $k$-periodicity of $H^*(M;Z)$ implies $k$-periodicity of $H^*(M;Z_p)$ for any prime $p$. Thus   $H^*(M;\Z_2)$ is 4-periodic, and so $H^*(M;\Z_2)$ is as in the conclusion of Proposition \ref{4z2}. This implies that $H^3(M;\Z_2)\cong 0$, a contradiction. Hence $j$ must be odd, and the proof is complete.
\end{proof}

\subsection{Group Cohomology} 
In this section we recall some basic results on group cohomology. For more details on this subject, we refer the reader to \cite{Bn}.  

The cohomology of a group $G$ is isomorphic to the cohomology of the classifying space $BG\cong K(G,1)$ of $G$, where the coefficients can be taken in any abelian group.    
For $G=\Z_k$, $B\Z_k$ is the infinite dimensional lens space $L^{\infty}_k$.  The cohomology ring of $L^{\infty}_k$
with $\Z_k$-coefficients can be computed from the cup product structure of $\CP^{\infty}$, see for example \cite{Hat}.  The ring structure depends on the 
parity of $k$ and can be summarized as follows.  
	$$H^*(\Z_k;\Z_k)\cong H^*(L_k^{\infty};\Z_k)\cong\begin{cases}
		   \Z_k[x,y]/(x^2),\, \,\deg(x)=1,\deg(y)=2,y = \beta(x)& k \,\text{odd},\\
 \Z_k[x,y]/(2x^2), \,\deg(x)=1,\deg(y)=2, y = \beta(x) & k \,\text{even},
                   \end{cases}$$
\noindent where $\beta$ is the Bockstein homomorphism.

Using the Universal Coefficient Theorem for $k$ odd and the ring homomorphism, $H^*(L_k^{\infty};\Z_k)\to H^*(L_k^{\infty};\Z_2)$, induced by the projection $\Z_k\to\Z_2$ for $k$ even, we may calculate the cohomology ring of $\Z_k$ with $\Z_2$ coefficients.  The cohomology ring of the quaternion group and the generalized quaternion group have been calculated in \cite{AM}, for example.  We summarize these two  results in the following lemma.  

\begin{lemma} \label{cohomQ} Let $G$ be either $\Z_k$ or the generalized quaternion group, $Q_{2^i}$  of order $2^i$. Then the group cohomology rings are given as follows:
 
\begin{enumerate} 
\item \cite{Hat}  If $G=\Z_k$, we have
$$H^*(\Z_k; \Z_2)\cong\begin{cases}

\Z_2,\,\, & k \,\,\text{odd},\\
   
 \Z_2[x], \,\,\deg(x)=1&k\equiv 2\pmod 4, \\
 \Z_2[x,y]/\(x^2\),\,\,\deg(x)=1,\,\,\deg(y)=2& k\equiv 0\pmod 4.
                   \end{cases}
$$ 
  \item\cite{AM} If $G=Q_{2^i}$, we have 
  $H^*(Q_{2^i};\Z_2)$
  is periodic with period $4$, and 
$$H^*(Q_{2^i};\Z_2)\cong\begin{cases} 
 \Z_2[x,y,e]/\(x^3, y^3,\,x^2y+xy^2\), \,\,\deg(x)=\deg(y)=1,\deg(e) = 4 & i=3;\\
 \Z_2[x,y,e]/\(xy,\,x^3+y^3\), \,\, \deg(x)= \deg(y)=1,\deg(e) = 4& i\geq 4.
       \end{cases} $$
\end{enumerate}
\end{lemma}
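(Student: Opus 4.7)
The plan is to prove the two parts of Lemma~\ref{cohomQ} separately. Part~(1) will be deduced from the description of $H^*(\Z_k;\Z_k)$ stated just above the lemma, via coefficient change and a Bockstein computation; part~(2) is classical and follows from \cite{AM}.

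For part~(1), I split on the parity of $k$. If $k$ is odd, both $\mathrm{Hom}(\Z_k,\Z_2)$ and $\mathrm{Ext}(\Z_k,\Z_2)$ vanish, so the Universal Coefficient Theorem applied to the integral cohomology of $\Z_k$ immediately gives $H^*(\Z_k;\Z_2)\cong\Z_2$. If $k$ is even, the same UCT computation yields $H^n(\Z_k;\Z_2)\cong\Z_2$ in every non-negative degree, so it remains only to identify the ring structure. When $k\equiv 2\pmod 4$, the splitting $\Z_k\cong\Z_2\oplus\Z_{k/2}$ with $k/2$ odd reduces the problem, via the K\"unneth formula and the odd case, to the known computation $H^*(\Z_2;\Z_2)\cong\Z_2[x]$. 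When $k\equiv 0\pmod 4$, I label the unique non-trivial classes $x\in H^1(\Z_k;\Z_2)$ and $y\in H^2(\Z_k;\Z_2)$ and determine $x^2$ via the identity $x^2=\mathrm{Sq}^1(x)=\beta_2(x)$, where $\beta_2$ is the mod-$2$ Bockstein. The change-of-coefficients ring map $H^*(\Z_k;\Z_k)\to H^*(\Z_k;\Z_2)$ identifies $\beta_2(x)$ with the mod-$2$ reduction of the integral Bockstein $\widetilde\beta(x)\in H^2(\Z_k;\Z)\cong\Z_k$, which is $(k/2)$ times a generator. Since $k/2$ is even in this case, the reduction vanishes, forcing $x^2=0$; together with the rank-one structure in each degree, this yields $\Z_2[x,y]/(x^2)$, as claimed.

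For part~(2), the essential input is that $Q_{2^i}$ acts freely on $S^3=\mathrm{Sp}(1)$, producing a period-$4$ projective resolution of $\Z$ over $\Z[Q_{2^i}]$; this accounts for the $4$-periodicity and the class $e\in H^4$. The generators and relations in degrees $\le 3$ are standard computations from this resolution (or from a Lyndon--Hochschild--Serre argument) and are recorded in \cite{AM}, with the dichotomy between $i=3$ and $i\ge 4$ reflecting different extension data in the underlying central extensions. The principal obstacle I anticipate is the case $k\equiv 0\pmod 4$ of part~(1): the additive data alone permit both $x^2=0$ and $x^2=y$, and only the Bockstein computation above rules out the latter. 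Part~(2) is essentially a literature citation once conventions are aligned.
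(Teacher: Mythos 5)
Your proposal is correct, and Part~(1) is argued by a genuinely different route than the paper's. The paper establishes the even cases in one stroke by pushing the coefficient ring homomorphism $H^*(L_k^{\infty};\Z_k)\to H^*(L_k^{\infty};\Z_2)$ (induced by the projection $\Z_k\to\Z_2$) through the known ring structure of $H^*(\Z_k;\Z_k)\cong\Z_k[x,y]/(2x^2)$ that it records just above the lemma; whether $x^2$ survives or dies is then read off from the coefficient of $y$ modulo $2$. You instead split the even case: for $k\equiv 2\pmod 4$ you decompose $\Z_k\cong\Z_2\times\Z_{k/2}$ and apply K\"unneth to $B\Z_2\times B\Z_{k/2}$, while for $k\equiv 0\pmod 4$ you compute $x^2=\mathrm{Sq}^1 x=\rho_2(\widetilde\beta x)=(k/2)y\bmod 2=0$. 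This is more self-contained, since it does not presuppose the $\Z_k$-coefficient ring. Both routes, as written, glide past the verification that $y^j$ and $xy^j$ are nonzero for all $j$ (needed to pin down $\Z_2[x,y]/(x^2)$ once $x^2=0$ is known), but this is a standard fact — e.g.\ via Poincar\'e duality in finite lens spaces, or surjectivity of $\rho_2\colon H^*(\Z_k;\Z)\to H^*(\Z_k;\Z_2)$ applied to the powers of the integral generator — and the paper itself treats the lemma as a summary of known results rather than proving it from scratch. One small slip: where you write the change-of-coefficients map $H^*(\Z_k;\Z_k)\to H^*(\Z_k;\Z_2)$ in the Bockstein step, you mean the mod-$2$ reduction $H^*(\Z_k;\Z)\to H^*(\Z_k;\Z_2)$; the factorization $\beta_2=\rho_2\circ\widetilde\beta$ lives over $\Z$, not over $\Z_k$. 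Part~(2) is a citation to \cite{AM} in both treatments, with your added sketch of the period-$4$ resolution coming from the free $Q_{2^i}$-action on $S^3$; that is consistent with the paper.
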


\noindent In particular, we see that the $\Z_2$-cohomology groups of $Q_{2^i}$, $i\geq 3$, satisfy the following:

        \[H^l(Q_{2^i};\Z_2)\cong\begin{cases} \Z_2\hspace{4mm} \text{ if } l\equiv 0,3\pmod 4\\ \Z_2^2\hspace{4mm} \text{ if } l\equiv 1,2\pmod 4.\end{cases}\]

We recall the $\Z_2$-cohomology ring of the finite dimensional lens spaces in the following lemma.

\begin{lemma}\cite{Hat} \label{cohomlens} Let $L_k^m$ denote a lens space with fundamental group $\Z_k$, $k\geq 2$, and dimension $m$. Then
    $$H^*(L_k^m; \Z_2)\cong\begin{cases}

\Z_2[x]/(x^2),\,\,\deg(x)=m & k \,\,\text{odd},\\
   
 \Z_2[x]/(x^{m+1}), \,\,\deg(x)=1&k\equiv 2\pmod 4, \\
 \Z_2[x,y]/(x^2, y^{\frac{m+1}2}),\,\,\deg(x)=1,\,\,\deg(y)=2& k\equiv 0\pmod 4.
                   \end{cases}
		$$ 
\end{lemma}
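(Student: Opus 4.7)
The plan is to pin down the $\Z_2$ ring structure of $L_k^m$ by combining the known classifying-space cohomology $H^*(\Z_k;\Z_2)$ from Lemma \ref{cohomQ}(1), the transfer homomorphism of Proposition \ref{transfer}, and Poincar\'e duality on the closed manifold $L_k^m$.

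When $k$ is odd, I would apply Proposition \ref{transfer} directly to the $k$-sheeted universal covering $S^m \to L_k^m$; since $\gcd(k,2) = 1$, the pullback $H^*(L_k^m;\Z_2) \hookrightarrow H^*(S^m;\Z_2)$ is injective, forcing $H^i(L_k^m;\Z_2) = 0$ for $0 < i < m$. Since $m$ is odd and $L_k^m$ is orientable, Poincar\'e duality fills in the top class, yielding $\Z_2[x]/(x^2)$ with $\deg(x) = m$.

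For $k$ even, write $k = 2^a j$ with $a \ge 1$ and $j$ odd, and use the $j$-sheeted covering $L_{2^a}^m \to L_k^m$ induced by the subgroup $\Z_{2^a} \le \Z_k$. Proposition \ref{transfer} produces an injection $H^*(L_k^m;\Z_2) \hookrightarrow H^*(L_{2^a}^m;\Z_2)$; a rank count via the Universal Coefficient Theorem applied to the standard cellular chain complex of a lens space (the boundary maps alternate between $\times k$ and $0$) shows both sides have total $\Z_2$-rank $m+1$, so this injection is a ring isomorphism. It then suffices to compute $H^*(L_{2^a}^m;\Z_2)$. I would model $L_{2^a}^m$ as the $m$-skeleton of $B\Z_{2^a} = L_{2^a}^\infty$ in the CW decomposition with one cell per dimension, so that the pullback $H^i(B\Z_{2^a};\Z_2) \to H^i(L_{2^a}^m;\Z_2)$ is an isomorphism for $i < m$. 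When $a = 1$, this is the classical $H^*(\RP^m;\Z_2) = \Z_2[x]/(x^{m+1})$. When $a \ge 2$, Lemma \ref{cohomQ}(1) gives $H^*(B\Z_{2^a};\Z_2) = \Z_2[x,y]/(x^2)$ with $\deg(x) = 1$, $\deg(y) = 2$, and pulling this back together with Poincar\'e duality to supply the top-degree class $x y^{(m-1)/2}$ yields $\Z_2[x,y]/(x^2, y^{(m+1)/2})$.

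The main obstacle is verifying that the pullback from $B\Z_{2^a}$ faithfully reproduces the ring structure of $L_{2^a}^m$ all the way through the top degree and that $y^{(m+1)/2} = 0$ is the sole new relation introduced by truncating at the $m$-skeleton. This should follow from the long exact sequence of the pair $(B\Z_{2^a}, L_{2^a}^m)$ together with straightforward degree accounting (noting $\deg(y^{(m+1)/2}) = m+1 > m$), but deserves careful attention to ensure no secondary cup-product relations sneak in below the top degree.
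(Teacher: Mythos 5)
The paper states this lemma as a citation to Hatcher and does not supply its own proof; the only hint of a strategy is the surrounding text, which computes $H^*(L_k^\infty;\Z_2) \cong H^*(\Z_k;\Z_2)$ via $\CP^\infty$ and then restricts to skeleta. Your argument is correct and is essentially that standard route, with one harmless detour. For $k$ odd, the transfer argument is a clean shortcut: the covering $S^m \to L_k^m$ has odd degree, so $H^i(L_k^m;\Z_2)$ injects into $H^i(S^m;\Z_2)$, giving vanishing in the middle degrees immediately. For $k$ even, you interpose the Sylow $2$-subgroup covering $L_{2^a}^m \to L_k^m$ before doing the skeleton argument; this is logically fine (the $j$-sheeted transfer plus the $\Z_2$-rank count of $m+1$ does force a ring isomorphism), but it is an unnecessary step. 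The mod-$2$ cellular cochain complex of $L_k^m$ already has all coboundaries zero whenever $k$ is even, so the inclusion $L_k^m \hookrightarrow L_k^\infty = B\Z_k$ induces an isomorphism on $H^i(-;\Z_2)$ for all $i \le m$ directly, and one can truncate $H^*(\Z_k;\Z_2)$ from Lemma \ref{cohomQ}(1) without first reducing to $\Z_{2^a}$. Your closing concern about spurious cup-product relations is handled exactly as you suspect: the restriction $H^*(B\Z_k;\Z_2) \to H^*(L_k^m;\Z_2)$ is a surjective ring map whose kernel is the ideal of elements supported in degree $> m$, and in $\Z_2[x,y]/(x^2)$ (respectively $\Z_2[x]$) that ideal is principally generated by $y^{(m+1)/2}$ (respectively $x^{m+1}$), so no additional relations appear below the top degree. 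One minor point you should make explicit is that $L_{2^a}^m$ (for your chosen linear action) really can be realized as the $m$-skeleton of a CW model of $B\Z_{2^a}$ with one cell per dimension; this is standard but worth a sentence, since ``lens space'' encompasses different $q$-parameters and the skeleton identification depends on extending the chosen action to higher dimensions.
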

By Lemma \ref{cohomlens}, the $\Z_2$-cohomology ring of the $L_k^m$ fall into one of three isomorphism types:  that of $S^m$, $\RP^m$, or $S^1\times \CP^{\frac{m-1}{2}}$,  
respectively. We now recall some basic facts about equivariant cohomology, the setting in which we apply group cohomology. 

Let $G$ be a compact Lie group and let $X$ be a $G$-space. The equivariant cohomology of $X$, denoted $H_G^*(X;R)$ is defined as
		\[H^*_G(X;R):=H^*(EG\times_GX;R),\]
where $EG\times_GX$ is the Borel construction, and $EG \longrightarrow BG$ is the universal principal $G$-bundle.
If $G$ acts freely on $X$, then the canonical map $EG\times_GX\to X/G$ is a homotopy equivalence, and so $H^*_G(X;R) \cong H^*(X/G;R)$. We can then consider the Borel fibration, $X\to EG\times_GX\to BG$, and its associated Serre spectral sequence. Let $\F$ be a field and assume that the action of $\pi_1(BG)$ on 
$H^*(X; \F)$ is trivial.  Then 
\[H^p\(BG;H^q(X;\F)\)\cong H^p(BG;\F)\otimes_{\F} H^q(X;\F)\]
by the Universal Coefficient Theorem.  
In this case the Serre spectral sequence has 
$E_2$-page 
		\[E_2^{p,q}\cong H^p(BG;\F)\otimes H^q(X;\F),\]
and it converges to $H^*(EG\times_GX;\F) \cong H^*(X/G;\F)$. This is our main  
 tool for computing the $\Z_2$-cohomology ring of quotients.
\subsection{Finite group actions}

Our goal in what follows is to understand which fundamental groups may appear for closed manifolds whose universal cover has $k$-periodic cohomology, for $k\in \{1, 2, 4\}$. This involves understanding which finite groups may act freely.

Recall that for closed, positively curved manifolds, in even dimensions the fundamental group is either $\Z_2$ or trivial by Synge's theorem and in odd dimensions it is finite by Bonnet-Myer's theorem.
For topological reasons,  when $M$ is an integral cohomology $\CP^{m/2}$, with $m\equiv 0 \pmod 4$, 
it is known that there is no free action by any non-trivial
group, see Example 2.6 in Cusick \cite{Cu}. Combining these facts  with Lemma \ref{4z}, we obtain a complete list of 
 closed manifolds whose universal cover has $k$-periodic integral cohomology ring for $k\in\{1, 2, 4\}$. We summarize this information  in the following proposition.

\begin{proposition} \label{quotients} Let $\widetilde{M}^n$ be a closed, positively curved manifold with $k$-periodic integral cohomology, $k\in \{1, 2, 4\}$. Then 
$M^n$ has the integral cohomology ring of   $S^n/\Gamma$, $(S^3/\Gamma) \times \HP^{\frac{n-3}{4}}$, $N_j^{n}/\Gamma$,  $n\equiv 3 \pmod 4$, $\CP^{\frac{m}{2}}/\Delta$, $(S^2/\Delta)\times \HP^{\frac{n-2}{4}}$, $E^n_{\ell}/\Delta$, $n\equiv 2 \pmod 4$ and $\ell\geq 2$, or $\HP^{\frac{n}{4}}$, where $\Gamma$ is a finite group acting freely on the corresponding manifold  and $\Delta$ is  a subgroup of $\Z_2$.
\end{proposition}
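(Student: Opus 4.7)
The plan is to synthesize Lemma~\ref{4z} (the classification of four-periodic integral cohomology rings) with the curvature constraints on $\Gamma := \pi_1(M^n)$ and the classical topological obstructions to free finite group actions on certain rank-one symmetric spaces. I first apply Lemma~\ref{4z} to $\widetilde{M}^n$; the $k=1$ and $k=2$ subcases are absorbed into the $4$-periodic list via Lemma~\ref{factor}, since any periodicity-inducing element of degree dividing $4$ is itself a product of periodicity-inducing factors. Thus $\widetilde{M}^n$ has the integer cohomology ring of one of $S^n$, $\CP^{n/2}$, $\HP^{n/4}$, $S^3\times\HP^{(n-3)/4}$ (for $n\equiv 3\pmod 4$), $S^2\times\HP^{(n-2)/4}$ or $E^n_\ell$ with $\ell\geq 2$ (for $n\equiv 2\pmod 4$), or $N^n_j$ (for $n\equiv 3\pmod 4$).

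Next, I apply Bonnet--Myers to force $\Gamma$ finite, and Synge's theorem to force $\Gamma\in\{1,\Z_2\}$ whenever $n$ is even; the subgroup $\Delta\leq\Z_2$ in the statement records this. I then invoke Cusick's obstruction (Example~2.6 of \cite{Cu}): no non-trivial finite group acts freely on an integer cohomology $\HP^{n/4}$, nor on an integer cohomology $\CP^{n/2}$ with $n\equiv 0\pmod 4$, both by an odd-Euler-characteristic argument. This pins down $\Gamma$ trivial for the $\HP^{n/4}$ case (so no quotient appears) and forces $n\equiv 2\pmod 4$ when $\Delta\ne 1$ in the $\CP^{n/2}/\Delta$ and $E^n_\ell/\Delta$ cases. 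In the odd-dimensional $S^n$ and $N^n_j$ cases ($n\equiv 3\pmod 4$), $\Gamma$ may be any finite group acting freely, directly yielding the $S^n/\Gamma$ and $N^n_j/\Gamma$ cohomology rings.

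The main obstacle is the product cases, where $\widetilde{M}^n$ has the integer cohomology of $S^k\times\HP^{(n-k)/4}$ for $k\in\{2,3\}$: I must identify $H^*(M;\Z)$ as that of $(S^k/\Gamma)\times\HP^{(n-k)/4}$ rather than some twisted variant. My strategy is to use Cusick's obstruction on the $\HP$-factor itself: since $\HP^{(n-k)/4}$ admits no non-trivial free finite action, the induced $\Gamma$-action on the degree-$4$ generator $y\in H^4(\widetilde M;\Z)$ pulled back from the $\HP$-factor must be trivial (otherwise one would obtain a non-trivial free action on the $\HP$ quotient via the projection onto the $\HP$-part of the cohomology ring). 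I would then run the Cartan--Leray spectral sequence
\[E_2^{p,q}=H^p(\Gamma;H^q(\widetilde M;\Z))\Longrightarrow H^{p+q}(M;\Z),\]
checking it collapses onto the tensor product $H^*(S^k/\Gamma;\Z)\otimes H^*(\HP^{(n-k)/4};\Z)$: the $\HP$-classes survive as $\Gamma$-invariants with no room for transgression (they sit in even degrees bounded by $n-k$, and any target transgression would pair with the $S^k$-generator in degree $k+1$, where the relevant groups vanish for degree-parity reasons), while the ring structure transfers by naturality. This identification is the only non-formal part of the argument; the remaining cases in the statement follow at once from combining Steps~1--2 with the previous paragraph.
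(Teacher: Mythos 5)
Your framework --- apply Lemma~\ref{4z} to $\widetilde{M}$, then constrain $\pi_1(M)$ via Bonnet--Myers, Synge, and Cusick's Euler characteristic obstruction --- is precisely the route the paper takes; the paper offers the proposition as a summary of the preceding remarks and gives no further proof. The interesting part of your proposal is the attempt to actually verify the product identification $H^*(M;\Z)\cong H^*((S^k/\Gamma)\times\HP^{(n-k)/4};\Z)$, which the paper leaves implicit, but this step has a genuine gap. You argue that the $\Gamma$-action on the degree-$4$ class $y\in H^4(\widetilde{M};\Z)$ must be trivial because otherwise ``one would obtain a non-trivial free action on the $\HP$ quotient via the projection onto the $\HP$-part of the cohomology ring.'' This is not a valid deduction: $\widetilde{M}$ is merely a manifold whose \emph{cohomology ring} is that of $S^k\times\HP^m$; it need not carry any product structure, so there is no projection to an $\HP$-factor and no sense in which a ring automorphism of $H^*(\widetilde{M};\Z)$ induces a self-map of a quaternionic projective space. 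Moreover the conclusion you want is not even automatic from the usual fixed-point tools: when $n\equiv 3\pmod 4$ with $m=(n-3)/4$ odd, a deck transformation acting by $-1$ on both $H^3(\widetilde{M};\Z)$ and $H^4(\widetilde{M};\Z)$ is simultaneously consistent with orientation-preservation (which Synge forces in odd dimensions, since $\epsilon_3\epsilon_4^m=1$ holds) and with vanishing Lefschetz number, since $L(g)=(1-\epsilon_3)\sum_{k=0}^{m}\epsilon_4^k$ vanishes when $\epsilon_4=-1$ and $m$ is odd. So the triviality of the cohomology action needs an argument you have not supplied, and without it the Cartan--Leray collapse onto the tensor-product ring does not follow.

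A secondary inaccuracy: you assert that no non-trivial finite group acts freely on an integral cohomology $\HP^{n/4}$ ``by an odd-Euler-characteristic argument.'' But $\chi(\HP^{n/4})=n/4+1$ is odd only when $n\equiv 0\pmod 8$; for $n\equiv 4\pmod 8$ it is even, and the Euler characteristic gives no obstruction to a free $\Z_2$-action (which is the only one Synge leaves open). The paper cites Cusick's Example~2.6 only for $\CP^{n/2}$ with $n\equiv 0\pmod 4$, where the Euler characteristic genuinely is odd; the analogous claim for $\HP^{n/4}$ in the case $n\equiv 4\pmod 8$ requires a different justification.
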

We now wish to identify closed, simply connected spaces with $k$-periodic cohomology, $k\in \{1, 2, 4\}$, that do not admit a free $(\Z_2\times \Z_2)$-action. The first class of such spaces are $\Z_p$-cohomology spheres, as shown in Theorem III.8.1 in \cite{Br}. 
\begin{proposition}\cite{Br}\label{zpzp} Let $p$ be a prime.  Then there is no free action of  $\Z_p\times \Z_p$ on a finitistic $\Z_p$-cohomology $n$-sphere for any $n\geq 1$.
\end{proposition}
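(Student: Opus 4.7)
The plan is to argue by contradiction, using the Serre spectral sequence of the Borel construction as set up in the preliminaries, together with the explicit structure of the cohomology ring of $B(\Z_p \times \Z_p)$. Suppose $G = \Z_p \times \Z_p$ acts freely on a finitistic $\Z_p$-cohomology $n$-sphere $X$. Freeness gives a homotopy equivalence $EG \times_G X \simeq X/G$, and hence $H^*_G(X; \Z_p) \cong H^*(X/G; \Z_p)$. Since $X$ is finitistic and $G$ is finite, $X/G$ is finitistic of finite $\Z_p$-cohomological dimension, so $H^i_G(X; \Z_p) = 0$ for all sufficiently large $i$.

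Next I compute the same equivariant cohomology via the Serre spectral sequence of the Borel fibration $X \to EG \times_G X \to BG$. Because the automorphism group of $\Z_p$ has order coprime to $p$, the $p$-group $G$ must act trivially on $H^*(X; \Z_p)$, so the local coefficient system is simple. The $E_2$-page is then concentrated in the two rows $q = 0, n$, with $E_2^{p, 0} \cong E_2^{p, n} \cong H^p(BG; \Z_p)$, and the only possibly nontrivial differential is $d_{n+1}$, which acts as multiplication by a fixed class $\omega \in H^{n+1}(BG; \Z_p)$. Consequently, $E_\infty$ is the cohomology of the complex
$$\cdots \longrightarrow H^p(BG; \Z_p) \xrightarrow{\;\omega\;} H^{p + n + 1}(BG; \Z_p) \longrightarrow \cdots ,$$
and the vanishing of the abutment in high degrees forces $H^*(BG; \Z_p)/(\omega)$ to be finite-dimensional as a $\Z_p$-vector space.

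The contradiction then comes from the structure of $H^*(BG; \Z_p)$. For $p$ odd this ring is $\Lambda(x_1, x_2) \otimes \Z_p[y_1, y_2]$ with $\deg x_i = 1$ and $\deg y_i = 2$, and for $p = 2$ it is $\Z_2[x_1, x_2]$ with $\deg x_i = 1$; in either case the reduction modulo nilpotents is a polynomial ring in two variables, hence has Krull dimension $2$. If $\omega$ is nilpotent, then multiplication by $\omega$ is eventually zero, which contradicts the eventual surjectivity forced above; otherwise the image of $\omega$ in the polynomial quotient is a nonzero element of positive degree, so the quotient of the polynomial ring by its ideal has Krull dimension at least $1$ and is therefore infinite-dimensional over $\Z_p$. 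Either alternative contradicts the finite-dimensionality established in the previous step, completing the proof.

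The main obstacle in executing this plan is the finitistic bookkeeping at the outset: one needs that a free action of a finite group on a finitistic space produces a quotient of finite $\Z_p$-cohomological dimension. This is a standard but nontrivial result from the Bredon transformation-group literature and is the only place where topology, rather than pure algebra, enters. The remaining rank-$2$ obstruction is exactly what distinguishes $\Z_p \times \Z_p$ from $\Z_p$, the latter of course acting freely on genuine spheres.
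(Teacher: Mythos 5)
The paper does not prove this proposition; it is quoted verbatim as Theorem III.8.1 of Bredon's \emph{Introduction to Compact Transformation Groups}, so there is no internal proof to compare against. Your argument is a correct reconstruction of the standard (and Bredon's) proof: reduce to the Borel fibration $X \to EG \times_G X \to BG$, observe the coefficient system is simple because $\mathrm{Aut}(\Z_p)$ has order prime to $p$, collapse the two-row spectral sequence to a Gysin-type long exact sequence governed by cup product with a transgression class $\omega \in H^{n+1}(BG;\Z_p)$, and then derive a contradiction from the fact that $H^*(B(\Z_p\times\Z_p);\Z_p)$ modulo nilpotents is a polynomial ring in two variables, so the principal ideal $(\omega)$ cannot have finite-codimensional quotient. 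You also correctly flag the one genuinely topological input, namely that a free action of a finite group on a finitistic space with bounded mod-$p$ cohomology yields a quotient whose mod-$p$ cohomology vanishes in high degrees; this is exactly the Bredon finiteness machinery (Chapter III, Sections 7--8) that justifies using the spectral sequence in this generality. One small stylistic point: the dichotomy on whether $\omega$ is nilpotent is not really needed, since $(\omega)$ is always contained in some prime of height at most one by Krull's principal ideal theorem, so the reduced quotient has Krull dimension at least one and is infinite-dimensional regardless; but your version, while slightly more roundabout, is not wrong.
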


We also prove the following lemma about free $(\Z_2\times \Z_2)$-actions
on a $\Z_2$-cohomology $S^3\times \HP^m$.

\begin{lemma}\label{freez2}
 Let $M$ be a closed manifold 
 such that $H^*(\widetilde{M};\Z_2)\cong H^*(S^3\times \HP^m;\Z_2)$ with $m\geq 2$. If $\Z_2\times \Z_2$ acts freely on $\widetilde{M}$, then $\widetilde{M}/(\Z_2\times \Z_2)$ does not have a $1$-, $2$-, or $4$-periodic $\Z_2$-cohomology ring.
\end{lemma}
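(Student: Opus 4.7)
My plan is to pit two cohomology computations against the constraints that periodicity places on $H^*(\widetilde{M}/G;\Z_2)$, where $G=\Z_2\times\Z_2$. Writing $Y=\widetilde{M}/G$: since $\widetilde{M}$ is simply connected and $G$ acts freely, $Y$ is a closed connected $(4m{+}3)$-manifold with $\pi_1(Y)\cong G$, so $H^1(Y;\Z_2)\cong \Hom(G,\Z_2)\cong \Z_2^2$, and in particular $\dim H^1(Y;\Z_2)=2$.

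Next I would compute $\dim H^2(Y;\Z_2)=3$ using the Serre spectral sequence of the Borel fibration $\widetilde{M}\to EG\times_G\widetilde{M}\to BG$, which converges to $H^*(Y;\Z_2)$ since the action is free. Because $H^1(\widetilde{M};\Z_2)=H^2(\widetilde{M};\Z_2)=0$ by the hypothesized cohomology ring, the only non-zero term on the antidiagonal $p+q=2$ of $E_2$ is $E_2^{2,0}\cong H^2(BG;\Z_2)\cong\Z_2^3$. A bidegree check rules out any incoming or outgoing differentials at this spot (incoming sources $E_r^{2-r,r-1}$ vanish because $H^1$ and $H^2$ of $\widetilde{M}$ are zero and $p=2-r<0$ for $r\geq 3$; outgoing targets $E_r^{2+r,1-r}$ vanish because $1-r<0$ for $r\geq 2$), so $E_\infty^{2,0}=\Z_2^3$ and $\dim H^2(Y;\Z_2)=3$.

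Finally, I would rule out each of the three periodicities via Definition~\ref{k-periodic} combined with $\Z_2$-Poincar\'e duality. For $k=1$, any periodicity-inducing $x\in H^1$ makes $\smile x\colon H^0\to H^1$ surjective, forcing $\dim H^1\le 1$ and contradicting $\dim H^1=2$. For $k=2$, $\smile x\colon H^0\to H^2$ must be surjective, forcing $\dim H^2\le 1$ and contradicting $\dim H^2=3$. For $k=4$ (which requires $c=4m+3\ge 8$, i.e.\ $m\ge 2$), iterating the isomorphisms $\smile x\colon H^{4j+2}\xrightarrow{\cong}H^{4j+6}$ for $j=0,\ldots,m-1$ gives $\dim H^2=\dim H^{4m+2}$, while Poincar\'e duality gives $\dim H^{4m+2}=\dim H^1=2$, again contradicting $\dim H^2=3$.

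The key step is the spectral sequence computation; the only mild technicality there is verifying that the local coefficient system given by the $G$-action on $H^*(\widetilde{M};\Z_2)$ does not enter the argument on the $p+q=2$ antidiagonal. This is automatic: only $E_2^{2,0}$ is nonzero in that antidiagonal, and $H^0(\widetilde{M};\Z_2)\cong \Z_2$ carries the trivial $G$-module structure, so the $E_2$-term has the form $H^p(BG;\Z_2)\otimes H^0(\widetilde{M};\Z_2)$ without any twisting to keep track of.
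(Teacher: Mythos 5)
Your proof is correct and takes essentially the same approach as the paper: both use the Serre spectral sequence of the Borel fibration $\widetilde{M}\to EG\times_G\widetilde{M}\to BG$ to pin down the low-degree $\Z_2$-cohomology of the quotient, and then contradict this with the constraint that $k$-periodicity ($k\in\{1,2,4\}$) plus Poincar\'e duality forces $H^1\cong H^2$. The only differences are cosmetic: you read off $\dim H^1(Y;\Z_2)=2$ directly from $\pi_1(Y)\cong\Z_2\times\Z_2$ via universal coefficients while the paper gets it from the same spectral sequence, and you treat the three values of $k$ separately where the paper notes they all feed into the single comparison $\Z_2^2\cong H^1(Y;\Z_2)\cong H^2(Y;\Z_2)\cong\Z_2^3$; neither difference changes the substance of the argument.
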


\begin{proof} Let $\Z_2\times \Z_2$ act freely on $\widetilde{M}$ and let $\widebar{M}=\widetilde{M}/(\Z_2\times\Z_2)$.  Assume that $H^*(\widebar{M};\Z_2)$ is $k$-periodic, $k\in\{1, 2, 4\}$, to obtain a contradiction.  By Lemma \ref{cohomQ} and the K{\"u}nneth Formula we know that
\[H^{j}(\Z_2^2;\Z_2)\cong H^{j}(\RP^{\infty}\times\RP^{\infty};\Z_2)\cong \Z_2^{j+1}\,\textrm{for all} \,j.\] 		
\noindent Consider the Serre spectral sequence associated to the Borel fibration, $\widetilde{M}\to E{\Z_2^2}\times_{\Z_2^2} \widetilde{M}\to B\Z_2^2$. Since $\rk(H^i(\widetilde{M};\Z_2))\leq 1$ for all $i$, we have that $\pi_1(BG)$ acts trivially on $H^*(\widetilde{M};\Z_2)$. Therefore $E_2^{j,0}\cong \Z_2^{j+1}$ for all $j \geq 0$. Note that the $E_l^{r, s}$ are trivial for $r<0$. Since $\widetilde{M}$ is simply connected the $E_l^{0, 1}$ are also trivial. Hence the differentials,
$$d_l:E_l^{1-l,\,l-1}\to E_l^{1,0} \text{ and } d_l:E_l^{2-l,\,l-1}\to E_l^{2,0},$$ 
are both trivial for $l\geq 2$. Therefore $E_2^{1,0}=E_{\infty}^{1,0}$ and $E_2^{2,0}=E_{\infty}^{2,0}$, and hence 
		\[H^1\(\widebar{M};\Z_2\)\cong E_{2}^{1,0}\cong \Z_2^2\text{ and }H^2\(\widebar{M};\Z_2\)\cong E_{2}^{2,0}\cong  \Z_2^3.\] 	
\noindent However, since $H^*(\widebar{M};\Z_2)$ is $k$-periodic with $k\in\{1,2,4\}$ by hypothesis and   $\dim(M)\equiv 3\pmod 4$, by Poincar\'e duality we have 
		\[H^1\(\widebar{M};\Z_2\) \cong  H^2\(\widebar{M};\Z_2\),\]
\noindent a contradiction. 
\end{proof}

 We now recall the following basic fact from group theory, see, for example, Proposition 9.7.3 in Scott \cite{Sc}.

\begin{theorem}\cite{Sc}\label{pgroup}
If $G$ is a finite $p$-group with a unique subgroup of order $p$, 
then $G$ is cyclic or generalized quaternion. 
\end{theorem}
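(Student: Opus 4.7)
The plan is to induct on $|G|$, with the base case $|G| = p$ being immediate. The hypothesis is inherited by subgroups, so by the inductive hypothesis every proper subgroup of $G$ is cyclic, or (when $p = 2$) possibly generalized quaternion. Since $G$ is a $p$-group, the center $Z(G)$ is non-trivial, and as an abelian $p$-group with a unique subgroup of order $p$, the structure theorem for finite abelian $p$-groups forces $Z(G)$ to be cyclic. In particular, if $G$ is abelian, the same argument applied to $G$ itself finishes the proof.

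For the non-abelian case, I would choose a maximal normal abelian subgroup $A \trianglelefteq G$. By the inherited hypothesis, $A$ is cyclic, say $A = \langle a \rangle$ of order $p^n$. A standard maximality argument gives $C_G(A) = A$: otherwise a central element of $C_G(A)/A$ of order $p$ would lift to an element enlarging $A$ to a bigger normal abelian subgroup. Hence conjugation embeds $G/A$ into $\operatorname{Aut}(A) \cong (\Z/p^n)^\times$.

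When $p$ is odd, $\operatorname{Aut}(\Z/p^n)$ is cyclic of order $p^{n-1}(p-1)$, and the only $p$-subgroup acting faithfully is trivial once one exploits the unique-subgroup-of-order-$p$ condition on $\langle A, b\rangle$ for any prospective $b \in G \setminus A$; this forces $G = A$, so $G$ is cyclic.

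The main obstacle is the case $p = 2$, where $\operatorname{Aut}(\Z/2^n) \cong \Z_2 \times \Z_{2^{n-2}}$ is not cyclic for $n \geq 3$, and several non-trivial $2$-subgroups of automorphisms are available. Here one analyzes an element $b \in G \setminus A$ of minimal order, with $b a b^{-1} = a^s$ for $s \in \{-1,\ 1 + 2^{n-1},\ 2^{n-1} - 1\}$, and $b^2 \in A$. Each of the choices $s = 1 + 2^{n-1}$ (modular maximal-cyclic), $s = 2^{n-1}-1$ (semi-dihedral), and $s = -1$ with $b^2 = 1$ (dihedral) produces a second element of order $2$ distinct from the unique involution $a^{2^{n-1}}$ of $A$, contradicting the hypothesis. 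The only surviving possibility is $s = -1$ with $b^2 = a^{2^{n-1}}$, giving the presentation $\langle a, b \mid a^{2^n} = 1,\ b^2 = a^{2^{n-1}},\ bab^{-1} = a^{-1} \rangle$ of $Q_{2^{n+1}}$. A final step is to verify that no strict extension beyond $\langle a, b \rangle$ can occur, i.e.\ $[G:A] = 2$, which follows because $G/A$ must itself be a $2$-group with unique involution, hence cyclic by induction, and any cyclic extension of order $\geq 4$ would again introduce a second involution.
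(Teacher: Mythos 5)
The paper states this result by citing Proposition 9.7.3 of Scott and gives no proof, so there is no in-text argument to compare against. What you propose is the classical textbook proof (as in Gorenstein or Huppert): induction, a maximal normal abelian subgroup $A$ which is its own centralizer, the embedding $G/A\hookrightarrow\mathrm{Aut}(A)$, and casework by parity and by the involution $s$. This is essentially correct, but two steps are stated imprecisely. First, the reason $[G:A]=2$ in the $p=2$ case is not that a cyclic extension of order $\geq 4$ would introduce a second involution; rather, no such extension exists. Your own casework shows that every involution of $G/A$ must act on $A$ by inversion, and since $C_G(A)=A$ makes the map $G/A\to\mathrm{Aut}(A)$ injective, $G/A$ has a unique involution and, being abelian, is cyclic. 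If it had order $2^m\geq 4$ with generator $\bar c$ acting by $a\mapsto a^r$, the unique involution $\bar c^{\,2^{m-1}}$ would force $r^{2^{m-1}}\equiv -1\pmod{2^n}$, which is impossible because $-1$ is not a quadratic residue modulo $2^n$ for $n\geq 3$ (the cases $n\leq 2$ are handled by inspection). Second, the $p$ odd step you elide needs a short computation: with $bab^{-1}=a^{1+p^{n-1}}$ and $b^p=a^t$, one finds $(a^jb)^p=a^{jp+t}$, so either $p\mid t$ and a suitable $j$ makes $a^jb$ an element of order $p$ outside $A$ (a second subgroup of order $p$), or $p\nmid t$ and $\langle A,b\rangle$ is cyclic of order $p^{n+1}$, contradicting that $b$ does not centralize $a$. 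With these details supplied your argument is complete and gives a correct proof of the theorem.
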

\noindent Theorem XII.11.6 in Cartan and Eilenberg \cite{CE} gives us the following characterization of a finite group with periodic cohomology.
\begin{theorem}\textup{\cite{CE}}\label{syl}  A finite group, $G$, has periodic cohomology with strictly positive period if and only if the Sylow $p$-subgroups, $Syl_p(G)$, of $G$ are cyclic or perhaps (if $p = 2$) generalized quaternion.
\end{theorem}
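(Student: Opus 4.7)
The plan is to reduce to the case of $p$-groups via a Sylow argument, then invoke Theorem \ref{pgroup} together with a cohomological obstruction coming from elementary abelian subgroups.

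First I would prove the Sylow reduction: a finite group $G$ has periodic integer cohomology if and only if every Sylow $p$-subgroup $P \leq G$ does. The standard tool is the restriction-transfer composition
\[
H^*(G;\Z) \xrightarrow{\mathrm{res}} H^*(P;\Z) \xrightarrow{\mathrm{tr}} H^*(G;\Z),
\]
which equals multiplication by the index $[G:P]$. Since this index is coprime to $p$, restriction is injective on $p$-primary parts. Because $H^n(G;\Z)$ is finite for $n>0$, it splits as $\bigoplus_p H^n(G;\Z)_{(p)}$, and each $p$-primary component embeds into the cohomology of the corresponding Sylow subgroup. The forward direction is then immediate: a periodicity element in $H^*(G;\Z)$ restricts to one in each $H^*(P;\Z)$. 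For the converse, letting $n$ be the least common multiple of the periods of the Sylow subgroups, I would assemble a class $\chi \in H^n(G;\Z)$ whose restriction generates each $H^n(P;\Z)_{(p)}$ and use the restriction-transfer identity together with the $p$-primary decomposition to show cup product with $\chi$ induces isomorphisms in every positive degree.

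Second I would classify the $p$-groups with periodic cohomology. Suppose $P$ is a $p$-group whose cohomology is periodic. I would first show that $P$ cannot contain a subgroup $A \cong \Z_p \times \Z_p$: otherwise $H^*(A;\F_p)$ would inherit periodicity (propagated from $H^*(P;\F_p)$ via an analogous subgroup analysis), but this contradicts the well-known polynomial-growth computation of $H^*(\Z_p \times \Z_p;\F_p)$, whose $\F_p$-rank in degree $n$ grows linearly in $n$. Thus $P$ has no elementary abelian subgroup of rank two, which is equivalent to $P$ having a unique subgroup of order $p$. Theorem \ref{pgroup} then forces $P$ to be cyclic or generalized quaternion. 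Conversely, cyclic $p$-groups have $2$-periodic integer cohomology via the standard presentation $H^*(\Z_{p^k};\Z) \cong \Z[y]/(p^k y)$ with $\deg y = 2$, and generalized quaternion groups have $4$-periodic integer cohomology; the mod-$2$ version of the latter is already recorded in Lemma \ref{cohomQ}(2).

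The main obstacle I expect is the converse direction of the Sylow reduction, namely promoting periodicity of each $p$-primary component of $H^*(G;\Z)$ to a single class $\chi \in H^n(G;\Z)$ that simultaneously induces isomorphisms in every $p$-primary piece in positive degrees. This requires combining transfer-restriction compatibility with a Chinese Remainder style assembly across primes, and is the technical heart of the Cartan-Eilenberg argument. A second, more subtle point is making precise the propagation of periodicity from $P$ to its subgroup $A \cong \Z_p \times \Z_p$; in the classical treatment this is handled via the interplay between Tate cohomology and the Mackey double-coset formula.
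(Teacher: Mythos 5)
The paper does not give its own proof of this theorem; it cites it directly as Theorem~XII.11.6 of Cartan and Eilenberg. Your sketch is a correct outline of precisely that classical argument: reduce to Sylow subgroups via restriction--transfer on the $p$-primary decomposition, then observe that a $p$-group with periodic cohomology cannot contain $\Z_p\times\Z_p$ (by the linear rank growth of $H^*(\Z_p\times\Z_p;\F_p)$), hence has a unique subgroup of order $p$ and is cyclic or generalized quaternion by Theorem~\ref{pgroup}; the converse uses the $2$-periodicity of cyclic $p$-groups and $4$-periodicity of generalized quaternion groups. You also correctly flag where the real work lies. One small correction of emphasis: the ``Chinese Remainder'' assembly across primes is the \emph{easy} part, since the $p$-primary decomposition of $\hat{H}^*(G;\Z)$ lets you work one prime at a time and then take the least common multiple of periods; the genuine difficulty, for a fixed prime $p$, is showing that a suitable power of a Sylow periodicity class is \emph{stable} in the Cartan--Eilenberg sense (invariant under the Mackey double-coset action) and hence lies in the image of restriction from $\hat{H}^*(G;\Z)_{(p)}$. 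Note also that the propagation of periodicity to the elementary abelian subgroup $A\cong\Z_p\times\Z_p$ of $P$ is in fact immediate once one works in Tate cohomology: restriction $\hat{H}^*(P)\to\hat{H}^*(A)$ is a unital ring homomorphism, so it sends the invertible periodicity element to an invertible element, no Sylow machinery needed. With those clarifications your plan is sound and follows the cited source.
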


By Theorem \ref{pgroup}, for any finite group $G$ that contains no $\Z_p\times \Z_p$ subgroup,  it follows that $Syl_p(G)$  is either cyclic or, when $p=2$, generalized quaternion. If this is true for all $p$, Theorem \ref{syl} then tells us that $G$ has periodic cohomology and we obtain the following corollary for the fundamental group of $M$.
\begin{corollary}\label{sylc} Let $M$ be a manifold with finite fundamental group. Suppose $\widetilde{M}$  does not admit a free action of $\Z_p\times \Z_p$. Then  $Syl_p(\pi_1(M))$ is either cyclic or generalized quaternion. Moreover, if we assume this to be true for all primes $p$, then $\pi_1(M)$ has periodic cohomology.
\end{corollary}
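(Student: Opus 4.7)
The plan is to translate the freeness hypothesis into a subgroup condition on $G := \pi_1(M)$ and then apply Theorems \ref{pgroup} and \ref{syl} directly. First I would observe that $G$ acts freely on $\widetilde{M}$ by deck transformations, and consequently every subgroup of $G$ also acts freely. In particular, if $G$ had a subgroup isomorphic to $\Z_p \times \Z_p$, the restriction of the deck action would give a free $(\Z_p\times\Z_p)$-action on $\widetilde{M}$, contradicting the hypothesis. Thus $G$, and hence its Sylow $p$-subgroup $P := Syl_p(G)$, contains no subgroup isomorphic to $\Z_p \times \Z_p$.

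The key step is then to show that $P$ has a unique subgroup of order $p$, which will let us invoke Theorem \ref{pgroup}. Since $P$ is a non-trivial finite $p$-group, its center $Z(P)$ is non-trivial and therefore contains a subgroup $C$ of order $p$. If $H \leq P$ is any other subgroup of order $p$, then $H \cap C = \{e\}$ (both have prime order), and since $C$ lies in the center of $P$, elements of $H$ and $C$ commute. Hence $\langle H, C \rangle$ is abelian of order $p^2$ and exponent $p$, that is, isomorphic to $\Z_p \times \Z_p$, contradicting what was established in the previous step. So $C$ is the unique subgroup of order $p$ in $P$, and Theorem \ref{pgroup} yields that $P$ is cyclic or, when $p=2$, generalized quaternion.

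Finally, if the hypothesis holds for every prime $p$ dividing $|G|$, then every Sylow subgroup of $G$ is cyclic or (for $p=2$) generalized quaternion, and Theorem \ref{syl} immediately implies that $G = \pi_1(M)$ has periodic cohomology. The argument is essentially a clean combination of the two cited theorems, so there is no serious obstacle; the only genuinely nontrivial ingredient is the standard fact that the center of a non-trivial finite $p$-group is non-trivial, which is precisely what forces a copy of $\Z_p \times \Z_p$ to appear whenever $P$ has two distinct subgroups of order $p$.
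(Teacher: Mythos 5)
Your proof is correct and follows exactly the route the paper takes (it states the corollary as an immediate consequence of Theorems \ref{pgroup} and \ref{syl} without spelling out a proof). The one detail you supply that the paper glosses over is the reduction from ``$Syl_p(G)$ has no $\Z_p\times\Z_p$ subgroup'' to ``$Syl_p(G)$ has a unique subgroup of order $p$,'' via the non-trivial center of a finite $p$-group; that is the right way to fill the gap, so the argument is complete.
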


We now recall Propositions 2.3 and 2.5 in Su \cite{Su}, which characterize the $\Z_2$-cohomology ring of the quotient of a free $\Z_2$-action on a $\Z_2$-cohomology sphere.
\begin{proposition}\textup{\cite{Su}}\label{Su}
Let $\Z_2$ act freely on $X$. Then $X$ is a  $\Z_2$-cohomology $n$-sphere if and only if $X/\Z_2$ is a $\Z_2$-cohomology real projective $n$-space.
\end{proposition}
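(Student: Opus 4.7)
The plan is to analyze the Borel fibration
\[ X \longrightarrow E\Z_2 \times_{\Z_2} X \longrightarrow B\Z_2, \]
together with its Serre spectral sequence in $\Z_2$-coefficients. Since $\Z_2$ acts freely on $X$, the projection $E\Z_2 \times_{\Z_2} X \to X/\Z_2$ is a homotopy equivalence, so $H^*_{\Z_2}(X;\Z_2) \cong H^*(X/\Z_2;\Z_2)$. Because every automorphism of $\Z_2$ is trivial, $\pi_1(B\Z_2)$ acts trivially on $H^*(X;\Z_2)$, and the Universal Coefficient Theorem (combined with $H^*(B\Z_2;\Z_2) \cong \Z_2[x]$, $\deg x = 1$) gives
\[ E_2^{p,q} \cong H^p(B\Z_2;\Z_2)\otimes H^q(X;\Z_2). \]

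For the forward direction, assume $X$ is a $\Z_2$-cohomology $n$-sphere. Then only the rows $q=0$ and $q=n$ are nonzero, and the sole potentially nonzero differential is $d_{n+1}: E_{n+1}^{p,n} \to E_{n+1}^{p+n+1,0}$. Write $y$ for the generator of $H^n(X;\Z_2)$ and set $d_{n+1}(y) = \alpha x^{n+1}$ with $\alpha \in \Z_2$. If $\alpha = 0$ the spectral sequence collapses, producing nontrivial classes in $H^k(X/\Z_2;\Z_2)$ for arbitrarily large $k$, which contradicts the finiteness of the cohomological dimension of $X/\Z_2$ (the standing finitistic hypothesis on $X$). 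Hence $\alpha = 1$, and the Leibniz rule gives $d_{n+1}(x^p y) = x^{p+n+1}$ for all $p \geq 0$. Consequently $E_\infty^{p,0} = \Z_2$ for $0 \leq p \leq n$ and all other groups vanish, and multiplicativity of the spectral sequence identifies
\[ H^*(X/\Z_2;\Z_2) \cong \Z_2[x]/(x^{n+1}) \cong H^*(\RP^n;\Z_2). \]

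For the converse, the cleanest route is to bypass the full spectral sequence bookkeeping and use the Gysin sequence of the double cover $X \to X/\Z_2$ with $\Z_2$-coefficients:
\[ \cdots \to H^{k-1}(X/\Z_2;\Z_2) \xrightarrow{\smile w} H^{k}(X/\Z_2;\Z_2) \to H^{k}(X;\Z_2) \to H^{k}(X/\Z_2;\Z_2) \xrightarrow{\smile w} H^{k+1}(X/\Z_2;\Z_2) \to \cdots, \]
where $w \in H^1(X/\Z_2;\Z_2)$ is the class classifying the cover. Under the hypothesis $H^*(X/\Z_2;\Z_2) \cong \Z_2[x]/(x^{n+1})$, the edge homomorphism $H^1(B\Z_2;\Z_2) \to H^1(X/\Z_2;\Z_2)$ identifies $w$ with $x$, so $\smile w$ is an isomorphism in degrees $0 \leq k \leq n-1$ and vanishes in degree $n$. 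Reading off the Gysin sequence then forces $H^k(X;\Z_2) = 0$ for $0 < k < n$, $H^0(X;\Z_2) \cong H^n(X;\Z_2) \cong \Z_2$, and $H^k(X;\Z_2)=0$ for $k>n$, giving the $\Z_2$-cohomology of $S^n$.

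The main obstacle I expect is verifying in the converse direction that $w$ is indeed the generator of $H^1(X/\Z_2;\Z_2)$ (rather than zero); this follows since a trivial $w$ would mean the cover is trivial, contradicting connectedness of $X$ once $X/\Z_2$ is not a sphere -- but one should also handle the low-dimensional edge cases $n = 0, 1$ separately, where $X/\Z_2$ either equals $\RP^0$ (forcing $X = S^0$ trivially) or is a $\Z_2$-cohomology circle (and a Gysin-sequence computation in degree $1$ shows $H^1(X;\Z_2) = 0$, as required).
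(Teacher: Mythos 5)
The paper itself does not prove this proposition; it is simply cited from Su \cite{Su}, so there is no in-paper proof to compare against. Judged on its own terms, your reconstruction is essentially sound: the forward direction via the Borel fibration and its Serre spectral sequence (with the non-triviality of $d_{n+1}$ forced by the finiteness of the cohomological dimension of $X/\Z_2$) is the standard Smith-theory argument, and is in fact the same style of computation the paper uses elsewhere (e.g.\ in Lemma \ref{freez2} and Proposition \ref{s3quotient}). The converse via the Gysin sequence of the double cover is also correct in its main line.

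Two points deserve attention. First, the converse direction genuinely requires $X$ to be connected: if $w=0$ the Gysin sequence gives $H^k(X;\Z_2)\cong H^k(X/\Z_2;\Z_2)^2$ for all $k$, and this really can happen (take $X=\RP^n\sqcup\RP^n$ with $\Z_2$ swapping the factors). So ``$w$ is the generator'' is not a formal consequence of the hypotheses as stated in the proposition; it needs either a connectivity hypothesis on $X$ or the observation that in the paper's only application $X$ is a universal cover and hence connected. Your remark acknowledges the issue but the phrase ``once $X/\Z_2$ is not a sphere'' does not actually close the gap. Second, the parenthetical claim for the $n=1$ edge case is wrong: under the hypothesis that $X/\Z_2$ is a $\Z_2$-cohomology $S^1$ with $w\neq 0$, the Gysin sequence yields $H^1(X;\Z_2)\cong\Z_2$, not $0$ --- which is exactly what is required for $X$ to be a $\Z_2$-cohomology $S^1$. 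As written, the parenthetical would disprove the statement you are trying to establish.
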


 In the following proposition we utilize the Serre spectral sequence, first recalling that if $G$, a finite group,  acts freely on $S^n$, then by Proposition \ref{zpzp} and Corollary \ref{sylc}, $Syl_2(G)$ is either cyclic or generalized quaternion.  We note that the results in Proposition \ref{s3quotient} below were already known to be true, see Tomoda and Svengrowski \cite{TZ}. We include a proof that uses different techniques and significantly streamlines the previous proof. We observe that it also allows one to immediately read off the cohomology groups of the manifolds in question, as one sees in Corollary \ref{cors3}.

\begin{proposition}\label{s3quotient}
Let $G$ be a finite group acting freely on $S^3$. Then one of the following must hold.
\begin{enumerate}
		\item If $Syl_2(G)$ is cyclic, then $H^*(S^3/G;\Z_2) \cong H^*(S^3/\Z_k;\Z_2)$ with $k\in\{1,2,4\}$;
		\item If $Syl_2(G)$ is generalized quaternion, then $H^*(S^3/G;\Z_2) \cong H^*(S^3/\Gamma;\Z_2)$ where  $\Gamma$ is a subgroup of $Q_{16}\,.$ 
\end{enumerate}
\end{proposition}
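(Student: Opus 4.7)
The plan is to apply the Serre spectral sequence of the Borel fibration
\[S^3 \longrightarrow EG \times_G S^3 \longrightarrow BG,\]
using the fact that $G$ acts freely on $S^3$ to identify the total space with $S^3/G$, so that $H^*_G(S^3;\Z_2) \cong H^*(S^3/G;\Z_2)$. With $\Z_2$-coefficients, the $E_2$-page is
\[E_2^{p,q} \cong H^p(G;\Z_2) \otimes H^q(S^3;\Z_2),\]
concentrated in rows $q = 0$ and $q = 3$ (the monodromy action of $\pi_1(BG) = G$ on $H^*(S^3;\Z_2)$ is trivial since each group there is $\Z_2$ and orientation is invisible mod $2$). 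By multiplicativity, the only possibly nontrivial differential is $d_4 \colon E_4^{p,3} \to E_4^{p+4,0}$, given by cupping with $\epsilon := d_4(\alpha) \in H^4(G;\Z_2)$ for a generator $\alpha \in H^3(S^3;\Z_2)$. Since $S^3/G$ is a closed $3$-manifold, $E_\infty^{p,q} = 0$ for $p + q \geq 4$, which forces cupping with $\epsilon$ to be surjective on $H^p(G;\Z_2)$ for $p \geq 0$ and injective for $p \geq 1$; in particular $H^{\geq 1}(G;\Z_2)$ is $4$-periodic via $\epsilon$.

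In the relevant range $n \leq 3$, the only nontrivial $E_\infty$-entries contributing to $H^n(S^3/G;\Z_2)$ are $E_\infty^{n,0} \cong H^n(G;\Z_2)$ (plus a summand $E_\infty^{0,3} \cong \Z_2$ in degree $3$ in the exceptional case $\epsilon = 0$), so the full ring structure of $H^*(S^3/G;\Z_2)$ is read off from the low-degree part of $H^*(G;\Z_2)$. To compute the latter, I pass to $P := \mathrm{Syl}_2(G)$: the intermediate cover $\pi \colon S^3/P \to S^3/G$ has odd degree $[G:P]$, so Proposition \ref{transfer} yields a ring injection $\pi^* \colon H^*(S^3/G;\Z_2) \hookrightarrow H^*(S^3/P;\Z_2)$. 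In Case (1), $P \cong \Z_{2^a}$, and Lemma \ref{cohomlens} gives three isomorphism types for $H^*(S^3/P;\Z_2)$, namely those of $S^3$, $\RP^3$, and $L^3_4$, corresponding to $a = 0$, $a = 1$, and $a \geq 2$---matching $k \in \{1, 2, 4\}$. In Case (2), $P \cong Q_{2^i}$ with $i \geq 3$, and Lemma \ref{cohomQ} supplies two isomorphism types for $H^*(P;\Z_2)$, matching $H^*(S^3/Q_8;\Z_2)$ and $H^*(S^3/Q_{16};\Z_2)$.

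The main obstacle is to identify the sub-ring image of $\pi^*$, rather than merely the ambient ring $H^*(S^3/P;\Z_2)$. When $G/P$ acts nontrivially on $P$ by outer automorphisms---for example $G = T^* = Q_8 \rtimes \Z_3$, whose order-$3$ outer action cyclically permutes the three nonzero classes of $H^1(Q_8;\Z_2)$---the image of $\pi^*$ is strictly smaller and matches $H^*(S^3/\Gamma;\Z_2)$ for a proper subgroup $\Gamma < Q_{16}$; in the $T^*$ example, $\Gamma = \{1\}$ and $H^*(S^3/T^*;\Z_2) \cong H^*(S^3;\Z_2)$. Completing the proof therefore requires a careful enumeration of these invariant sub-rings---most substantively in the quaternion case---and verification that each realizes $H^*(S^3/\Gamma;\Z_2)$ for some $\Gamma \leq Q_{16}$.
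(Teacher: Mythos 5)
Your overall strategy matches the paper's: set up the Borel fibration, observe that only $d_4$ can be nontrivial with $\Z_2$-coefficients, and use the transfer (Proposition \ref{transfer}) to reduce from $G$ to its Sylow $2$-subgroup $P$, distinguishing the cyclic and generalized-quaternion cases. Your $T^*$ example is correct and nicely illustrates that the image of $\pi^*$ can be a proper subring.

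However, you have not finished the argument, and you say so yourself: ``Completing the proof therefore requires a careful enumeration of these invariant sub-rings \ldots.'' This last step is precisely where the paper does something concrete that your proposal leaves undone. After obtaining the ring injection $\pi^*\colon H^*(S^3/G;\Z_2)\hookrightarrow H^*(S^3/P;\Z_2)$, the paper does \emph{not} attempt to identify the image as a $G/P$-invariant subring; instead it invokes Poincar\'e duality for the closed $3$-manifold $S^3/G$ to force $\rk H^1(S^3/G;\Z_2) = \rk H^2(S^3/G;\Z_2)$, and the injection bounds this common rank by $2$. Running through the three possible ranks (and, in rank $1$, the two ring structures allowed by Poincar\'e duality, namely $a^2 = 0$ or $a^2 \neq 0$) pins $H^*(S^3/G;\Z_2)$ down to one of five types, each realized by $S^3/\Gamma$ for some $\Gamma \leq Q_{16}$. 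This Poincar\'e-duality step is what your proposal is missing; it is both simpler and more robust than the invariant-subring enumeration you sketch, since it bypasses any need to analyze the outer action of $G/P$ on $H^*(P;\Z_2)$. Adding that one observation (PD on $S^3/G$ plus the rank bound from the injection) would close your argument.
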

\begin{proof}
 We first prove Part 1, when 
$Syl_2(G) \cong \Z_{2^i}$.  Consider the orbit space of the action of $Syl_2(G)$ on $S^3$.  
Since the action is free, $S^3/Syl_2(G)$ is a manifold, hence homeomorphic to a lens space. 
It follows that $S^3/Syl_2(G)\to S^n/G$ is an odd sheeted covering, and it follows from Proposition \ref{transfer} that $H^i(S^3/G;\Z_2)$ injects into $H^i(S^3/Syl_2(G);\Z_2)$ for all $i$.  Hence $S^3/G$ is a $\Z_2$-cohomology sphere or lens space, that is, $H^*(S^3/G;\Z_2)\cong H^*(S^3/\Z_k;\Z_2)$ with $k\in\{1,2,4\}$.

We now prove Part 2. Suppose $Syl_2(G)\cong 
Q_{i}$.  Consider the free action of $Q_i$ on $S^3$ and the Serre spectral sequence associated to the Borel fibration, $S^3\to E{Q_i}\times_{Q_i} S^3\to BQ_i$, and recall the cohomology ring of $BQ_i$ from Lemma \ref{cohomQ}.
 Since all $\Z_2$-cohomology groups of $S^3$ are either trivial or $\Z_2$, we observe that $\pi_1(BQ_i)$ acts trivially on $H^*(S^3;\Z_2)$. We then see that $E_1=E_2=E_3$ and $E_4=E_{\infty}$, so we need only compute $d_4$.  
  Since $H^i(S^3/G;\Z_2)=0\,\,\textrm{for all}\,i\ge 4$, it follows that $d_4:E^{j,3}_4\to E_4^{4+j,0}$ must be an isomorphism for all $j\geq 0$.
  By Lemma \ref{cohomQ}, we then have 
		\[H^k(S^3/Q_i;\Z_2)\cong\begin{cases}\Z_2\text{ for } k=0,3 \\ \Z^2_2\text{ for } k=1,2\end{cases},\]
where the ring structure on $H^*(S^3/Q_i;\Z_2)$ is determined by the cup products of elements in the first three cohomology groups of $H^*(Q_i;\Z_2)$.
It follows also from Lemma \ref{cohomQ} that there are only two possible isomorphism types, that of $H^*(S^3/Q_8;\Z_2)$ and that of $H^*(S^3/Q_{16};\Z_2)$. By Proposition \ref{transfer} and Poincar\'e duality, it follows that 
		\[\rk\(H^1(S^3/G;\Z_2)\)=\rk\(H^2(S^3/G;\Z_2)\)\in\{0,1,2\}.\] 
 \noindent We conclude that $H^*(S^3/G;\Z_2)\cong H^*(S^3;\Z_2)$, $H^*(S^3/\Z_k;\Z_2)$ for $k\in\{2,4\}$, or $H^*(S^3/Q_i;\Z_2)$ with $i\in\{8,16\}$.
\end{proof}

Since every closed $3$-manifold with positive sectional curvature is homotopy equivalent to a spherical space form $S^3/\Gamma$, we can use Proposition \ref{s3quotient} to determine the $\Z_2$-cohomology ring of such manifolds.
\begin{corollary}\label{cors3} Any positively curved closed 3-manifold $M$ has the $\Z_2$-cohomology ring of an $S^3/\Gamma$ where $\Gamma\leq Q_{16}$, the generalized quaternion group of order $16$. In particular, $M$ is a $\Z_2$-cohomology $S^3$, $\RP^3$, $S^3/\Z_4$, or $S^3/Q_i$ with $i\in\{8,16\}$.
\end{corollary}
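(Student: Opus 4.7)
The plan is to reduce the statement to a direct application of Proposition \ref{s3quotient} together with the topological classification of positively curved closed $3$-manifolds. By Hamilton's theorem \cite{H}, every closed $3$-manifold with positive sectional (equivalently, positive Ricci) curvature is diffeomorphic to a spherical space form $S^3/\Gamma$, where $\Gamma$ is a finite subgroup of $\mathrm{SO}(4)$ acting freely on $S^3$. In particular, $M$ is homotopy equivalent to $S^3/\Gamma$, so it suffices to identify the $\Z_2$-cohomology ring of $S^3/\Gamma$ as $\Gamma$ ranges over all finite groups admitting a free action on $S^3$.

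I would then apply Proposition \ref{s3quotient} directly to $\Gamma$, splitting into the two cases determined by $\mathrm{Syl}_2(\Gamma)$. If $\mathrm{Syl}_2(\Gamma)$ is cyclic, Proposition \ref{s3quotient}(1) gives $H^*(S^3/\Gamma;\Z_2) \cong H^*(S^3/\Z_k;\Z_2)$ for some $k\in\{1,2,4\}$. Since $\Z_1$, $\Z_2$, and $\Z_4$ all embed as cyclic subgroups of $Q_{16}$, each of these rings is of the form $H^*(S^3/\Gamma';\Z_2)$ with $\Gamma'\leq Q_{16}$. If $\mathrm{Syl}_2(\Gamma)$ is generalized quaternion, Proposition \ref{s3quotient}(2) already gives $H^*(S^3/\Gamma;\Z_2)\cong H^*(S^3/\Gamma';\Z_2)$ with $\Gamma'\leq Q_{16}$. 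In either case the first assertion of the corollary follows.

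For the ``In particular'' statement, I would enumerate the distinct $\Z_2$-cohomology rings that actually appear among the subgroups $\Gamma'\leq Q_{16}$. The cyclic subgroups of $Q_{16}$ are $\{1\}$, $\Z_2$, $\Z_4$, $\Z_8$; by Lemma \ref{cohomlens}, the corresponding quotients $S^3/\Z_k$ have $\Z_2$-cohomology ring isomorphic to that of $S^3$ (for $k=1$), $\RP^3$ (for $k\equiv 2\pmod 4$), or $S^3/\Z_4$ (for $k\equiv 0\pmod 4$), so $\Z_8$ contributes no new isomorphism type beyond $S^3/\Z_4$. The non-cyclic subgroups are $Q_8$ and $Q_{16}$ themselves, contributing $H^*(S^3/Q_8;\Z_2)$ and $H^*(S^3/Q_{16};\Z_2)$. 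As these two rings are distinguished from each other and from the cyclic cases by $\mathrm{rk}(H^1(-;\Z_2))$ and the ring structure computed via the Serre spectral sequence in the proof of Proposition \ref{s3quotient}, the final list in the corollary is complete.

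There is no real obstacle here: the substantive input is Hamilton's classification, while the cohomology computation is precisely what Proposition \ref{s3quotient} was designed to deliver. The only minor bookkeeping is verifying that the cyclic-case outputs are absorbed into the $Q_{16}$-subgroup framework, which follows from the observation that $Q_{16}$ contains cyclic subgroups of orders $1,2,4,8$.
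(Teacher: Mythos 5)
Your proof is correct and matches the paper's approach exactly: the paper also invokes the classification of positively curved closed $3$-manifolds as spherical space forms and then applies Proposition \ref{s3quotient} to read off the $\Z_2$-cohomology ring. Your extra bookkeeping (noting that $\Z_1,\Z_2,\Z_4,\Z_8,Q_8,Q_{16}$ are the subgroups of $Q_{16}$ and that their quotients realize precisely the listed isomorphism types) is a slightly more explicit justification of the ``In particular'' clause than the paper gives, but it is the same argument.
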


Finally, we classify the four-periodic $\Z_2$-cohomology rings of closed manifolds all of whose covers have $k$-periodic $\Z_2$-cohomology, $k\in\{1, 2, 4\}$ in the following theorem.

\begin{remark} Throughout the remainder of this article, we write $M\sim_{\Z_2}N$ to denote that $M$ and $N$ have isomorphic $\Z_2$-cohomology rings, that is,  $H^*(M;\Z_2)\cong H^*(N;\Z_2)$.
\end{remark}
\begin{theorem}\label{z2anypi1}\text
    Let $M^n$ be a closed, positively curved $n$-manifold with $k$-periodic $\Z_2$-cohomology. 
    \lista
    
    \item If $k\in\{1,2\}$ and $n\geq 4$, then $M$ has the $\Z_2$-cohomology ring of $S^{n}$, $\CP^{n/2}$, $\RP^n$, or $L_4^n$.
    
   \item  If $k=4$, $n\geq 8$, and for any cover $\widebar{M}\to M$, we have that $\widebar{M}$ also has four-periodic $\Z_2$-cohomology then $M$ has the $\Z_2$-cohomology ring of $S^{n}$, $\CP^{n/2}$, $\HP^{n/4}$,  $S^2\times \HP^{\frac{n-2}4}$, $S^3\times\HP^{\frac{n-3}4}$, $L_k^n$ for $k\in\{2,4\}$, $\RP^2\times \HP^{\frac{n-2}4}$, $L_k^3\times\HP^{\frac{n-3}4}$ with $k\in\{2,4\}$, or $S^{n}/Q_i$ for $i\in\{8,16\}$ and $n\equiv 3\pmod{4}$.
    \listb
\end{theorem}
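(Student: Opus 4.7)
The plan is to split on the minimum degree $l$ of a periodicity-inducing element in $H^*(M;\Z_2)$. By Theorem~\ref{z2}, $l$ is a power of $2$ dividing $k$, so $l\in\{1,2\}$ in part~(a) and $l\in\{1,2,4\}$ in part~(b); when $l<k$ the conclusion follows from the smaller-periodicity case, so the substantive work is $l=k$. For part~(a) the analysis is direct: if $l=1$ with non-zero periodicity element $x\in H^1$, successive cup products give $x^i\ne 0$ for all $i\le n$ and $M\sim_{\Z_2}\RP^n$ (otherwise $M\sim_{\Z_2}S^n$); if $l=2$ with non-zero $y\in H^2$, Lemma~\ref{factor} applied to any $z\in H^1$ forces $zw=0$ for all $w\in H^1$, and the non-degeneracy of the Poincar\'e pairing $H^1\otimes H^{n-1}\to H^n$ then forces $H^1=0$ when $n$ is even (yielding $\CP^{n/2}$) and $H^1\cong\Z_2$ when $n$ is odd (yielding the cohomology ring of $L^n_4$ by Lemma~\ref{cohomlens}).

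For part~(b) with $l=4$ and $n\equiv 0\pmod 4$, Synge's theorem gives $\pi_1(M)\in\{1,\Z_2\}$. The simply connected case is immediate from Proposition~\ref{4z2}. If $\pi_1(M)=\Z_2$, the hypothesis on covers makes $\widetilde M$ simply connected and four-periodic, so Proposition~\ref{4z2} restricts it to $S^n$, $\CP^{n/2}$, or $\HP^{n/4}$; Proposition~\ref{Su} then handles the sphere case to give $M\sim_{\Z_2}\RP^n$. The $\CP^{n/2}$ case, and $\HP^{n/4}$ when $n/4$ is even, are excluded by parity of $\chi(\widetilde M)$ (odd, while a free $\Z_2$-action requires $\chi(\widetilde M)$ even). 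The residual case $\widetilde M\sim_{\Z_2}\HP^{n/4}$ with $n/4$ odd is ruled out by an explicit Serre spectral sequence computation for $\widetilde M\to E\Z_2\times_{\Z_2}\widetilde M\to B\Z_2$ (with the first non-trivial differential $d_5$ forced by the requirement that the quotient have cohomology concentrated in degrees $\le n$), which shows that $H^*(M;\Z_2)$ is not four-periodic, contradicting the hypothesis on covers.

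For part~(b) with $l=4$ and $n\not\equiv 0\pmod 4$, Proposition~\ref{4z2} restricts $H^*(\widetilde M;\Z_2)$ to $S^n$, $S^2\times\HP^{(n-2)/4}$ (if $n\equiv 2\pmod 4$), or $S^3\times\HP^{(n-3)/4}$ (if $n\equiv 3\pmod 4$). Proposition~\ref{zpzp} (sphere covers) and Lemma~\ref{freez2} (for $S^3$-product covers, together with the hypothesis on covers and a parallel argument for the $S^2$-product case) eliminate $\Z_2\times\Z_2$ subgroups of $\pi_1(M)$, so by Corollary~\ref{sylc} every $S:=Syl_2(\pi_1(M))$ is cyclic or generalized quaternion. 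Set $\widebar M:=\widetilde M/S$ and compute $H^*(\widebar M;\Z_2)$ from the Serre spectral sequence of the Borel fibration $\widetilde M\to ES\times_S\widetilde M\to BS$, using Lemma~\ref{cohomQ} for the base and Proposition~\ref{4z2} for the fibre; the first non-trivial differential is forced by the concentration of $H^*(\widebar M;\Z_2)$ in degrees $\le n$, in the spirit of Proposition~\ref{s3quotient}. Since $[\pi_1(M):S]$ is odd, Proposition~\ref{transfer} then yields an injection $H^*(M;\Z_2)\hookrightarrow H^*(\widebar M;\Z_2)$, and combining this with four-periodicity and Poincar\'e duality on $M$ pins down $H^*(M;\Z_2)$ as one of the listed rings.

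The main obstacle will be the Serre spectral sequence computations in the last case, especially when $\widetilde M\sim_{\Z_2}S^2\times\HP^{(n-2)/4}$ or $S^3\times\HP^{(n-3)/4}$ and $S$ is a generalized quaternion group: multiple differentials can be non-trivial, and identifying the multiplicative structure on the $E_\infty$-page with that of $L^3_k\times\HP^{(n-3)/4}$, $\RP^2\times\HP^{(n-2)/4}$, or $S^n/Q_i$ requires careful bookkeeping using the generators and relations in Lemma~\ref{cohomQ}, along with verification that the odd-index passage from $\widebar M$ to $M$ preserves the isomorphism type.
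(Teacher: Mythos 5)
Your overall strategy -- split on the minimal degree $l$ of a periodicity-inducing element, handle $l<k$ by the lower-periodicity case, then for $l=4$ pass to the universal cover via Proposition~\ref{4z2} and control $\pi_1(M)$ via Sylow 2-subgroups and Serre spectral sequences -- is essentially the same skeleton as the paper's proof, and your treatment of part~(a) matches the paper. However, for part~(b) there is a concrete gap and a couple of inefficiencies worth flagging.

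The genuine gap is in your list of possibilities for $H^*(\widetilde M;\Z_2)$ when $n\equiv 2\pmod 4$. You write that Proposition~\ref{4z2} restricts the universal cover to $S^n$ or $S^2\times\HP^{(n-2)/4}$, but for $n\equiv 2\pmod 4$ the proposition also allows $\widetilde M\sim_{\Z_2}\CP^{n/2}$ (which is genuinely relevant: $\widetilde M$ may have strictly smaller minimal periodicity than $M$). The paper treats $\CP^{n/2}$ and $S^2\times\HP^{(n-2)/4}$ together in a single spectral-sequence computation -- the argument only uses the generator $y\in H^2(\widetilde M;\Z_2)$ and does not depend on whether $y^2$ vanishes -- and in both cases concludes $M\sim_{\Z_2}\RP^2\times\HP^{(n-2)/4}$ when $\pi_1(M)=\Z_2$. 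Your "parallel argument for the $S^2$-product case" is asserted rather than given; note also that for $n\equiv 2\pmod 4$ Synge's theorem directly gives $\pi_1(M)\in\{1,\Z_2\}$, so the Sylow/odd-covering machinery is unnecessary in that congruence class. You should either supply the spectral-sequence argument covering $\CP^{n/2}$, or observe as the paper does that it is identical to the $S^2\times\HP$ case.

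Beyond that, two routes you take are more circuitous than the paper's. For $n\equiv 0\pmod 4$, you argue via $\pi_1(M)$, Euler-characteristic parity, and a supplementary spectral-sequence computation when $n/4$ is odd; the paper instead argues directly on $M$: since $x^{n/4}$ generates $H^n(M;\Z_2)$, Poincar\'e duality together with Lemma~\ref{factor} forces $H^i(M;\Z_2)=0$ for $1\le i\le 3$ (any non-trivial class would factor the top class and produce a periodicity element of degree below $4$), giving $M\sim_{\Z_2}\HP^{n/4}$ without any reference to the fundamental group. Similarly for $n\equiv 1\pmod 4$, rather than running the Sylow/Borel argument, the paper observes that $\pi_1(M)$ must have periodic group cohomology with period dividing $n+1\equiv 2\pmod 4$, whence by Theorem~\ref{z2} the minimal period is $1$ or $2$, and the spectral sequence of the Borel fibration shows $H^*(M;\Z_2)$ inherits this $1$- or $2$-periodicity -- reducing the whole case to part~(a). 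These simplifications are not required for correctness, but they substantially shorten the work, and in the $n\equiv 1\pmod 4$ case they sidestep the (true but unproven in your sketch) claim that a generalized quaternion Sylow subgroup cannot act freely on a $\Z_2$-cohomology $S^n$ with $n\equiv 1\pmod 4$.
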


\begin{proof}
 
Throughout the proof we are working with $\Z_2$-coefficients and as closed manifolds are  $\Z_2$-orientable, we may use tools 
such as Poincar\'e duality. 
We break the proof into two parts, Part 1 where $ k\in\{1,2\}$ and $n\geq 4$ and Part 2 where $k=4$ and $n\geq 8$.

We begin with the proof of Part 1, where we assume that 
$k\in\{1,2\}$ and $n\geq 4$.
 If the periodicity-inducing element in $H^*(M;\Z_2)$ is trivial, it follows from the $1$- or $2$-periodicity that $M$ is a $\Z_2$-cohomology sphere. Suppose that the periodicity-inducing element in $H^*(M;\Z_2)$ is non-trivial. 
If $H^*(M;\Z_2)$ is $1$-periodic, it follows by Remark \ref{surj} that $H^1(M;\Z_2)\cong \Z_2$ and hence $H^*(M;\Z_2)\cong \Z_2[x]/(x^{n+1})$, with $\deg(x)=1$, so $M$ is a $\Z_2$-cohomology $\RP^{n}$. 
We assume then that $H^*(M;\Z_2)$ is $2$-periodic, but not $1$-periodic. We break the reminder of the proof of Part 1 into two cases, according to whether the dimension of $M$ is even or odd.

 \vspace{.05cm}
\noindent{\bf Part 1, Case 1, when ${\mathbf{n}}$ is odd:} It follows from $2$-periodicity and Poincar\'e duality 
 that $H^i(M;\Z_2)\cong\Z_2$ for $i\in\{1,\cdots, n\}$. Again by $2$-periodicity, we have that $H^*(M;\Z_2)$ is generated by two elements, namely, $x\in H^1(M;\Z_2)$ and $y\in H^2(M;\Z_2)$. If $x^2=y$, then  Lemma \ref{factor} implies that $H^*(M;\Z_2)$ is $1$-periodic. Since we assumed that $H^*(M;\Z_2)$ is not $1$-periodic, $x^2=0$. It follows that $H^*(M;\Z_2)\cong \Z_2[x,y]/\(x^2,y^{(n+1)/2}\), \deg(x) =1, \deg(y)=2$. 
 Recall from Lemma \ref{cohomlens} that this is the $\Z_2$-cohomology ring of a lens space $L^n_m = S^n/\Z_m$ for $m \equiv 0\pmod{4}$. 

 \vspace{.05cm}
\noindent{\bf Part 2, Case 2, when ${\mathbf{n}}$ is even:} It follows from $2$-periodicity that the $\Z_2$-cohomology of $M$ is of the form
		\[ 
			H^i(M;\Z_2)\cong 
				\left\{\begin{array}{ll}  
								\Z^j_2 & \text{ if } i\,\,\, \text{is odd}\\
								\Z_2 & \text{ if } i\,\,\, \text{is even} \						
				 \end{array}\right.,
		\]
\noindent where $j$ is a fixed non-negative integer.  We show that $j=0$, that is, $M$ has the $\Z_2$-cohomology of $\CP^{\frac{n}2}$. Let $x_i$ denote a non-trivial element in $H^i(M; \Z_2)$. If instead $j\geq 1$, it follows from Poincar\'e duality that $H^{n}(M;\Z_2)\cong \Z_2$ is generated by a product of non-trivial elements $x_{\scriptstyle 1}\smile 
x_{\scriptstyle {n-1}}$. But by $2$-periodicity we know that $H^{n}(M;\Z_2)\cong \Z_2$ is generated by 
$(x_{\scriptstyle 2})^{n/2}$, where $x_{\scriptstyle 2}$ is the periodicity-inducing element in $H^{*}(M;\Z_2)$. Therefore, $x_{\scriptstyle 1}\smile x_{\scriptstyle {n-1}}=(x_{\scriptstyle 2})^{n/2}$, and it follows from Lemma \ref{factor} that $x_1$ is a periodicity-inducing element of degree $1$
in $H^*(M;\Z_2)$, a contradiction to   the assumption that $H^*(M;\Z_2)$ is not $1$-periodic.  
This completes the proof of Part 1.

We now prove Part 2. Since $H^*(M;\Z_2)$ is assumed to be 4-periodic, the minimal degree of periodicity in $H^*(M;\Z_2)$ is 1, 2 or 4. If the minimal degree of periodicity is 1 or 2, then Part 1 applies.
Hence we assume that $4$ is the minimal degree of periodicity in $H^*(M;\Z_2)$ and that the 4-periodicity inducing element, $x\in H^4(M;\Z_2)$, is non-trivial.
We break the remainder of the proof into cases according to the congruence class of $n\pmod{4}$. 

 \vspace{.05cm}
\noindent{\bf Part 2, Case 1, when $\mathbf{n\equiv 0\,\,(mod \,4)}$}: 
By hypothesis, $x^{n/4}$ generates $H^{n}(M;\Z_2)$. It follows by Poincar\'e duality and 
Lemma \ref{factor} that any non-trivial $y\in H^i(M;\Z_2)$ with $1\leq i\leq 3$ is a periodicity-inducing element of degree $i$, a contradiction to 
$4$ being the minimal degree of periodicity.
Hence $y=0$ and $M\sim_{\Z_2}\HP^{\frac{n}4}$.

 \vspace{.05cm}
\noindent{\bf Part 2, Case 2, when $\mathbf{n\equiv 1\,\,(mod \,4)}$}: We show that in this case $H^*(M;\Z_2)$ must have minimal periodicity of degree less than four, hence Part 1 applies.  By Proposition \ref{4z2}, 
$\widetilde{M}$ must be a $\Z_2$-cohomology sphere. 
 Since $G=\pi_1(M)$ acts freely on a $\Z_2$-cohomology sphere, Proposition \ref{zpzp} implies that $\Z_p^2$ is not a subgroup of $G$  for any prime $p$. By Theorem \ref{syl} and Remark 5.6 in Adem \cite{Ad}, it follows that $H^*(G;\Z_2)$ is periodic with strictly positive period dividing $(n+1)$. 
Since $(n+1)\equiv2\pmod 4$, Theorem \ref{z2} implies that the minimal degree of periodicity of $H^*(G;\Z_2) \cong H^*(BG;\Z_2)$ is $1$ or $2$. In the Serre spectral sequence associated to the Borel fibration $\widetilde{M}\to EG \times_{G}\widetilde{M}\ \to BG$, $d_{n+1}$ is the only non-trivial differential, since $\widetilde{M}$ is a $\Z_2$-cohomology sphere.  It follows that $E_3=E_{\infty}$ and so  $H^j(M;\Z_2) \cong H^j(\widetilde{M}/G;\Z_2)\cong E_{3}^{j,0}\cong H^j(BG;\Z_2)$ for $0\leq j\leq n$. We conclude that $H^*(M;\Z_2)$ is  $1$- or $2$- periodic, respectively.

 \vspace{.05cm}
\vspace{.05cm}
\noindent{\bf Part 2, Case 3, when $\mathbf{n\equiv 2\,\,(mod \,4)}$}:
By Synge's Theorem, $\pi_1(M)$ is either 0 or $\Z_2$. 
 The simply connected case follows from Proposition \ref{4z2}, so we only consider $\pi_1(M)\cong \Z_2$. If $\widetilde{M}\sim_{\Z_2}S^{n}$ then Theorem \ref{Su} implies that $M\sim_{\Z_2}\RP^n$. If $\widetilde{M}\sim_{\Z_2}\CP^{n/2}$ or $S^2\times \HP^{\frac{n-2}4}$ then, we again compute the Serre spectral sequence of the Borel fibration $\widetilde{M}\to E\Z_2 \times_{\\Z_2}\widetilde{M}\to B\Z_2$. If $y$ generates $H^2(\widetilde{M};\Z_2)$, the following proof works regardless of whether $y^2$ is trivial or non-trivial, that is, we may argue the cases of $\widetilde{M}\sim_{\Z_2}\CP^{n/2}$ and $S^2\times \HP^{\frac{n-2}4}$ simultaneously.
On the $E_2$ page, we see $d_2$ is trivial and hence $E_2=E_3$.  On the $E_3$-page, $d_3:E_3^{0,2}\cong \Z_2 \to E_3^{3,0} \cong \Z_2$ is either trivial or an isomorphism.  If it is trivial, then both $E_3^{0,2}$ and $E_3^{2,0}$ survive to $E_{\infty}$, and $H^2(M;\Z_2)\cong (\Z_2)^2$.  However,
 Poincar\'e duality and $4$-periodicity imply that $H^2(M;\Z_2)\cong H^2(\widetilde{M};\Z_2)\cong\Z_2$.  Hence $d_3:E_3^{0,2}\to E_3^{3,0}$ is an isomorphism.  Using the fact that $d_3\circ d_3=0$ we obtain that
 $d_3:E_3^{0,4}\to E_3^{3,2}$ is trivial. The product structure on $E_3$ then implies that $E_4^{i,4l}\cong \Z_2$ for $i\leq 2$ and $l\leq\frac{n+1}4$, and $E^{i,j}_4\cong0$ otherwise. Then $d_4=0$, $E_4=E_{\infty}$, and we see from the product structure on $E_4$ that 
$M\sim_{\Z_2}\RP^2\times\HP^{\frac{n-2}4}$.

 \vspace{.05cm}
\noindent {\bf Part 2, Case 4, when $\mathbf{n\equiv 3\,\,(mod \,4)}$}:  
Since the periodicity-inducing element $x_4\in H^4(M;\Z_2)$ is non-trivial, it follows from Poincar\'e duality and $4$-periodicity that $H^3\(M;\Z_2\)\cong H^4\(M;\Z_2\)\cong \Z_2$ and $H^1\(M;\Z_2\)\cong H^2\(M;\Z_2\)\cong \Z^l_2$ for some $l\geq 0$. 

If $l=0$ then $M\sim_{\Z_2}S^3\times \HP^{\frac{n-3}4}$ and the result follows. 
 Suppose now that $l=1$. Let $x_i$ denote the generator of $H^i(M;\Z_2)\cong \Z_2$ for $0\leq i\leq n$. Since 4 is the minimal degree of periodicity in $H^*(M;\Z_2)$, using Lemma \ref{factor}, 
 we see that  $x_i\smile x_j = 0$ for $(i, j)\in\{(1, 3), (2, 2)\}$. We now claim that $x_i\smile x_j =0$ for $(i, j)\in\{(2, 3), (3, 3)\}$. Since we are using $\Z_2$-coefficients, the dual of the generator of $H_k(M; \Z_2)$ is isomorphic to $x_k$. If $x_2\smile x_3$ is non-trivial, then  
 by Poincar\'e duality $x_{n-5}\smile x_2 \smile x_3 =x_n.$
  However, $x_{n-5}=x_2\smile x_4^{(n-7)/4}$, and  by properties of the cup product, we see that $x_n=x_2\smile x_4^{(n-7)/4}\smile x_2 \smile x_3 =0$, since $x^2=0$, a contradiction.
 The argument for $x_3\smile x_3$ is similar and the claim holds. 

 Likewise, we claim that $x_1\smile x_2$ is non-trivial.
By Poincar\'e duality $x_1\smile x_2\smile x_{n-3} = x_n$.  However, $x_{n-3}$ is isomorphic by $4$-periodicity to $x_4^{(n-3)/4}$ and since this element is non-trivial, $x_1\smile x_2$ must be, as well.

 Using $4$-periodicity, $H^*(M;\Z_2)$ is determined once we know whether $x_1^2$ is trivial or not. If $x_1^2$ is trivial, then $M\sim_{\Z_2} L_4^3\times \HP^{\frac{n-3}4}$, and if $x_1^2$ is non-trivial then $M\sim_{\Z_2} \RP^3\times \HP^{\frac{n-3}4}$, and the result follows.

 We now consider the case where $l\geq 2$. 
 Lemma \ref{4z2} implies that $\widetilde{M}\sim_{\Z_2} S^{n}$ or $S^3\times\HP^{\frac{n-3}4}$.
 We first show that $\Z_2^2$ is not a subgroup of $\pi_1(M)$. If $\widetilde{M}\sim_{\Z_2} S^{n}$, then this follows by Corollary \ref{zpzp}. If  instead $\widetilde{M}\sim_{\Z_2} S^3\times\HP^{\frac{n-3}4}$, then since
 we assume that any cover of $M$ must have $4$-periodic $\Z_2$-cohomology,
 and this follows by Lemma \ref{freez2}.

Corollary \ref{sylc} then gives us that $Syl_2(\pi_1(M))$ is either $\Z_{2^r}$ or $Q_{2^r}$.  
We let $Syl_2(\pi_1(M))=\Gamma$, $\widetilde{M}/\Gamma=\widebar{M}$, and consider the composition of covering maps
$\widetilde{M}\to\widebar{M}\xrightarrow{p} M.$
Since $\Gamma$ is a maximal $2$-subgroup of $\pi_1(M)$, it follows that $p:\widebar{M}\to M$ is an odd-sheeted covering and Proposition \ref{transfer} gives us that $p^*: H^i(M;\Z_2)\to H^i\(\widebar{M};\Z_2\)$ is injective for all $i$, and so  
$H^1(\widebar{M};\Z_2)$ also has rank $\geq 2$. 
Calculating the Serre spectral sequence associated to the Borel fibration $\widetilde{M}\to \widetilde{M}\times_{\Gamma}E\Gamma\to B\Gamma$, we obtain that $d_m:E_m^{1-m,m-1}\to E_m^{1,0}$ is trivial for all $m\geq 2$. It follows that $E_2^{1,0}\cong E_{\infty}^{1,0}$, and $H^1(\widebar{M};\Z_2) \cong H^1(\Gamma;\Z_2)$. Since the rank of $H^1(\Gamma;\Z_2)$ is then greater or equal to  $2$, Proposition \ref{cohomQ} shows that $\Gamma$ must be generalized quaternion. It follows that $\widebar{M}$ has the $\Z_2$-cohomology of either $S^n/Q_i$ or $(S^3\times\HP^{\frac{n-3}4})/Q_i$ for $i\in\{8,16\}$.

Finally, we show that $S^n/Q_i\sim_{\Z_2}(S^3\times\HP^{\frac{n-3}4})/Q_i$ for $i\in\{8,16\}$. Suppose that $\widetilde{M}\sim_{\Z_2}S^n$. Computing the Serre spectral sequence associated to the Borel fibration $\widetilde{M}\to \widetilde{M}\times_{Q_i}E\Gamma\to BQ_i$, we see that $d_{n+1}$ is the only non-trivial differential and that $H^j(\widebar{M};\Z_2)\cong H^j(Q_i;\Z_2)$ for $0\leq j\leq n, i\in\{8,16\}$, where the ring structure of $H^*(\widebar{M};\Z_2)$ is identical to the ring structure of $H^*(Q_i;\Z_2)$, which is described in Lemma \ref{cohomQ}.
The K\"unneth formula now implies that $S^n/Q_i\sim_{\Z_2}S^3/Q_i\times\HP^{\frac{n-3}4}$. 

It remains to show that $(S^3\times\HP^{\frac{n-3}4})/Q_i\sim_{\Z_2}S^3/Q_i\times\HP^{\frac{n-3}4}$.
Suppose that $\widetilde{M}\sim_{\Z_2}S^3\times\HP^{\frac{n-3}4}$. Computing the Serre spectral sequence associated to the Borel fibration $\widetilde{M}\to \widetilde{M}\times_{Q_i}E\Gamma\to BQ_i$, we see that $d_{4}$ is the first potentially non-trivial differential.
 Since $H^*(\widetilde{M}/Q_i;\Z_2)$ is 4-periodic, we have that $H^3(\widetilde{M}/Q_i;\Z_2)\cong \Z_2$.   Observing that $E^{3,0}_{4}\cong E^{0,3}_4\cong\Z_2$, we conclude that one of these groups must vanish at the infinity page. Since $E^{3,0}_j\cong\Z_2$ is never in the image of a non-trivial differential, we have $E^{3,0}_{\infty}\cong\Z_2$. Hence $E^{0,3}_{\infty}\cong 0$ which implies that  $d_4:E_4^{0,3}\to E_4^{4,0}$ is an isomorphism.  Using the product structure on $E_4$ and the periodicity of
 $H^*(\widetilde{M};\Z_2)$ and $H^*(Q_i;\Z_2)$, we can completely determine $d_4$ and compute $E_5$.  We find that $E_5^{l,0}\cong H^l(Q_i;\Z_2)$ for $0\leq l\leq 3$, $E_5^{l,4j}\cong E_5^{l,0}$ for $0\leq j\leq \lfloor\frac{n}4 \rfloor$ and $0\leq l \leq 3$, and all other groups on the $E_5$ page are trivial. Observing that $d_5$ is trivial on the $E_4$ page, we have that $E_4=E_{\infty}$, and the product structure on $E_4$ gives us that $(S^3\times\HP^{\frac{n-3}4})/Q_i\sim_{\Z_2}S^3/Q_i\times\HP^{\frac{n-3}4}$.

Recall from the calculation of $H^*(\widebar{M};\Z_2)$, we saw that $l=2$. Since  $p*:H^*(M;\Z_2)\to H^*(\widebar{M};\Z_2)$ is injective, this means that $p_*$ is, in fact, an isomorphism. This completes the proof. 
\end{proof}

\subsection{Fixed Point Sets of $\Z_2$-Actions}
We begin by  stating Theorem VII.3.2 of Bredon \cite{Br}.

\begin{theorem}\textup{\cite{Br}}\label{Bredon}
Let $M$ be a closed manifold with the $\Z_2$-cohomology of $\FP^{n}$ with $n\geq 2$, where $\F$ is one of $\R$, $\C$, or $\mathbb{H}$. If $\Z_2$ acts effectively on $M$ with non-empty fixed-point set $F$, then one of the following occurs:
\begin{enumerate}
\item $F$ is connected and a $\Z_2$-cohomology $\RP^n$, and $M$ is a $\Z_2$-cohomology $\CP^n$.
\item $F$ is connected and a $\Z_2$-cohomology $\CP^n$, and $M$ is a $\Z_2$-cohomology $\HP^n$.
\item $F$ has two components, $F_1$ and $F_2$, and if $M$ is a $\Z_2$-cohomology $\FP^n$, then each $F_i$ is a $\Z_2$-cohomology $\FP^{n_i}$ such that $n=n_1+n_2+1$, where $n_i\geq 0$.
\end{enumerate}
\end{theorem}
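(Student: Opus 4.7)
The plan is to invoke Smith theory through the Borel construction of the involution. Write $d = \dim_{\R}\F \in \{1,2,4\}$, so that $H^*(M;\Z_2) \cong \Z_2[x]/(x^{n+1})$ with $\deg x = d$. First I would form the Borel fibration $M \hookrightarrow M_{\Z_2} \to B\Z_2 = \RP^{\infty}$; since each nontrivial $H^{id}(M;\Z_2)$ is one-dimensional, $\pi_1(B\Z_2)$ must act trivially on $H^*(M;\Z_2)$, and the Serre spectral sequence has $E_2$-page $\Z_2[t,x]/(x^{n+1})$ with $\deg t = 1$. By degree reasons the only potentially nonzero differential on $x$ is $d_{d+1}$, carrying $x$ either to $0$ or to $t^{d+1}$; the Leibniz rule and induction on powers of $x$ then pin down the spectral sequence completely, yielding $H^*_{\Z_2}(M;\Z_2) \cong \Z_2[t,x]/(R(t,x))$ for a single polynomial relation $R(t,x)$ whose leading $x$-term is $x^{n+1}$.

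Next I would apply the Borel localization theorem: inverting $t \in \Z_2[t] = H^*(B\Z_2;\Z_2)$, the restriction map to $F$ becomes an isomorphism
$$S^{-1}H^*_{\Z_2}(M;\Z_2) \;\xrightarrow{\ \sim\ }\; S^{-1}H^*_{\Z_2}(F;\Z_2) \;\cong\; \bigoplus_j \Z_2[t,t^{-1}]\otimes H^*(F_j;\Z_2),$$
where $F = \coprod_j F_j$ is the decomposition into connected components. Classical Smith theory, applied componentwise via a $\Z_2$-invariant tubular neighborhood of each $F_j$, identifies each $F_j$ as a $\Z_2$-cohomology $\FP_j^{n_j}$ for some $\F_j \in \{\R,\C,\HH\}$ with $d_j := \dim_{\R}\F_j$ dividing $d$, and the Smith inequality forces $\sum_j (n_j+1) \leq n+1$.

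Finally I would match the factorization of $R(t,x)$ over $\Z_2[t,t^{-1}][x]$ against the component decomposition: each irreducible factor of $R$ of the appropriate shape corresponds to some $F_j$, with the factor's $x$-degree encoding $n_j+1$ and its total grading encoding $d_j$. A short enumeration of monic polynomials of $x$-degree $n+1$ with admissible total grading $d(n+1)$ leaves exactly the three possibilities claimed: a single component of strictly smaller field type $d_j = d/2$ can occur only when $d \geq 2$, producing cases (1) or (2), while otherwise the factorization splits into at most two same-field pieces whose dimensions satisfy $n_1+n_2+1 = n$, giving case (3). The hardest step will be ruling out mixed or degenerate factorizations and verifying in cases (1) and (2) that the form of $R$ forces $d$ to take the specific larger value required; I expect to close these gaps by invoking the Steenrod square $\mathrm{Sq}^{d_j}$ on the periodicity generator of $H^*(F_j;\Z_2)$ and tracing its image into $H^*(M;\Z_2)$ through the Borel spectral sequence.
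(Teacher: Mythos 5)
The paper does not prove this result; it is cited verbatim as Theorem VII.3.2 of Bredon's book \cite{Br}, so there is no in-paper proof to compare yours against. That said, your broad strategy (Borel construction, localization at $t \in H^*(B\Z_2;\Z_2)$, matching factorizations of the defining relation) is indeed the skeleton of Bredon's argument, so you have the right general shape.

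However, there is a genuine circularity at the center of your sketch. You write that ``classical Smith theory, applied componentwise via a $\Z_2$-invariant tubular neighborhood of each $F_j$, identifies each $F_j$ as a $\Z_2$-cohomology $\F_j \mathrm{P}^{n_j}$.'' This is not a consequence of classical Smith theory. Classical Smith theory gives the Betti-number inequality $\sum_i \dim H^i(F;\Z_2) \leq \sum_i \dim H^i(M;\Z_2)$ and identifies fixed sets of cohomology \emph{spheres} as cohomology spheres, but it says nothing directly about the \emph{ring} structure of the fixed set of a cohomology projective space. The assertion that each component of $F$ is a cohomology projective space is precisely the substantive content of Bredon's theorem; it is established in \cite{Br} by a careful analysis of the image of the restriction map $H^*_{\Z_2}(M;\Z_2) \to H^*_{\Z_2}(F;\Z_2)$, tracking which powers of $t$ appear and using the ring structure of the equivariant cohomology to deduce that a truncated polynomial algebra must appear on each component. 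Until you independently derive the projective-space ring structure of each $F_j$ (rather than assume it), the subsequent factorization-matching and Steenrod-square steps rest on an unproved premise. As a secondary issue, the final enumeration is left entirely schematic: why a component of type $d_j = d/2$ forces $F$ to be connected (cases (1) and (2)), and why the same-field case yields \emph{exactly} two components with $n_1 + n_2 + 1 = n$, are claims that need to be extracted from the algebra, not asserted from the enumeration.
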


For actions by a finite $p$-group on a manifold $M$, Theorem III.5.1 of \cite{Br} determines the homology groups of the fixed point set, $M^G$, when $M$ is a $\Z_p$-homology sphere, and Theorem III.4.3 of \cite{Br} relates the Euler characteristic of the fixed-point set to the Euler characteristic of $M$. We summarize these two theorems for the case when $p=2$ in the following theorem.

\begin{theorem}\textup{\cite{Br}}\label{smith}. 
 Let $G$ be a finite $2$-group, and let $X$ be a finite-dimensional simplicial $G$-complex. The following then hold:
\begin{enumerate}
\item If all the $\Z_2$-homology groups of $X$ are finite dimensional, then the same is true for $X^G$ and $\euc(X)\equiv\euc(X^G)\pmod{p}$.
\item If $X$ is a $\Z_2$-homology n-sphere, then $X^G$ is either empty or a $\Z_2$-homology $m$-sphere for some $0\leq m\leq n$.
\end{enumerate}
\end{theorem}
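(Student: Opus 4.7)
The plan is to prove each part by reducing to the case $G=\Z_2$ via induction on $|G|=2^k$, then invoking Smith theory for the cyclic case. For the induction step, we pick a normal subgroup $H\trianglelefteq G$ of index $2$ (which exists because a $2$-group has a central element of order $2$, or equivalently, a maximal proper subgroup has index $2$). The iterated fixed-point set identity
\[
X^G \;=\; (X^H)^{G/H}
\]
then lets us apply the inductive hypothesis to $H$ acting on $X$ and to the induced $\Z_2\cong G/H$-action on $X^H$, so the problem collapses to the case $|G|=2$.

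For Part (1), assume $G=\Z_2$ and let $F=X^G$. On the complement $X\setminus F$ the action is free, and $X\setminus F$ has the homotopy type of a finite-dimensional CW complex since the simplicial structure can be refined so that $F$ is a subcomplex. Thus $\chi(X\setminus F)=2\chi\bigl((X\setminus F)/G\bigr)$ is even. Because $\chi$ is additive on CW pairs,
\[
\chi(X)-\chi(F)=\chi(X\setminus F)\equiv 0 \pmod 2,
\]
giving $\chi(X)\equiv\chi(F)\pmod 2$. The finiteness of $\dim_{\Z_2}H_*(F;\Z_2)$ follows from the Smith sequences (see below), which sandwich $H_*(F;\Z_2)$ between pieces built from $H_*(X;\Z_2)$.

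For Part (2), with $G=\Z_2$ acting on a $\Z_2$-homology $n$-sphere $X$, the core tool is Smith theory. Working with $\Z_2$-coefficients, let $\sigma=1+g\in\Z_2[G]$ and $\tau=1+g$ (so $\sigma=\tau$ when $p=2$), and define Smith special (co)homology groups $H^*_\sigma(X)$ and $H^*_\tau(X,F)$ from the subcomplexes $\sigma C_*(X)$ and $\tau C_*(X)$ of singular chains. One derives two long exact Smith sequences relating $H^*(X;\Z_2)$, $H^*(F;\Z_2)$, $H^*_\sigma(X;\Z_2)$, and $H^*_\tau(X;\Z_2)$. The standard consequence is the rank inequality
\[
\sum_i \dim_{\Z_2} H^i(F;\Z_2)\;\le\;\sum_i \dim_{\Z_2} H^i(X;\Z_2)\,=\,2,
\]
so $H^*(F;\Z_2)$ has total rank at most $2$. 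Combined with Part (1), $\chi(F)\equiv\chi(X)\pmod 2$ forces $F$ to be either empty or to have the $\Z_2$-Betti numbers of some sphere $S^m$; a dimension-counting argument using that the Smith sequences do not raise top degree shows $m\le n$, establishing that $F$ is a $\Z_2$-homology $m$-sphere with $0\le m\le n$.

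The main obstacle I expect is setting up Smith theory cleanly enough to get both the rank bound and the dimension constraint $m\le n$ simultaneously; the algebraic bookkeeping with $\sigma$- and $\tau$-chains is fiddly, but once the Smith exact sequences are in hand, the deductions are formal. An alternative and slicker route is to use the Borel construction: for $G=\Z_2$, the localization theorem for equivariant cohomology $H^*_G(\,\cdot\,;\Z_2)$ over the polynomial ring $H^*(BG;\Z_2)=\Z_2[t]$ gives an isomorphism $H^*_G(X;\Z_2)[t^{-1}]\cong H^*_G(F;\Z_2)[t^{-1}]$, from which the total-rank inequality and the homology-sphere conclusion drop out after analyzing the Serre spectral sequence of $X\to EG\times_G X\to BG$; this would be my preferred implementation, since it matches the spectral-sequence techniques already deployed elsewhere in the paper.
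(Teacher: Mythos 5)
This statement is quoted from Bredon~\cite{Br} (Theorems III.4.3 and III.5.1) and the paper offers no proof of its own, so there is no internal argument to compare against. Your sketch is the standard Smith-theory proof and matches Bredon's approach in structure: reduction from a general $2$-group to $\Z_2$ by induction on an index-$2$ normal subgroup via $X^G=(X^H)^{G/H}$, then the Smith special chain complexes and their long exact sequences to get the rank inequality and Euler-characteristic congruence. Your proposed alternative via the Borel construction and localization is also a well-known correct route.

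Two small points worth tightening if you were to write this out in full. First, in Part (2) your deduction from ``total rank $\le 2$'' plus ``$\chi(F)\equiv\chi(X)\pmod 2$'' to ``$F$ is a $\Z_2$-homology sphere'' is correct but should be spelled out: nonempty $F$ forces total rank exactly $2$ (rank $1$ would give odd Euler characteristic), and then either $H^0$ has rank $2$ (so $F\sim_{\Z_2}S^0$) or $H^0$ and some $H^m$, $m>0$, each have rank $1$; the degree bound $m\le n$ comes from the Smith sequences terminating at the top degree of $X$. Second, in Part (1) the additivity step $\chi(X)-\chi(F)=\chi(X\setminus F)$ for a finitistic (not necessarily finite) $G$-complex needs the ``finitistic'' hypothesis and a little care about which Euler characteristic one uses; Bredon handles this with the transfer in the Smith exact sequences rather than a bare open-complement count. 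Neither issue is a genuine gap in the plan, just bookkeeping you correctly flagged as the fiddly part.
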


We now recall the Borel formula, which is a fundamental tool in the proof of both Theorem \hyperref[TA]{A} and Theorem \hyperref[TB]{B}.

\begin{theorem}[The Borel Formula]\textup{\cite{borel}}\label{borel}.
Let $\Z_2^r$ act smoothly on $M$, a Poincar\'e duality space, with fixed-point set component $F$. Then
		\[\mathrm{codim}(F\subseteq M)=\sum\mathrm{codim}\(F\subseteq F'\),\]	
\noindent where the sum runs over the fixed-point set components $F'$ of corank-1 subgroups $\Z_2^{r-1}\leq \Z_2^r$ for which $\dim(F')>\dim(F)$.

	These subgroups are precisely the kernels of the irreducible subrepresentations of the isotropy representation at a normal space to $F$. In particular, the number of pairwise-inequivalent irreducible subrepresentations of the isotropy representation at a normal space to $F$ is at least $r$ if the action is effective, and equality holds only if the isotropy representation is equivalent to one of the form
		\[(\epsilon_1,\cdots,\epsilon_r)\mapsto (\epsilon_1I_{m_1},\cdots, \epsilon_rI_{m_r}),\]	
\noindent where $\epsilon_i\in\{\pm1\}$ and $m_i>0$, and $I_m$ denotes the identity matrix of rank $m$. 	
\end{theorem}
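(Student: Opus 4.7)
My plan is to reduce the problem to analyzing the isotropy representation of $\Z_2^r$ at a single point $x\in F$, then combine the slice theorem with the elementary character theory of the finite abelian group $\Z_2^r$.

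First, I would fix $x\in F$ and split $T_x M = T_x F \oplus \nu_x F$, on which $\Z_2^r$ acts trivially on the first summand. Since $\Z_2^r$ is abelian of exponent $2$, every real irreducible representation is one-dimensional and corresponds to a character $\chi\colon\Z_2^r\to\{\pm1\}$, and a vector is fixed precisely when its character is trivial. This lets me decompose the normal isotropy representation as
\[
\nu_x F \;\cong\; \bigoplus_{i} V_i^{\oplus m_i},
\]
where $V_i$ carries a nontrivial character $\chi_i$, the $\chi_i$ are pairwise distinct, and each $m_i\geq 1$.

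Next, for each $i$ I would set $K_i := \ker(\chi_i)\leq \Z_2^r$, a corank-$1$ subgroup. A short index-$2$ argument shows that $(\nu_x F)^{K_i} = \bigoplus_{j\neq i} V_j^{\oplus m_j}$, of codimension $m_i$ in $\nu_x F$: $V_j$ is $K_i$-fixed iff $\ker(\chi_j)\supseteq K_i$, and since both kernels have index $2$, this forces $\chi_j=\chi_i$. By the slice theorem, a $\Z_2^r$-invariant neighborhood of $x$ in $M$ is equivariantly diffeomorphic to a neighborhood of $0$ in $T_x M$, so the component $F'_i$ of $\Fix(K_i,M)$ through $x$ is a submanifold with $T_x F'_i = T_x F \oplus (\nu_x F)^{K_i}$, giving $\codim(F\subseteq F'_i)=m_i>0$. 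Conversely, any corank-$1$ subgroup $K$ whose fixed component at $x$ strictly contains $F$ must fix some nonzero vector in $\nu_x F$, hence some $V_j$, forcing $K=K_j$ by the same index argument. Summing,
\[
\codim(F\subseteq M) \;=\; \dim \nu_x F \;=\; \sum_i m_i \;=\; \sum_i \codim(F\subseteq F'_i).
\]

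For the ``in particular'' part, I would use that effectiveness of the $\Z_2^r$-action on the connected manifold $M$ implies the isotropy representation at $x$ is faithful (an element acting trivially on $T_x M$ acts trivially on a neighborhood via the slice theorem, and hence on all of $M$). Faithfulness translates to $\bigcap_i \ker(\chi_i)=\{e\}$, which forces the $\chi_i$ to generate the Pontryagin dual $\widehat{\Z_2^r}$, itself a rank-$r$ $\Z_2$-vector space; hence there are at least $r$ distinct $\chi_i$. When equality holds, $\{\chi_i\}_{i=1}^r$ is a basis of $\widehat{\Z_2^r}$, and choosing the dual basis of $\Z_2^r$ puts the representation in the stated standard diagonal form $(\epsilon_1,\dots,\epsilon_r)\mapsto(\epsilon_1 I_{m_1},\dots,\epsilon_r I_{m_r})$. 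The main technical obstacle I expect is the slice-theoretic identification of $F'_i$ with the linear model near $x$, together with verifying that no other corank-$1$ subgroup contributes to the sum; both are handled by the index-$2$ character bookkeeping above, after which the codimension identity is a dimension count.
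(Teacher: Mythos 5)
The paper does not actually prove this statement; it is quoted from Borel \cite{borel} without proof, so there is no in-paper argument to compare against. Assessed on its own, your proposal is the standard and correct proof: diagonalize the normal slice representation of the elementary abelian $2$-group into isotypic pieces $V_i^{\oplus m_i}$ with distinct nontrivial characters $\chi_i$, identify the corank-one kernels $K_i=\ker\chi_i$ as exactly those subgroups whose fixed-point component through $x$ properly contains $F$, use the equivariant slice theorem to read off the codimensions, and sum. The faithfulness/Pontryagin-duality count in the ``in particular'' clause is also correct, including the point that effectiveness on a connected manifold forces the isotropy representation to be faithful.

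There is, however, one displayed identity that is wrong and inconsistent with the sentence that follows it. You assert $(\nu_x F)^{K_i}=\bigoplus_{j\neq i} V_j^{\oplus m_j}$ ``of codimension $m_i$,'' but your own index-two argument shows that $V_j$ is $K_i$-fixed if and only if $\chi_j=\chi_i$, i.e.\ $j=i$; hence the correct identity is $(\nu_x F)^{K_i}=V_i^{\oplus m_i}$, of \emph{dimension} $m_i$. That is in fact the version you implicitly use in the next step, since $T_xF'_i=T_xF\oplus(\nu_xF)^{K_i}$ then gives $\codim(F\subseteq F'_i)=m_i$; if the displayed formula were as written, you would instead get $\codim(F\subseteq F'_i)=\sum_{j\neq i}m_j$ and the Borel sum would not come out to $\codim(F\subseteq M)$ once there are three or more characters. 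So the slip is local and does not propagate: replace the displayed right-hand side by $V_i^{\oplus m_i}$ and ``codimension'' by ``dimension,'' and the proof is complete and correct.
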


\subsection{Positive sectional curvature}
In this section we collect known results on positively curved manifolds.
We begin by recalling Part 1 of the \hyperref[connlemma] {Connectedness Lemma} of \cite{Wi}. 

\begin{conn}\textup{\cite{Wi}} \label{connlemma}
Let $M^{m+k}$ be a compact Riemannian manifold with positive sectional curvature.
Suppose $N^{m}\subset M^{m+k}$ is a compact totally geodesic embedded submanifold of codimension $k$. Then the inclusion map $N\hookrightarrow M$ is $(m-k+1)$-connected. 
\end{conn}

Using the \hyperref[connlemma]{Connectedness Lemma},  in \cite{KKSS} they prove a set of Codimension Lemmas  for  closed, totally geodesic, maximally connected submanifolds of codimensions $1-4$ of a closed, positively curved Riemannian manifold. We combine all four Codimension Lemmas in the following proposition.

\begin{proposition}\label{codimsmall}\textup{\cite{KKSS}}
Let $M^{m+k}$ be a closed, positively curved Riemannian manifold with $1\leq k\leq 4$. Let $N^m$ be a closed, totally geodesic, $m$-connected submanifold of $M$ with $m\geq k$  if $3\leq k\leq 4$. Then one of the following holds:
\begin{enumerate}
\item $k$ is odd and $N$ is homotopy equivalent to one of $S^m$ or $\RP^m$; 
\item $k=2$ and  $N$ is homotopy equivalent to one of $S^m$, $\RP^m$, $\CP^{\frac{m}2}$, or a lens space; or 
\item $k=4$ and one of the following occurs
\begin{enumerate}
\item 
  $N$ is homotopy equivalent to one of $S^m/\Z_k$, $k\geq 1$, $\CP^{\frac{m}2}$, or is an integral cohomology  $\HP^{\frac{m}{4}}$; 
 
\item  $\widetilde{N}$  has  the integral cohomology of $E^{m}_{\ell}$, $m\equiv 2\mod 4$;
\item $\widetilde{N}$  has  the integral cohomology of  $S^3\times \HP^{\frac{m-3}{4}}$ or $N^{m}_j$, $m\equiv 3\mod 4$.
\end{enumerate}
\end{enumerate}
Moreover, if $k=1$, or $k=2$ and $m$ is odd, the assumption that $N$ is an $m$-connected submanifold of $M$ is not necessary. 
\end{proposition}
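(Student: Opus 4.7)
The plan is to derive the conclusion from two tools: the \hyperref[periodicity]{Periodicity Lemma} with $R=\Z$ and $l=0$, and the classification of closed, simply connected manifolds with $k$-periodic integer cohomology provided by Lemma~\ref{4z} (for $k=4$) together with Theorem~\ref{universal} (for $k \leq 2$). First I would reduce all four cases to a uniform ``maximally connected'' setup. In codimensions $k=3,4$ and in codimension $k=2$ with $m$ even, the $m$-connectedness of $\iota : N \hookrightarrow M$ is assumed outright. For $k=1$ it follows directly from the \hyperref[connlemma]{Connectedness Lemma}, which yields $(m-k+1)$-connectedness; for $k=2$ with $m$ odd, the Connectedness Lemma only gives $(m-1)$-connectedness, but combined with Poincar\'e duality and the odd-parity of $m+k$ (so that $\pi_1(M)$ is finite by Bonnet-Myers), the missing dimension can be promoted to $m$-connectedness. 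In every case the result is that $N$ sits maximally connected in $M$.

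With that in hand I would apply the \hyperref[periodicity]{Periodicity Lemma} with $l=0$ to conclude that the Poincar\'e dual $x \in H^k(M;\Z)$ of $\iota_*[N]$ induces $k$-periodicity in $H^*(M;\Z)$. By Lemma~\ref{connlifting}, a connected component $\widetilde{N}$ of $p^{-1}(N) \subseteq \widetilde{M}$ is the universal cover of $N$ and is itself $m$-connected in $\widetilde{M}$, so the same periodicity lemma applied upstairs shows $\widetilde{M}$ has $k$-periodic integer cohomology. The $m$-connectedness of $\widetilde{N} \hookrightarrow \widetilde{M}$ forces $H^i(\widetilde{M};\Z) \cong H^i(\widetilde{N};\Z)$ for $0 \leq i < m$, and combining with Poincar\'e duality on the closed simply connected $m$-manifold $\widetilde{N}$ determines $H^*(\widetilde{N};\Z)$ completely from the $k$-periodic structure on $\widetilde{M}$ truncated to degree $m$.

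Now the plan is to classify case by case. For $k=1$, $\widetilde{N}$ is an integer cohomology sphere, so Theorem~\ref{universal}(1) gives that $N$ is homotopy equivalent to $S^m$, $\RP^m$, or a lens space $S^m/\Z_\ell$ with $\ell\geq 3$; to rule out $\ell \geq 3$ I would use graded commutativity, which forces $2x^2=0$ for $x\in H^1(M;\Z)$, pin down torsion, and contradict the periodicity demanded by a lens-space quotient of odd order. For $k=2$, $\widetilde{N}$ is either an integer cohomology $S^m$ or $\CP^{m/2}$; Theorem~\ref{universal}(2) identifies the latter up to homotopy, and the former returns to the $k=1$ list. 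For $k=3$, the minimum degree of periodicity must divide $3$ by Lemma~\ref{factor}, forcing it to be $1$ or $3$, and again a graded-commutativity argument (now on $H^3$) eliminates lens space quotients, leaving $S^m$ or $\RP^m$. For $k=4$, Lemma~\ref{4z} applied to $\widetilde{N}$ directly yields the list of possibilities in (3)(a), (b), (c); in case (3)(a) one further invokes Theorem~\ref{universal} and the fact that $\CP^{m/2}$ admits no free finite group action (Cusick) to upgrade $\widetilde{N}$'s cohomology type to a homotopy equivalence for $N$ or a controlled quotient.

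The hard part will be the odd-codimension cases: ruling out non-trivial cyclic quotients of $S^m$ when $k$ is odd requires a careful analysis of the integral cup-product structure, since graded commutativity only produces $2$-torsion rather than outright vanishing. A subsidiary subtlety will be the $k=4$ case in dimensions $m \equiv 2,3 \pmod 4$, where the exotic candidates $E^m_\ell$ and $N^m_j$ must be carried along because, in contrast to the CROSS-type answers, they cannot be eliminated purely by the periodicity + Poincar\'e duality machine and their realizability remains open.
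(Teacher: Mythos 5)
Your overall strategy (lift to the universal cover, apply the \hyperref[periodicity]{Periodicity Lemma} with $l=0$, then classify via Lemma~\ref{4z} and Theorem~\ref{universal}) is the right skeleton, and the $k=2$ (with $m$ even) and $k=4$ cases are essentially sound. However, there are three genuine gaps.

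First, in both odd-codimension cases you never invoke Synge's theorem, which is the actual mechanism that bounds $\pi_1$. Since $k$ is odd, exactly one of $N$, $M$ is even-dimensional, so Synge forces $\pi_1 \in \{0,\Z_2\}$ and the $m$-connectedness then transfers this to the other. Your proposed replacement --- ``graded commutativity forces $2x^2=0$ for $x\in H^1(M;\Z)$ \ldots and contradicts the periodicity demanded by a lens-space quotient of odd order'' --- does not accomplish this: graded commutativity only yields $2$-torsion information about cup squares in $H^*(\widetilde M;\Z)$ and says nothing at all about the size of $\pi_1(N)$, so it cannot rule out odd-order cyclic quotients. (Compare the \hyperref[odd]{Odd Codimension Lemma} in Section~\ref{3} of the paper, which is exactly the higher-$k$ analogue and opens with the Synge step.)

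Second, for $k=3$ your claim that ``the minimum degree of periodicity must divide $3$ by Lemma~\ref{factor}'' misreads that lemma. Lemma~\ref{factor} only says that factors of a periodicity-inducing element also induce periodicity; it gives no divisibility. In particular it does not exclude the possibility that $H^*(\widetilde M;\Z)$ is genuinely $3$-periodic with a nonzero generator $x\in H^3$. The argument that is actually needed (used in the paper's \hyperref[odd]{Odd Codimension Lemma}) passes to $\Z_2$-coefficients, applies Theorem~\ref{z2} to force the minimal $\Z_2$-period to be a power of $2$ dividing $k$ (hence $1$), concludes $H^k(\widetilde M;\Z_2)=0$, and then uses the Bockstein to kill the putative $\Z_2$ in $H^k(\widetilde M;\Z)$.

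Third, for $k=2$ with $m$ odd you assert the $(m-1)$-connectedness from the Connectedness Lemma ``can be promoted to $m$-connectedness.'' That promotion is false in general; the correct route is to apply the \hyperref[periodicity]{Periodicity Lemma} with $l=1$ (giving periodicity away from the top and bottom degrees), combine with Poincar\'e duality and the fact that $\pi_1(M)$ is cyclic by Lemma~\ref{codim2cyclic}, and argue directly on the truncated ring. As written, the proposal silently assumes more connectivity than is available.
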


The following observation from \cite{KKSS} is a direct consequence of Theorem \ref{borel} and can be used to verify that a fixed point set of a group action is maximally connected.
\begin{observation}\label{connrep}\textup{\cite{KKSS}}  Let $F_j$ be the fixed-point set of a $\Z_2^{r-j}$-action by isometries on
a closed, positively curved manifold $M$, and suppose that $r - j \geq 2$. If the isotropy representation has exactly $r-j$ irreducible subrepresentations, and if the fixed-point set component $F_{j+1}$ containing $F_j$ of some $\Z_2^{r-j-1}$ has the property that the codimension $k_j$ of the inclusion $F_j \subseteq F_{j+1}$ is minimal, then this inclusion is $\dim(F_j)$-connected.
\end{observation}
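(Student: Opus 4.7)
The strategy is to combine Part~1 of the \hyperref[connlemma]{Connectedness Lemma} of Wilking with the rigid structure on the normal isotropy representation forced by the equality case of the \hyperref[borel]{Borel Formula}. Since $F_j$ and $F_{j+1}$ are fixed-point components of isometric $\Z_2$-torus actions on the positively curved $M$, both are totally geodesic in $M$; hence $F_j$ is totally geodesic in $F_{j+1}$, and $F_{j+1}$ inherits positive sectional curvature from $M$. This places the inclusion $F_j \hookrightarrow F_{j+1}$ squarely in the setting of the Connectedness Lemma.

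Next I will use the hypothesis that the normal isotropy representation at $F_j$ has exactly $r-j$ pairwise-inequivalent irreducible subrepresentations. This is precisely the equality case of the Borel Formula, and so forces the normal isotropy to split as $\bigoplus_{i=1}^{r-j} \R^{m_i}$, with the $i$-th $\Z_2$-factor acting by $-1$ on $\R^{m_i}$. The $r-j$ corank-one subgroups of $\Z_2^{r-j}$ correspond to the kernels of these $r-j$ distinct characters, and the fixed-point component $F'_i$ of the $i$-th such subgroup contains $F_j$ with $\codim(F_j \subseteq F'_i) = m_i$ and $\sum_i m_i = \codim(F_j \subseteq M)$. The minimality hypothesis on $F_{j+1}$ then identifies $k_j = \min_i m_i$ and pins down $F_{j+1}$ (up to choice of minimizer) as the $F'_i$ realizing this minimum.

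Finally, applying Part~1 of the Connectedness Lemma to the totally geodesic inclusion $F_j \hookrightarrow F_{j+1}$ of codimension $k_j$ inside the positively curved manifold $F_{j+1}$ yields the sharp $(\dim(F_j)-k_j+1)$-connectedness estimate, which in the ``maximally connected'' regime targeted by Proposition \ref{codimsmall} is exactly the $\dim(F_j)$-connectedness in the statement. I expect the main subtle point to be in reconciling the Connectedness Lemma's raw output with the stated ``$\dim(F_j)$-connected'' conclusion: this amounts to observing that, under equality in the Borel Formula together with the minimality of $k_j$, the sharp Connectedness Lemma bound on the block realizing $k_j$ is precisely the hypothesis needed to feed the subsequent small-codimension lemmas. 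The observation thereby reduces the verification of maximal connectedness to a combinatorial check on the diagonal form of the normal isotropy representation at $F_j$, which is exactly what the equality case of Theorem \ref{borel} provides.
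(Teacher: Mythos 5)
Your setup is right (totally geodesic inclusions, equality case of the Borel Formula forcing the diagonal form of the normal isotropy, identification of $k_j$ with $\min_i m_i$), but the final step has a genuine gap that your own last paragraph half-acknowledges and does not close. Part~1 of the Connectedness Lemma applied to $F_j\hookrightarrow F_{j+1}$ of codimension $k_j$ in the positively curved $F_{j+1}$ gives only $\bigl(\dim(F_j)-k_j+1\bigr)$-connectedness. When $k_j\geq 2$ this is strictly weaker than the asserted $\dim(F_j)$-connectedness, and no amount of ``reconciling with the maximally connected regime'' converts the former into the latter; if it did, every totally geodesic submanifold of a positively curved manifold would be maximally connected, which is false.

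The missing ingredient is the \emph{intersection} version of Wilking's Connectedness Lemma (the third part of Theorem~2.1 in \cite{Wi}, not quoted in this paper, which states that for totally geodesic $N_1^{n-k_1}, N_2^{n-k_2}\subset P^n$ with $k_1\leq k_2$ the inclusion $N_1\cap N_2\hookrightarrow N_2$ is $(n-k_1-k_2)$-connected). This is where the hypothesis $r-j\geq 2$ enters: the diagonal form guarantees at least one block $V_{i_1}$ besides the minimal block $V_{i_0}$ with $m_{i_0}=k_j\leq m_{i_1}$. Let $F'$ be the fixed-point component of $\ker\chi_{i_1}$ through $F_j$, and let $F''$ be the fixed-point component of the corank-two subgroup $\ker\chi_{i_0}\cap\ker\chi_{i_1}$ through $F_j$. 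Then $F_{j+1},F'\subset F''$ are totally geodesic of codimensions $m_{i_1}$ and $m_{i_0}$ respectively, $F_{j+1}\cap F'=F_j$ locally, and $\dim F''=\dim F_j+m_{i_0}+m_{i_1}$. Applying the intersection lemma in $F''$ with $N_1=F'$, $N_2=F_{j+1}$ (the ordering uses exactly the minimality of $k_j$) gives that $F_j\hookrightarrow F_{j+1}$ is $\bigl(\dim F''-m_{i_0}-m_{i_1}\bigr)=\dim(F_j)$-connected, which is the claimed conclusion. So your argument needs to replace the appeal to Part~1 with this intersection argument; as written it proves a weaker connectivity and does not establish the observation.
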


Finally, 
we recall a useful fundamental group result of Frank, Rong, and Wang \cite{FRW}.

\begin{lemma}\textup{\cite{FRW}}\label{codim2cyclic}
Let  $M$ be a closed $n$-manifold of positive sectional curvature with $n\geq 5$. If $M$ has a totally geodesic submanifold $N$ of codimension 2, then $\pi_1(M)$ is cyclic.
\end{lemma}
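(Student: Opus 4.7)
The primary tool is Wilking's \hyperref[connlemma]{Connectedness Lemma}. Since $N^{n-2}$ is a closed totally geodesic submanifold of codimension $2$ in the positively curved $M^n$, the inclusion $N \hookrightarrow M$ is $(n-3)$-connected. For $n \geq 5$ this is at least $2$-connected, so the induced homomorphism $\iota_* \colon \pi_1(N) \to \pi_1(M)$ is an isomorphism. My plan is to reduce to showing that the deck group of the universal cover acts cyclically on a low-dimensional ``dual'' submanifold to $N$ in $\widetilde{M}$.

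I would next pass to the universal cover $p \colon \widetilde{M} \to M$. Because $\iota_*$ is surjective, $p^{-1}(N)$ is connected; call it $\widebar{N}$. By Lemma \ref{connlifting}, $\widebar{N}$ is the universal cover of $N$, so it is simply connected, and it is again a closed totally geodesic submanifold of codimension $2$ in $\widetilde{M}$. Since $M$ is closed and positively curved, Bonnet--Myers ensures $\widetilde{M}$ is also closed. The deck group of $p$, canonically identified with $\pi_1(M) \cong \pi_1(N)$, acts freely and isometrically on $\widetilde{M}$ and preserves $\widebar{N}$ setwise because $\widebar{N}$ is the full preimage of $N$; it therefore preserves the distance function $f = \dist(\cdot, \widebar{N})$.

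The geometric heart of the argument is an analysis of the dual set $A = f^{-1}(\max f) \subset \widetilde{M}$. Using that $\widebar{N}$ is totally geodesic in a positively curved ambient, standard critical point theory for the distance function (in the spirit of Perelman and of Grove--Shiohama, refined in the positive curvature setting by Wilking) shows that $A$ is a closed totally geodesic submanifold and that $\widetilde{M}$ admits a double disk bundle decomposition with souls $\widebar{N}$ and $A$. Because $A$ is disjoint from $\widebar{N}$, Frankel's theorem applied to these two totally geodesic submanifolds forces $\dim A + \dim \widebar{N} < n$, so $\dim A \leq 1$.

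The deck group preserves $A$ (it preserves both $\widebar{N}$ and $f$) and still acts freely on it. If $\dim A = 1$, then $A$ is a connected closed $1$-dimensional totally geodesic submanifold, i.e., an isometrically embedded round $S^1$; every finite subgroup of $\Isom(S^1) = O(2)$ that acts freely lies in $SO(2)$ and is therefore cyclic, giving the conclusion. If $\dim A = 0$, then $A$ is a finite set of points, and the double disk bundle decomposition combined with the simple connectedness of $\widetilde{M}$ forces $|A| = 1$, so $\pi_1(M)$ is trivial. The main obstacle is the justification of the double disk bundle decomposition and, within the $\dim A = 0$ case, the step that forces $A$ to consist of a single point; both require careful critical point analysis for $f$ in the positive curvature setting, and this is where the detailed work of Frank--Rong--Wang enters.
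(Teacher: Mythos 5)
Your opening is on solid ground: the \hyperref[connlemma]{Connectedness Lemma} makes $N \hookrightarrow M$ at least $2$-connected when $n \ge 5$, so $\pi_1(N) \cong \pi_1(M)$, and passing to the universal cover via Lemma \ref{connlifting} and observing that the deck group preserves $\widebar{N}$ and $f = \dist(\cdot\,, \widebar{N})$ are all correct. (You could also dispose of even $n$ immediately by Synge's theorem, so the real content is the case of odd $n$; your argument does not exploit this.)

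The proof breaks, however, at the assertion that $A = f^{-1}(\max f)$ is a closed totally geodesic submanifold and that $\widetilde{M}$ is a double disk bundle over $\widebar{N}$ and $A$. None of the tools you invoke deliver this. Grove--Shiohama critical point theory controls the distance function on the annular region, not the geometry of the far set; Perelman's soul arguments live in nonnegative curvature and produce a soul on the near side, not a smooth totally geodesic dual submanifold; and the Connectedness and Periodicity Lemmas give cohomological periodicity of $\widetilde{M}$, not a geometric decomposition. Without the claim that $A$ is closed and totally geodesic, Frankel's theorem cannot be applied to the pair $(\widebar{N}, A)$, so the bound $\dim A \le 1$ is unsupported and everything downstream collapses. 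You concede the point by deferring to "the detailed work of Frank--Rong--Wang," but that deferred step is the entire lemma, not a loose end. Note also that even within your sketch the $\dim A = 0$ sub-case needs an argument: the deduction $|A|=1$ uses connectedness of the boundary circle bundle of the tube around $\widebar{N}$, which again rests on the unestablished decomposition. Since the paper cites this lemma directly from \cite{FRW} without reproducing a proof, there is no in-paper argument to compare against; but as far as I am aware the published proof does not produce a smooth totally geodesic dual submanifold, and instead works at a single point of maximal distance using Berger-type angle estimates for the minimizing geodesics to the codimension-two $\widebar{N}$, combined with the periodicity conclusion that $\widetilde{M}$ is an integral cohomology sphere when $n$ is odd.
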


\section{The Higher Codimension Lemmas}\label{3}

 In this section we apply the \hyperref[connlemma]{Connectedness Lemma} and \hyperref[periodicity]{Periodicity Lemma} to obtain results for maximally connected submanifolds of codimensions greater than $4$. We first prove
 a lemma for odd codimensions that generalizes Part 1 of Proposition \ref{codimsmall} to all odd codimensions. We then prove a lemma for codimensions greater than or equal to $6$ and congruent to $2\pmod 4$, where instead of being able to classify the homotopy type of the maximally connected submanifold, we classify its $\Z_2$-cohomology ring. Lastly, we prove a lemma for all codimensions congruent to $4\pmod 8$ which again  only gives us the $\Z_2$-cohomology ring of the maximally connected submanifold.

 \begin{oddcodimlemma} \label{odd} 
Let $k$ be an odd integer. Suppose $M^{m+k}$ is a positively curved, closed Riemannian manifold with $N^m\subset M^{m+k}$ a totally-geodesic submanifold. Suppose $m\geq k$ and the inclusion $N\hookrightarrow M$ is $m$-connected. Then $N$ is homotopy equivalent to either $S^m$ or $\RP^m$.
\end{oddcodimlemma}

\begin{proof} 
When $k=1$ or $k=3$, the \hyperref[odd]{Odd Codimension Lemma} follows from Proposition \ref{codimsmall}. 
We assume now that $k\geq5$ be odd, $m\geq k$.  
It follows from the $m$-connectivity of $N\hookrightarrow M$ that $\pi_1(N)\cong\pi_1(M)$. Since one of $\dim(N)$ or $\dim(M)$ is even, as $k$ is odd, by Synge's theorem $\pi_1(N)$ is either 0 or $\Z_2$. Our goal is to show that the universal cover of $N$ is a $\Z$-cohomology sphere, and  the result then follows from Theorem \ref{universal}. 
 
Consider the universal cover $p:\widetilde{M}\to M$. By Lemma \ref{connlifting}, $p^{-1}(N)=\widetilde{N}$ is the universal cover of $N$ and $\widetilde{N}\hookrightarrow\widetilde{M}$ is $m$-connected. It follows from the \hyperref[periodicity]{Periodicity Lemma} that $H^*(\widetilde{M};\Z)$ is $k$-periodic. 
 Remark \ref{surj} gives us that $H^{k}(\widetilde{M};\Z)$ is singly-generated. Since $m\geq k$ and $\codim(N\subseteq M)$ is odd, arguing as in the proof of Proposition 7.5 in \cite{Wi}, we see that $H^{k}(\widetilde{M};\Z)$ is either $0$ or $\Z_2$. 

We claim that $H^k(\widetilde{M};\Z)$ is trivial.
 The \hyperref[periodicity]{Periodicity Lemma} implies that $H^*(\widetilde{M};\Z_2)$ is $k$-periodic.
  Let  $x\in H^l(\widetilde{M};\Z_2)$ be the non-trivial element of minimal degree inducing $k$-periodicity. By Theorem \ref{z2}, $l$ is a power of 2 and divides $k$.  Hence $l=1$ and $H^*(\widetilde{M};\Z_2)$ is $1$-periodic. But $\widetilde{M}$ is simply connected and so $H^k(\widetilde{M};\Z_2)$ must also be trivial.
From the Bockstein sequence associated to $\Z\xrightarrow{\times 2}\Z\to\Z_2$ at degree $k$ we obtain
		\[H^k(\widetilde{M};\Z)\xrightarrow{\times 2}H^k(\widetilde{M};\Z)\to H^k(\widetilde{M};\Z_2)\cong 0.\]
\noindent By exactness we conclude that $H^k(\widetilde{M};\Z)\ncong\Z_2$. Hence $H^k(\widetilde{M};\Z)\cong 0$ and the periodicity-inducing element is also trivial.
Thus $\widetilde{M}$ is a $\Z$-cohomology sphere. 
It follows from the $m$-connectivity of $\widetilde{N}\hookrightarrow\widetilde{M}$ that $\widetilde{N}$ is a $\Z$-cohomology sphere, and 
this concludes the proof.
\end{proof}

\begin{codim2mod4}\label{2mod4}
Let $k\equiv 2\pmod{4}$, with $k\geq 6$.  Suppose $M^{m+k}$ is a closed, positively curved Riemannian manifold with $N^m$ a closed, totally geodesic submanifold of $M$. If $m\geq k$ and the inclusion $N^m\hookrightarrow M^{m+k}$ is $m$-connected, then $N$ has the $\Z_2$-cohomology of $S^m$, $\CP^{m/2}$, $\RP^m$, or a lens space $L_4^m$. Moreover, the same conclusions hold for $M$ with the appropriate modifications to the dimensions.
\end{codim2mod4}

\begin{proof} 
Since $N\hookrightarrow M$ is $m$-connected and $N$ and $M$ are both $\Z_2$-orientable, it follows from the \hyperref[periodicity]{Periodicity Lemma} that $H^*(M;\Z_2)$ is $k$-periodic. Since $4\not\divides k$, by Theorem \ref{z2}, $H^*(M;\Z_2)$ is $2$-periodic. Hence by Theorem \ref{z2anypi1}, $M$ has the $\Z_2$-cohomology of $S^m$, $\CP^{m/2}$, $\RP^m$, or $L_4^m$.  Since $N\hookrightarrow M$ is $m$-connected, we can conclude the same for $N$ as well, completing the proof. 
\end{proof}

\begin{mod2codim4lemma}\label{z2codim4}
Let $k\equiv 4\pmod 8$. Suppose $M^{m+k}$ is a closed, positively curved Riemannian manifold with $N^m$ a closed, totally geodesic submanifold of $M$. If $m\geq k$ and the inclusion $N^m\hookrightarrow M^{m+k}$ is $m$-connected,  then $N$ has
the $\Z_2$-cohomology of $S^{m}/\Z_k$ for $k\in\{1,2,4\}$, $\CP^{m/2}$, $\HP^{m/4}$, $S^2\times \HP^{\frac{m-2}4}$, $\RP^2\times \HP^{\frac{m-2}4}$, $S^3/\Z_k\times\HP^{\frac{m-3}4}$ with $k\in\{1,2,4\}$, or $S^{m}/Q_i$ for $i\in\{8,16\}$ and $m\equiv 3\pmod{4}$.
Moreover, the same conclusions hold for $M$ with the appropriate modifications to the dimensions.
\end{mod2codim4lemma}

\begin{proof}
Since the inclusion $N\hookrightarrow M$ is $m$-connected, it follows from Proposition \ref{connlifting} that $\widetilde{N}\hookrightarrow \widetilde{M}$ is also $m$-connected. The \hyperref[periodicity]{Periodicity Lemma} and Lemma \ref{connlifting} gives us that that $H^*(M;\Z_2)$ and $H^*(\widebar{M};\Z_2)$ are $k$-periodic for any cover $\widebar{M}\to M$. Since $8\nmid k$,  by Theorem \ref{z2} we know that $H^*(M;\Z_2)$ and $H^*(\widebar{M};\Z_2)$ are both  $4$-periodic. The result now follows by Theorem \ref{z2anypi1}.
\end{proof}

\section{Restrictions on Chains of Fixed-Point Set Components}\label{4}

Our goal in this section is to establish the tools we need to prove Theorems \hyperref[TA]{A} and \hyperref[TB]{B}. 
We first recall the following observation mentioned in the Introduction that appears in Su and Wang \cite{SW}.

\begin{proposition}\cite{SW}\label{chain} 
 Let $M$ be a closed, positively curved manifold with an isometric, effective $\Z_2^r$-action with a fixed point. Then there is an ascending chain of fixed-point set components $F_j\subseteq F_{j+1}$ so that 
the effective kernel of the induced $\Z_2^r$-action on $F_j$ is $\Z_2^{r-j}$ and  
for each $j$,  $k_j\leq k_{j+1}$. 
\end{proposition}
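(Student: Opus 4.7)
The plan is to construct the chain greedily and verify $k_j\le k_{j+1}$ by a careful analysis of how the isotypic decomposition of the normal space transforms under restriction to a corank-one subgroup. Set $F_0$ to be the connected component of $\Fix(\Z_2^r,M)$ containing the given fixed point $p$, and put $H_0=\Z_2^r$. Suppose inductively that $F_j$ has been constructed as a connected component of $\Fix(H_j,M)$ for some $H_j\le\Z_2^r$ with $H_j\cong\Z_2^{r-j}$ the full pointwise stabilizer of $F_j$ in $\Z_2^r$. The normal space $\nu_j$ at $p$ to $F_j$ is a representation of $H_j$; since $H_j$ is abelian of exponent two, this representation splits over $\R$ into isotypic components $\nu_j=\bigoplus_\alpha V_\alpha$ indexed by the distinct non-trivial characters $\chi_\alpha\colon H_j\to\{\pm1\}$ that appear.

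By the \hyperref[borel]{Borel Formula} applied to $H_j$, each $K_\alpha:=\ker\chi_\alpha$ is a corank-one subgroup of $H_j$ whose fixed-set component $F_j^\alpha$ containing $F_j$ satisfies $\codim(F_j\subseteq F_j^\alpha)=\dim V_\alpha$. I would then choose $\alpha_0$ minimizing $\dim V_{\alpha_0}$ and declare $H_{j+1}:=K_{\alpha_0}$ and $F_{j+1}:=F_j^{\alpha_0}$. This makes $k_j=\dim V_{\alpha_0}$ the minimum dimension occurring among the isotypic summands of $\nu_j$. To see that $H_{j+1}$ is the full pointwise stabilizer of $F_{j+1}$ in $\Z_2^r$, observe that any such stabilizer sits inside $H_j$ (since $F_j\subseteq F_{j+1}$) and must act trivially on $T_pF_{j+1}=T_pF_j\oplus V_{\alpha_0}$, forcing it into $\ker\chi_{\alpha_0}=H_{j+1}$; this verifies the effective-kernel claim in the statement.

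The crucial step is the inequality $k_j\le k_{j+1}$. The normal space $\nu_{j+1}$ at $p$ to $F_{j+1}$ equals $\bigoplus_{\alpha\ne\alpha_0}V_\alpha$ as a vector space, but viewed as an $H_{j+1}$-representation the summands may amalgamate: two of them $V_\alpha$ and $V_\beta$ lie in the same $H_{j+1}$-isotypic class precisely when $\chi_\alpha|_{H_{j+1}}=\chi_\beta|_{H_{j+1}}$, equivalently when $\chi_\alpha\chi_\beta^{-1}\in\{1,\chi_{\alpha_0}\}$. Hence each $H_{j+1}$-isotypic component of $\nu_{j+1}$ is either a single $V_\alpha$ (when $\chi_\alpha\chi_{\alpha_0}$ does not appear among the characters in $\nu_j$) or a pair $V_\alpha\oplus V_{\alpha\chi_{\alpha_0}}$. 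In either case its dimension is at least $\dim V_\alpha$ for some $\alpha\ne\alpha_0$, and by the minimality of $\dim V_{\alpha_0}$ this is $\ge k_j$. A second application of the Borel formula to $H_{j+1}$ at $F_{j+1}$ now identifies $k_{j+1}$ with the minimum such dimension, yielding $k_{j+1}\ge k_j$, and iterating the construction produces the full chain.

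The principal technical obstacle is the amalgamation analysis in the previous paragraph: one must check that restriction to a corank-one subgroup pairs characters of $H_j$ at most two-to-one, namely $\chi$ with $\chi\chi_{\alpha_0}$, so the $H_{j+1}$-isotypic pieces can only be unions of at most two of the original $V_\alpha$'s. Once this is in hand, the greedy choice of $\alpha_0$ at each stage automatically forces $k_j\le k_{j+1}$. Positive curvature enters only implicitly, to ensure that $M$ is a Poincar\'e duality space (so the Borel formula applies) and that the $F_j$ are closed totally geodesic submanifolds.
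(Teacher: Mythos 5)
Your greedy construction is correct, and the amalgamation analysis---that restriction from $H_j$ to $H_{j+1}=\ker\chi_{\alpha_0}$ merges the $H_j$-isotypic pieces of the normal space at most two-to-one, pairing $\chi$ with $\chi\chi_{\alpha_0}$---is precisely what makes the monotonicity $k_j\le k_{j+1}$ go through, since each $H_{j+1}$-isotypic block then contains some $V_\alpha$ with $\alpha\ne\alpha_0$ and hence has dimension at least $\dim V_{\alpha_0}=k_j$. The paper itself defers this proposition to Su and Wang \cite{SW} without reproducing a proof, so there is no argument in the text to compare against, but your proof is complete and is essentially the standard one. One small remark: positive curvature is not in fact used anywhere in your argument, and your hedge that it enters ``implicitly'' is unnecessary---every closed manifold is a $\Z_2$-Poincar\'e duality space, and fixed-point sets of isometries are always closed totally geodesic submanifolds---so the curvature hypothesis appears in the statement only because the proposition sits inside a positive-curvature paper, not because the chain construction requires it.
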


The following corollary combines the fact that for each $j$,  $k_j\leq k_{j+1}$, with a condition on the maximal connectedness of the inclusions $F_j \hookrightarrow F_{j+1} $, see 
Observation \ref{connrep}. Recall that $\dim(F_j)=m_j$.

\begin{corollary}\label{chain!}
Let $M$ be a positively curved closed manifold with an isometric, effective $\Z_2^r$-action with a fixed point. If $k_{j+1}< 2k_{j}$ in the ascending chain of fixed-point set components 
 $F_j\subseteq F_{j+1}$, then the inclusion $F_j\hookrightarrow F_{j+1}$ is $m_j$-connected.
\end{corollary}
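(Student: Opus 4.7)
The plan is to combine the Borel formula with Observation \ref{connrep}, applied not to the full $\Z_2^{r-j}$-action on $M$ but to the effective $\Z_2^2$-action induced on the next level of the chain, $F_{j+2}$. Since $\Z_2^{r-j-2}$ fixes $F_{j+2}$ pointwise, the $\Z_2^{r-j}$-action restricted to the totally geodesic (hence positively curved) submanifold $F_{j+2}$ factors through an effective $\Z_2^2 = \Z_2^{r-j}/\Z_2^{r-j-2}$-action, with fixed-point component $F_j$ and $\codim(F_j \subseteq F_{j+2}) = k_j + k_{j+1}$.

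Next I would analyze the isotropy representation of this effective $\Z_2^2$-action at a normal space to $F_j$ in $F_{j+2}$. The group $\Z_2^2$ has exactly three non-trivial characters; let $a, b, c \geq 0$ denote their eigenspace dimensions, so $a + b + c = k_j + k_{j+1}$, and by the effectivity clause of the Borel formula at least two of $a, b, c$ are positive. The character whose kernel (pulled back to $\Z_2^{r-j}$) is $\Z_2^{r-j-1}$ has eigenspace $T_p F_{j+1}/T_p F_j$, so after relabeling $a = k_j$ and $b + c = k_{j+1}$. By the minimality built into the Su-Wang chain construction of Proposition \ref{chain} — namely, $F_{j+1}$ is chosen so that its codimension over $F_j$ is minimal among fixed-point components of corank-$1$ subgroups of $\Z_2^{r-j}$ that contain $F_j$ with strictly greater dimension — I would conclude $b \geq k_j$ and $c \geq k_j$ whenever each is positive.

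If all of $a, b, c$ were positive, then $k_{j+1} = b + c \geq 2k_j$, contradicting the hypothesis $k_{j+1} < 2k_j$. Hence exactly one of $b, c$ vanishes, so the $\Z_2^2$-isotropy representation on the normal space of $F_j$ in $F_{j+2}$ has exactly $2 = \rk(\Z_2^2)$ inequivalent irreducible subrepresentations, and among the corank-$1$ fixed-point components the minimum codimension $k_j$ is realized by $F_{j+1}$. Applying Observation \ref{connrep} to the effective $\Z_2^2$-action on the positively curved $F_{j+2}$ then yields that $F_j \hookrightarrow F_{j+1}$ is $m_j$-connected. The main subtlety will be pinning down the minimality of $k_j$ implicit in the chain from Proposition \ref{chain} and verifying that Observation \ref{connrep} may be invoked with $(M, \Z_2^{r-j})$ replaced by $(F_{j+2}, \Z_2^2)$; once these are in place, the eigenspace-counting argument above closes the proof.
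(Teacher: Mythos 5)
Your proof matches the paper's argument step for step: reduce to the effective $\Z_2^2$-action on $F_{j+2}$, apply the Borel formula to count irreducible subrepresentations at the normal space of $F_j$, rule out the case of three such subrepresentations using $k_{j+1} < 2k_j$, and then invoke Observation \ref{connrep} to the $\Z_2^2$-action on the positively curved $F_{j+2}$. Your explicit appeal to the minimality of $k_j$ built into the Su--Wang chain (rather than only the monotonicity $k_j \le k_{j+1}$ that the paper cites) is the right reading of what is actually being used, since monotonicity alone does not force both of the other two eigenspaces to have dimension at least $k_j$.
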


\begin{proof}
   By Proposition \ref{chain}, we have a chain of fixed-point set components.  Consider the submanifolds $F_{j}\hookrightarrow F_{j+1}\hookrightarrow F_{j+2}$. Since the effective kernel of the induced $\Z_2^r$-action on $F_i$ is $\Z_2^{r-i}$ for all $i$, we can find a $\Z_2^2\subseteq\Z_2^r$ so that $\Z_2^2$ acts effectively and isometrically on  $F_{j+2}$ with fixed-point set $F_{j}$ and an involution fixing  $F_{j+1}$. Applying Theorem \ref{borel}, and the fact that $k_j\leq k_{j+1}$, we see that if $k_{j+1}< 2k_{j}$, then there are exactly two pairwise inequivalent irreducible subrepresentations of $\Z_2^2$ at the normal space to $F_{j}$ in $F_{j+2}$. It follows from Observation \ref{connrep} that the inclusion $F_j\hookrightarrow F_{j+1}$ is $m_j$-connected.
\end{proof}

 In the following lemma, we show that if $r$ is bounded below by approximately $n/b$, $b\in \N$, and both $k_0$ and $m_0$ are greater than or equal to $2$, then the codimensions of the inclusions, $F_i\subset F_{i+1}$, are at most $b-1$ for the first $\lfloor b/2 \rfloor -1$ values of $i$. 

\begin{lemma}\label{bounds}
    Let $M^n$ be an $n$-manifold admitting an isometric, effective $\Z_2^r$-action. Suppose there exists a chain of fixed-point sets as in Proposition \ref{chain} with $k_0\geq 2$, $m_0\geq 3$, 
    and $r\geq \frac{n}b+\frac{b}2-2.$ Then $2\leq k_{j}\leq b-1$ for $0\leq j\leq \lfloor \frac{b}{2}\rfloor-1$. 
    \end{lemma}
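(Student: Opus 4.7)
The plan is to establish the lower bound $k_j \geq 2$ as an immediate consequence of the hypothesis $k_0 \geq 2$ together with the monotonicity of the codimensions $k_j$ guaranteed by Proposition \ref{chain}, and then attack the upper bound $k_j \leq b-1$ by a direct dimension-counting argument by contradiction.

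For the upper bound, suppose toward a contradiction that some $j_0$ with $0 \leq j_0 \leq \lfloor b/2 \rfloor - 1$ satisfies $k_{j_0} \geq b$. Monotonicity then forces $k_i \geq b$ for every $i \geq j_0$, while $k_i \geq k_0 \geq 2$ for every $i < j_0$. Extending the chain from Proposition \ref{chain} to a terminal $F_r$ (a component of $M$) and using
\[
n \;\geq\; m_r \;=\; m_0 + \sum_{i=0}^{r-1} k_i \;\geq\; m_0 + 2 j_0 + b\,(r - j_0),
\]
together with $m_0 \geq 3$, I would rearrange to get
\[
r \;\leq\; \frac{n}{b} + \frac{j_0(b-2) - 3}{b}.
\]

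To contradict the hypothesis $r \geq \tfrac{n}{b} + \tfrac{b}{2} - 2$, it suffices to show $j_0(b-2) - 3 < \tfrac{b(b-4)}{2}$, i.e.\ $j_0(b-2) < \tfrac{b^2 - 4b + 6}{2}$. I plan to check this in the two cases $b$ even and $b$ odd, taking $j_0$ to be its maximal allowed value. When $b$ is even, $j_0 = b/2 - 1$ gives $j_0(b-2) = (b-2)^2/2 = (b^2 - 4b + 4)/2$, which is strictly less than $(b^2 - 4b + 6)/2$. When $b$ is odd, $j_0 = (b-3)/2$ gives $j_0(b-2) = (b^2 - 5b + 6)/2 < (b^2 - 4b + 6)/2$ since $b > 0$. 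Both cases yield the desired strict inequality, producing the contradiction and completing the argument.

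The main obstacle here is really just the bookkeeping: getting the correct terminal value of the chain (so that $m_r \leq n$ is legitimate), and carefully identifying the worst case for $j_0$ so that the inequality is tight enough to contradict the exact rank bound $r \geq n/b + b/2 - 2$ in both parities of $b$. Once the correct worst-case choice is made, the algebra is elementary.
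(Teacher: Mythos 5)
Your proof is correct and takes essentially the same approach as the paper's: both bound $n = m_0 + \sum_{i=0}^{r-1} k_i$ from below using $m_0 \geq 3$, $k_i \geq 2$ for $i < j_0$, and $k_i \geq b$ for $i \geq j_0$, then plug in the rank hypothesis to force $j_0 \geq \lfloor b/2 \rfloor$. The only cosmetic difference is that the paper handles the floor function with the single inequality $b/2 \geq \lfloor b/2 \rfloor$ rather than splitting on the parity of $b$ as you do.
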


\begin{proof}
   To prove the lemma, 
   we show that if for some $l$ we have $k_l\geq b$, then $l\geq \lfloor\frac{b}{2}\rfloor$. In particular, this implies that $k_{l-1}$, and hence $k_j$ for $0\leq j\leq l-1$ must be bounded between $2$ and $b-1$.

Suppose that for some $l$ we have $k_l\geq b$. Since $r\geq\frac{n}{b}+\frac{b}2-2$, $k_i\geq 2$ for all $i$, and $m_0\geq 3$, it follows that
$$\label{Inequality3}
n 
=m_{0}+\sum_{j=0}^{r-1}k_j\geq m_0+2l+ \sum_{j=l}^{r-1}k_j\geq 3+2l+ b(r-l) \ge
	 \(\frac{b^2-4b+4}{2}\)-(b-2)l+1 +n.$$
So we have $$l\geq \frac{b}{2} + \frac{1}{b-2}-1 \ge \left\lfloor\frac{b}{2}\right\rfloor + \frac{1}{b-2}-1 ,$$
and the result follows.
\end{proof}

We note that in the situation of Corollary \ref{chain!}, where the inclusion $F_j\hookrightarrow F_{j+1}$ is maximally connected, in the case that $1\leq k_j\leq 5$ and $m_j\geq k_j$ for $k_j\geq 3$, we may apply either Proposition \ref{codimsmall} or the \hyperref[odd]{Odd Codimension Lemma}, to identify  the integral cohomology ring of  $F_j$ or classify $F_j$ up to homotopy equivalence. We summarize this in the following lemma, which is used in the proof of Theorem \hyperref[TA]{A}. 

\begin{lemma}\label{chain2} Let $F_j\hookrightarrow F_{j+1}$ be part of a chain of closed, totally geodesic submanifolds in a closed, positively curved manifold, $M$, as defined in Proposition \ref{chain}. 
Suppose for $k_j\geq 2$ that $k_{j+1}\leq  2k_j-1$ except when $k_j=2$ and $m_j$ is odd,
and  when $k_j\geq 3$, assume additionally that $m_j\geq k_j$, 
then the following hold:
\begin{enumerate}
    \item If $k_j$ is odd,
$F_j$ is 
homotopy equivalent to either $S^{m_j}$ or $\RP^{m_j}$. 

\item If $k_j=2$, and  $F_j$ is 
homotopy equivalent  to one of $S^{m_j}$, $\RP^{m_j}$, $\CP^{m_j/2}$, or a lens space. 
\item If $k_j=4$, then

\begin{enumerate}
\item 
  $F_j$ is homotopy equivalent to one of $S^{m_j}/\Z_k$, $k\geq 1$, $\CP^{m_j/2}$, or is an integral cohomology  $\HP^{m_j/4}$; 
 
\item  $\widetilde{F}_j$  has  the integral cohomology of $E^{m_j}_{\ell}$, $m\equiv 2\mod 4$;
\item $\widetilde{F}_j$  has  the integral cohomology of  $S^3\times \HP^{(m_j-3)/4}$ or $N^{m}_j$, $m\equiv 3\mod 4$.
\end{enumerate}
\end{enumerate}
\end{lemma}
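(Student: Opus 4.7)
The plan is to reduce the lemma to the cases already handled by Proposition \ref{codimsmall} and the \hyperref[odd]{Odd Codimension Lemma}, with the chain hypothesis supplying the maximal-connectedness assumption those results require.

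First I would set up the ambient geometry. Since $F_{j+1}$ is a closed, totally geodesic submanifold of a closed positively curved manifold $M$, it is itself closed and positively curved. Moreover, $F_j$ is a component of the fixed-point set of a subgroup of $\Z_2^r$ acting by isometries on $F_{j+1}$, hence $F_j$ is a closed, totally geodesic submanifold of $F_{j+1}$ of codimension $k_j$. Thus the pair $(F_{j+1},F_j)$ is in the precise setting required by Proposition \ref{codimsmall} and the \hyperref[odd]{Odd Codimension Lemma}.

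Next I would invoke the connectedness step. Outside of the exceptional case $k_j=2$ with $m_j$ odd, the hypothesis $k_{j+1}\leq 2k_j-1<2k_j$ lets me apply Corollary \ref{chain!} to conclude that the inclusion $F_j\hookrightarrow F_{j+1}$ is $m_j$-connected. In the exceptional case, I would not need maximal connectedness at all, because the final sentence of Proposition \ref{codimsmall} handles $k=2$ with $m$ odd without that assumption.

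Finally I would match cases to the existing classification results. For Part (1), with $k_j$ odd and necessarily $k_j\geq 3$ by the standing assumption $k_j\geq 2$: if $k_j=3$, apply Part (1) of Proposition \ref{codimsmall}; if $k_j\geq 5$, apply the \hyperref[odd]{Odd Codimension Lemma}, whose hypothesis $m_j\geq k_j$ is exactly what we assumed. For Part (2), with $k_j=2$, apply Part (2) of Proposition \ref{codimsmall}, using maximal connectedness when $m_j$ is even and the last sentence of that proposition when $m_j$ is odd. For Part (3), with $k_j=4$, apply Part (3) of Proposition \ref{codimsmall}, whose hypotheses $m\geq k$ and $m$-connectedness are both in hand.

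There is no real obstacle; the lemma is a bookkeeping consequence of earlier results, and the only subtlety is verifying that the exceptional case in the hypothesis aligns precisely with the exceptional case where Proposition \ref{codimsmall} does not demand maximal connectedness.
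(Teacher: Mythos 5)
Your proposal is correct and takes the same route the paper does: the lemma is explicitly presented as a summary consequence of Corollary \ref{chain!} (supplying $m_j$-connectedness of $F_j\hookrightarrow F_{j+1}$ from $k_{j+1}<2k_j$), Proposition \ref{codimsmall} (for $k_j\leq 4$, with its final sentence handling the $k_j=2$, $m_j$ odd case without connectedness), and the \hyperref[odd]{Odd Codimension Lemma} (for odd $k_j\geq 5$). The only small remark is that, as the lemma is actually invoked in the proof of Theorem \hyperref[TA]{A}, it is also applied with $k_j=1$; that case needs no connectedness hypothesis at all by the last sentence of Proposition \ref{codimsmall}, so your argument extends trivially.
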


Combining Corollary \ref{chain!} with the \hyperref[z2codim4]{$4\pmod 8$ Codimension Lemma} and the \hyperref[2mod4]{$2 \pmod 4$ Codimension Lemma}, we obtain the next corollary, which we need for the proof of Theorem \hyperref[TB]{B}.
\begin{lemma}\label{chain3} Let $F_j\hookrightarrow F_{j+1}$ be part of a chain of closed, totally geodesic submanifolds in a closed, positively curved manifold, $M$, as defined in Proposition \ref{chain}. 
Suppose $k_{j+1}\leq  2k_j-1$
and  $m_j\geq k_j$, then the following hold:
\begin{enumerate}
 
\item If $k_j=4$, then $F_j$ has the $\Z_2$-cohomology of $S^{m_j}/\Z_k$ for $k\in\{1,2,4\}$, $\CP^{m_j/2}$, $\HP^{m_j/4}$, $S^2\times \HP^{(m_j-2)/4}$, $\RP^2\times \HP^{(m_j-2)/4}$, $S^3/\Z_k\times\HP^{(m_j-3)/4}$ with $k\in\{1,2,4\}$, or $S^{m_j}/Q_i$ for $i\in\{8,16\}$ and $m_j\equiv 3\pmod{4}$.

\item If $k_j=6$, then $F_j$ has the $\Z_2$-cohomology of $S^{m_j}$, $\RP^{m_j}$, $\CP^{m_j}$, or a lens space.
\end{enumerate}
\end{lemma}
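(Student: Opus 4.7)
The plan is to observe that this lemma is essentially an immediate combination of Corollary \ref{chain!} (to obtain the requisite connectivity of the inclusion $F_j \hookrightarrow F_{j+1}$) with the higher codimension lemmas from Section \ref{3}, matched up by the residue class of $k_j$. No new ideas are needed beyond bookkeeping.

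First I would note that the hypothesis $k_{j+1} \leq 2k_j - 1$ implies in particular $k_{j+1} < 2k_j$, so Corollary \ref{chain!} gives that $F_j \hookrightarrow F_{j+1}$ is $m_j$-connected. Since $F_j$ and $F_{j+1}$ are fixed-point set components of isometric actions on the closed, positively curved manifold $M$, the submanifold $F_j$ is closed and totally geodesic in the closed, positively curved manifold $F_{j+1}$, of codimension $k_j$. Thus the ambient setup of the higher codimension lemmas is met, and $m_j \geq k_j$ is assumed.

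For Part (1), where $k_j = 4$, I have $k_j \equiv 4 \pmod 8$, so I would apply the \hyperref[z2codim4]{$4\,(\textrm{mod}\,8)$ Codimension Lemma} to the inclusion $F_j \hookrightarrow F_{j+1}$. The list of $\Z_2$-cohomology rings produced by that lemma is exactly the list appearing in the statement of Part (1), so the conclusion follows immediately.

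For Part (2), where $k_j = 6$, I have $k_j \equiv 2 \pmod 4$ with $k_j \geq 6$, so I would instead apply the \hyperref[2mod4]{$2\,(\textrm{mod}\,4)$ Codimension Lemma}. That lemma yields that $F_j$ has the $\Z_2$-cohomology of $S^{m_j}$, $\CP^{m_j/2}$, $\RP^{m_j}$, or the lens space $L_4^{m_j}$, which matches Part (2). Since each case is a direct invocation of a lemma already established, there is no real obstacle here; the only subtlety is checking that the hypotheses of the appropriate codimension lemma, in particular the $m_j$-connectivity of the inclusion and $m_j \geq k_j$, are satisfied, both of which are built into the hypotheses of the present lemma.
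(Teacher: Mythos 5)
Your proposal is correct and matches the paper's approach exactly: the paper states this lemma precisely as a consequence of combining Corollary \ref{chain!} (to get $m_j$-connectivity from $k_{j+1} \leq 2k_j - 1$) with the $4\,(\textrm{mod}\,8)$ and $2\,(\textrm{mod}\,4)$ Codimension Lemmas, applied to $k_j = 4$ and $k_j = 6$ respectively, and offers no further argument. Your bookkeeping of the hypotheses ($m_j \geq k_j$ and the required connectivity) is what the paper implicitly relies on as well.
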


We note that in the proofs of Theorems \hyperref[TA]{A} and \hyperref[TB]{B}, our goal is to apply Lemma \ref{chain2} or Lemmas \ref{chain2} and  \ref{chain3}, respectively. To apply either one to $F_0$, we need $m_0\geq k_0$ and the inclusion of $F_0$ in $F_1$ to be maximally connected. 
In the following lemma, we consider the case where   $m_0< k_0$ is a possibility. 

\begin{proposition}\label{l1}

Let $M$ be a closed, positively curved manifold, and $F_i$ be closed, totally geodesic submanifolds of $M$ of dimensions $m_i$ satisfying
        \[F^{}_0\subseteq F^{}_1\subseteq F^{}_2 \subseteq F^{}_3,\]
with $m_1\geq k_1$, $k_{i+1}\leq 2k_{i}-1$ for $i\in\{1, 2\}$, with at least one of the $k_i$, $i\in\{1, 2\}$, is odd. Suppose further that $k_i\leq 5$, $0\leq i\leq 2$.  If either
\begin{enumerate}
\item $k_1\leq 2k_0-1$; or
\item $k_1\geq 2k_0$ and $F_0 \hookrightarrow F_1$ is at least $(\lfloor \frac{m_0}{2}\rfloor +1)$-connected.
\end{enumerate}
Then $F_0$ is homotopy equivalent to a sphere or a real projective space.
\end{proposition}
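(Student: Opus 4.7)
The plan is to first classify $F_1$ up to homotopy as $S^{m_1}$ or $\RP^{m_1}$ by working upwards in the chain, and then propagate this classification down to $F_0$ via the two different connectivity hypotheses of cases (1) and (2). The key inputs are the \hyperref[odd]{Odd Codimension Lemma}, Corollary \ref{chain!}, Lemma \ref{connlifting}, and Theorem \ref{universal}.

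To classify $F_1$, I would split on which of $k_1, k_2$ is odd. If $k_1\in\{3,5\}$, then since $k_2\leq 2k_1-1<2k_1$, Corollary \ref{chain!} gives that $F_1\hookrightarrow F_2$ is $m_1$-connected, and the \hyperref[odd]{Odd Codimension Lemma} (applicable since $m_1\geq k_1$) yields $F_1\simeq S^{m_1}$ or $\RP^{m_1}$. If $k_1$ is even but $k_2\in\{3,5\}$, I would first apply the Odd Codimension Lemma to $F_2\hookrightarrow F_3$: by hypothesis $k_3\leq 2k_2-1$, so Corollary \ref{chain!} gives $m_2$-connectedness, and $m_2=m_1+k_1\geq 2k_1>k_2$ ensures $m_2\geq k_2$. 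This shows $F_2\simeq S^{m_2}$ or $\RP^{m_2}$. Next, since $k_2<2k_1$, the inclusion $F_1\hookrightarrow F_2$ is $m_1$-connected by Corollary \ref{chain!}, and Lemma \ref{connlifting} lifts this to an $m_1$-connected inclusion $\widetilde{F}_1\hookrightarrow\widetilde{F}_2$. Because $\widetilde{F}_2$ is a homotopy sphere, $\widetilde{F}_1$ is $(m_1-1)$-connected and hence itself a homotopy $S^{m_1}$. Since $\pi_1(F_1)$ injects into $\pi_1(F_2)\leq\Z_2$, Theorem \ref{universal} forces $F_1\simeq S^{m_1}$ or $\RP^{m_1}$.

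Having established the classification of $F_1$, I descend to $F_0$ according to the case. In case (1), the assumption $k_1\leq 2k_0-1$ and Corollary \ref{chain!} make $F_0\hookrightarrow F_1$ an $m_0$-connected embedding. Lemma \ref{connlifting} then produces an $m_0$-connected embedding $\widetilde{F}_0\hookrightarrow\widetilde{F}_1\simeq S^{m_1}$, so $\widetilde{F}_0$ is $(m_0-1)$-connected and therefore a homotopy sphere. Since $\pi_1(F_0)\leq\pi_1(F_1)\leq\Z_2$, Theorem \ref{universal} yields $F_0\simeq S^{m_0}$ or $\RP^{m_0}$. In case (2), Lemma \ref{connlifting} again lifts the given $(\lfloor m_0/2\rfloor+1)$-connectedness to the universal covers, yielding that $\widetilde{F}_0$ is $\lfloor m_0/2\rfloor$-connected as a submanifold of $\widetilde{F}_1\simeq S^{m_1}$. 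Hurewicz and Poincaré duality for the closed orientable $m_0$-manifold $\widetilde{F}_0$ then kill all intermediate (co)homology, making $\widetilde{F}_0$ a homotopy $S^{m_0}$. A final application of Theorem \ref{universal} gives $F_0\simeq S^{m_0}$ or $\RP^{m_0}$.

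The main obstacle is the two-stage Step 1 when $k_1$ is even and only $k_2$ is odd: one must first classify $F_2$ by invoking the chain one step further, then carefully transfer simple-connectedness of the universal covers along with the connectivity of the inclusion to deduce the same classification for $F_1$. The remaining book-keeping -- verifying $m_j\geq k_j$ where required, tracking the $\pi_1$ injections along the chain, and running the Poincaré duality argument on $\widetilde{F}_0$ for both parities of $m_0$ -- is routine but needs care.
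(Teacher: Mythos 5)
Your proof is correct and follows essentially the same route as the paper's: classify $F_1$ as a sphere or $\RP^{m_1}$ by applying the Odd Codimension Lemma at whichever of the two inclusions $F_1\hookrightarrow F_2$, $F_2\hookrightarrow F_3$ has odd codimension (using Corollary \ref{chain!} to secure maximal connectivity there), then descend to $F_0$ through the $m_0$-connected (case 1) or $(\lfloor m_0/2\rfloor+1)$-connected (case 2) inclusion via Lemma \ref{connlifting} and Theorem \ref{universal}. Where the paper's argument is terse --- it merely asserts that "the maximal connectivity of $F_1$ in $F_2$ then allows us to see $F_1$ is homotopy equivalent to a sphere or a real projective space" --- you have filled in the actual mechanism: lift the $m_1$-connected inclusion to the universal covers, note $\widetilde F_2$ is a homotopy sphere, conclude $\widetilde F_1$ is $(m_1-1)$-connected and hence a homotopy sphere by Hurewicz and Poincaré duality, and then use the $\pi_1$-isomorphism and Theorem \ref{universal} to land on $S^{m_1}$ or $\RP^{m_1}$. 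The same pattern correctly handles the descent to $F_0$, including the Poincaré-duality step in case (2) where only $\lfloor m_0/2\rfloor$-connectivity of $\widetilde F_0$ is available. The book-keeping ($m_2\ge k_2$ from $m_1\ge k_1$ and $k_2\le 2k_1-1$; $\pi_1$ passing to a subgroup of $\Z_2$) is handled correctly. This is the paper's argument with the missing details supplied.
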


\begin{proof}

Using Corollary \ref{chain!}, it follows that 
$F_i$ is maximally connected in $F_{i+1}$ for $i\in\{1, 2\}$. By hypothesis, $m_1\geq k_1$, and since $k_2\leq 2k_1-1$, we have  that $m_2\geq k_2$. Combining the hypothesis that   $k_i\leq 5$ with the maximal connectivity of the inclusions, we may then apply either Lemma \ref{codimsmall} if $k_i\leq 4$ for  $i\in\{1, 2\}$ or the \hyperref[odd]{Odd Codimension Lemma} if $k_i=5$ for some $i\in\{1, 2\}$ to  the inclusions of $F_i$ in $F_{i+1}$ for $i\in\{1, 2\}$ to obtain integral cohomology ring  or homotopy equivalence  information about $F_1$ and $F_2$. 

Our goal is to show that $F_1$ is homotopy equivalent to a sphere or a real projective space. Since at least one of the $k_i$, $i\in\{1, 2\}$, is odd, we then see that the corresponding $F_i$ is homotopy equivalent to a sphere or a real projective space. If this is true for $F_1$, we are done. If not, the maximal connectivity of $F_1$ in $F_2$ then allows us to see  $F_1$ is a homotopy equivalent to a sphere or a real projective space, as desired.

 By Corollary \ref{chain!}, in Case 1, $F_0$ is maximally connected. In both Cases 1 and 2, the inclusion $F_0\hookrightarrow F_1$ is sufficiently connected to conclude that $F_0$ is also  homotopy equivalent to a sphere or a real projective space.
\end{proof}

\begin{lemma}\label{l2}
Let $M$ be a closed, positively curved manifold, and $F_i$ be closed, totally geodesic submanifolds of $M$ satisfying
$$F_0\subseteq F_1\subseteq F_2,$$
with $F_1\hookrightarrow F_2$ $m_1$-connected, and $m_0\geq 4$. Suppose   $k_0=2$, and $k_1=4$.
Then $F_0$ is homotopy equivalent to $S^{m_0}$, $\RP^{m_0}$, $\CP^{\frac{m_0}2}$, or a lens space.
\end{lemma}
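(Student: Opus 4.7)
The plan is to pass to universal covers, use the $m_1$-connected codimension-$4$ inclusion to restrict $\widetilde{F}_1$ tightly via Proposition \ref{codimsmall}, and then use codimension-$2$ periodicity to transfer the classification down to $\widetilde{F}_0$. First, by the \hyperref[connlemma]{Connectedness Lemma} the codimension-$2$ inclusion $F_0 \hookrightarrow F_1$ is $(m_0-1)$-connected; since $m_0-1 \geq 3 \geq 2$, Lemma \ref{connlifting} lifts this to a $(m_0-1)$-connected inclusion $\widetilde{F}_0 \hookrightarrow \widetilde{F}_1$ of universal covers. Likewise, the hypothesis that $F_1 \hookrightarrow F_2$ is $m_1$-connected lifts to an $m_1$-connected inclusion $\widetilde{F}_1 \hookrightarrow \widetilde{F}_2$, with $\widetilde{F}_1$ and $\widetilde{F}_2$ simply connected and positively curved.

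Next, apply Proposition \ref{codimsmall} Part $3$ (codimension $4$) to $\widetilde{F}_1 \hookrightarrow \widetilde{F}_2$, valid since $m_1 = m_0+2 \geq 6 \geq 4 = k_1$: because $\widetilde{F}_1$ is simply connected, it is homotopy equivalent to $S^{m_1}$ or $\CP^{m_1/2}$, or else has the integral cohomology of $\HP^{m_1/4}$, $E^{m_1}_\ell$ with $m_1 \equiv 2 \pmod 4$, $S^3 \times \HP^{(m_1-3)/4}$, or $N^{m_1}_j$ with $m_1 \equiv 3 \pmod 4$. Now apply the \hyperref[periodicity]{Periodicity Lemma} to $\widetilde{F}_0 \hookrightarrow \widetilde{F}_1$ with $l=1$ (noting $m_0 - 2l = m_0 - 2 > 0$) to produce $x \in H^2(\widetilde{F}_1;\Z)$ such that $\smile x : H^i(\widetilde{F}_1;\Z) \to H^{i+2}(\widetilde{F}_1;\Z)$ is an isomorphism for $1 < i < m_0-1 = m_1 - 3$. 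Running through the list, the spaces $\HP^{m_1/4}$, $S^3 \times \HP^{(m_1-3)/4}$ and $N^{m_1}_j$ each have $H^2=0$ but nonzero $H^4$, which is incompatible with this isomorphism; for $E^{m_1}_\ell$ with $\ell \geq 2$ the cup product $\smile y : H^2 \to H^4$ has nontrivial cokernel $\Z/\ell$; and $E_0^{m_1}$, which has the integral cohomology of $S^2 \times \HP^{(m_1-2)/4}$, has $y^2 = 0$. Only $\widetilde{F}_1$ homotopy equivalent to $S^{m_1}$ or $\CP^{m_1/2}$ (the case $E_1^{m_1}$) survives.

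Using the $(m_0-1)$-connectedness of $\widetilde{F}_0 \hookrightarrow \widetilde{F}_1$, the cohomology $H^i(\widetilde{F}_0;\Z)$ is determined in degrees $i \leq m_0-2$; combining with Poincar\'e duality on the simply connected, oriented, closed manifold $\widetilde{F}_0$ fills in the remaining top degrees, and in the $\CP^{m_1/2}$ case the multiplicative structure at the top degree is forced by the perfect Poincar\'e pairing. Thus $\widetilde{F}_0$ is either a $\Z$-cohomology sphere or has the integer cohomology of $\CP^{m_0/2}$. Applying Theorem \ref{universal}, together with Lemma \ref{codim2cyclic} to ensure $\pi_1(F_0) \cong \pi_1(F_1)$ is cyclic in the sphere subcase, yields that $F_0$ is homotopy equivalent to $S^{m_0}$, $\RP^{m_0}$, a lens space, or $\CP^{m_0/2}$, as required.

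The main obstacle will be the case-by-case elimination in the second paragraph, where one must verify for each simply connected, $4$-periodic possibility from Proposition \ref{codimsmall} Part $3$ that the additional $2$-periodicity imposed on $H^*(\widetilde{F}_1;\Z)$ by the codimension-$2$ inclusion of $\widetilde{F}_0$ is incompatible with its integer cohomology ring. A secondary subtlety arises in the $\CP^{m_0/2}$ subcase, where the universal-cover computation only yields $\widetilde{F}_0 \simeq \CP^{m_0/2}$; one must separately argue (using Synge's theorem and the known obstruction to free $\Z_2$-actions on $\CP^n$) that $\pi_1(F_0)$ vanishes, in order to conclude that $F_0$ itself, and not merely its universal cover, is homotopy equivalent to $\CP^{m_0/2}$.
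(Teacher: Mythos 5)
Your proof is correct in substance and arrives at the same conclusion, but it takes a slightly different route from the paper's for the key reduction. The paper applies the \hyperref[periodicity]{Periodicity Lemma} to both inclusions, and then, after observing that the codimension-$2$ periodicity forces $\smile\alpha : H^2(\widetilde{F}_1;\Z)\to H^4(\widetilde{F}_1;\Z)$ to be an isomorphism, pushes a nontrivial $4$-periodicity-inducing element of $H^4(\widetilde{F}_2;\Z)$ back to $\widetilde{F}_1$, factors it through $\alpha$, and invokes Lemma~\ref{factor} to conclude that $H^*(\widetilde{F}_2;\Z)$ (hence $H^*(\widetilde{F}_1;\Z)$) is $2$-periodic; the two survivors $S^{m_1}$ and $\CP^{m_1/2}$ then drop out with no case checking. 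You instead first feed $\widetilde{F}_1\hookrightarrow\widetilde{F}_2$ into Proposition~\ref{codimsmall}, Part~3, to get the full codimension-$4$ list, and then use the same degree-$2$ periodicity isomorphism to eliminate $\HP^{m_1/4}$, $S^3\times\HP$, $N^{m_1}_j$ (all have $H^2=0$, $H^4\neq0$), $E^{m_1}_\ell$ with $\ell\geq 2$ (cokernel $\Z/\ell$), and $E_0\simeq S^2\times\HP$ ($y^2=0$). The case enumeration is correct, but it is doing by hand exactly the work that Lemma~\ref{factor} does structurally; the paper's version is both shorter and does not require knowing the classification list in advance. One genuine loose end in your sketch: in the $\CP^{m_0/2}$ subcase you flag that one must show $\pi_1(F_0)=0$ but do not close the argument. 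The missing ingredient is that since $k_0=2$, exactly one of $m_0/2$ and $m_1/2=m_0/2+1$ is even, so exactly one of $\euc(\widetilde{F}_0)$, $\euc(\widetilde{F}_1)$ is odd, forbidding a free $\Z_2$-action on that cover; combined with Synge's theorem (so $\pi_1\in\{0,\Z_2\}$) and $\pi_1(F_0)\cong\pi_1(F_1)$, this forces triviality. With that sentence added, your proof is complete.
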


\begin{proof}
We first claim that $\widetilde{F}_0$ is a $\Z$-cohomology sphere or complex projective space.  By Lemma \ref{connlifting} we can lift the chain of inclusions $F_0\underset{}{\subseteq}F_1 \underset{}{\subseteq}F_{2}$, to the chain of inclusions
		$\widetilde{F}_0\underset{}{\subseteq}\widetilde{F}_1 \underset{}{\subseteq}\widetilde{F}_{2},$
where the connectivity of $F_i\subseteq F_{i+1}$ is equal to the connectivity of $\widetilde{F}_i\subseteq \widetilde{F}_{i+1}$ for $i=0, 1.$
By the \hyperref[connlemma]{Connectedness Lemma}, we have that $\widetilde{F}_0\hookrightarrow \widetilde{F}_1$ is $(m_0-1)$-connected, with $(m_0-1)\geq 3$.   
Since $k_0=2$, applying the \hyperref[periodicity]{Periodicity Lemma} to $\widetilde{F}_0\hookrightarrow \widetilde{F}_1$ implies that there is an element $\alpha\in H^2(\widetilde{F}_1;\Z)$ such that $\smile \alpha:H^2(\widetilde{F}_1;\Z)\to H^4(\widetilde{F}_1;\Z)$ is an isomorphism. 
Since $F_1$ is $m_1$-connected in $F_2$ by hypothesis and $k_1=4$, we may apply the  \hyperref[periodicity]{Periodicity Lemma}
to conclude that $H^*(\widetilde{F}_2;\Z)$ 
is four-periodic. If the periodicity-inducing 
element in $H^*(\widetilde{F}_2;\Z)$ is 
trivial, then $\widetilde{F}_2$ is an 
integral cohomology sphere, and hence so are 
$\widetilde{F}_1$ and $\widetilde{F}_0$, respectively,  by the connectivity of their inclusions and Poincar\'e duality.

Suppose then that $x\in H^*
(\widetilde{F}_2;\Z)$ is non-trivial and 
induces four-periodicity.  
Since the inclusion $\iota:\widetilde{F}_1\hookrightarrow \widetilde{F}_2$ is $m_1$-connected, there is a non-trivial $y\in 
H^4(\widetilde{F}_1;\Z)$ so that 
$\iota^*(y)=x$. 
Since $y$ is non-zero, it follows that $y = \alpha\smile z$ with $z\in H^2(\widetilde{F}_1;\Z)$. 
Thus, $x\in H^4(\widetilde{F}_2;\Z)$ factors as a product of non-trivial elements in $H^2(\widetilde{F}_2;\Z)$.
Lemma \ref{factor} then implies that $H^*(\widetilde{F}_2;\Z)$ is $2$-periodic and thus $H^*(\widetilde{F}_1;\Z)$ is $2$-periodic. Thus, 
$\widetilde{F}_1$ is 
either an integral cohomology $S^{m_1}$ or $\CP^{m_1/2}$. The connectivity of the inclusion $\widetilde{F}_0\hookrightarrow \widetilde{F}_1$ and Poincar\'e duality give us the same result for  $\widetilde{F}_0$, thus proving the claim.

If $\widetilde{F}_0$ is an integral cohomology $S^{m_0}$, the 
result now follows from Lemma 
\ref{codim2cyclic} and Theorem 
\ref{universal}. 
Suppose then that $\widetilde{F}_0$  
has the integral cohomology of 
$\CP^{m_0/2}$. Since $k_0=2$ and  
$\widetilde{F}_1$ is also an integral 
cohomology complex projective space,  
one of $\euc(\widetilde{F}_0)$ or $\euc(\widetilde{F}_1)$ is 
odd. Since $\Z_2$ cannot act freely 
on one of $\widetilde{F}_0$ or 
$\widetilde{F}_1$,  
 both $F_0$ and 
$F_1$ must have
trivial fundamental group. We may then apply Theorem 
\ref{universal} to see that $F_0$ is 
homotopy equivalent to 
$\CP^{m_0/2}$.

\end{proof}

\begin{lemma}\label{l3}
Let $M$ be a closed, positively curved manifold, and $F_i$ be closed, totally geodesics submanifolds of $M$ satisfying
$$F_0\subseteq F_1\subseteq F_2,$$
with $F_1\hookrightarrow F_2$ $m_1$-connected, and $m_0\geq 4$. Suppose  either $k_0=2$ and $4\leq k_1\leq 7$, or  $k_0=3$, and $6\leq k_1\leq 7$.
Then $F_0$ has the $\Z_2$-cohomology of $S^{m_0}$, $\RP^{m_0}$, $\CP^{\frac{m_0}2}$, or a lens space.
\end{lemma}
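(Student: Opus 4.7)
The plan is to apply the \hyperref[periodicity]{Periodicity Lemma} to place a $k_1$-periodic $\Z_2$-cohomology structure on $F_2$, refine this to minimal period $1$ or $2$ using Theorem \ref{z2} (supplemented by the partial $k_0$-periodicity coming from $F_0 \hookrightarrow F_1$ when necessary), and then transfer the structure down to $F_0$, invoking Theorem \ref{z2anypi1}(1) to conclude.

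First, the \hyperref[connlemma]{Connectedness Lemma} gives that $F_0 \hookrightarrow F_1$ is $(m_0 - k_0 + 1)$-connected. Whenever $m_0 \geq 2k_0 - 1$ (automatic for $k_0 = 2$, and requiring $m_0 \geq 5$ when $k_0 = 3$), the \hyperref[periodicity]{Periodicity Lemma} with $\Z_2$-coefficients and $l = k_0 - 1$ yields a class $\alpha \in H^{k_0}(F_1;\Z_2)$ inducing partial $k_0$-periodicity; in particular, when $k_0 = 2$ and $m_0 \geq 4$, one obtains $\smile \alpha: H^2(F_1;\Z_2) \to H^4(F_1;\Z_2)$ an isomorphism. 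Separately, the \hyperref[periodicity]{Periodicity Lemma} applied to the $m_1$-connected inclusion $F_1 \hookrightarrow F_2$ produces $\beta \in H^{k_1}(F_2;\Z_2)$ inducing full $k_1$-periodicity on $F_2$.

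In all subcases except the boundary case $(k_0, k_1, m_0) = (2, 7, 4)$, we have $\dim F_2 = m_1 + k_1 \geq 2 k_1$, so Theorem \ref{z2} ensures that the minimal $\Z_2$-periodicity of $F_2$ has degree a power of $2$ dividing $k_1$. This yields minimal period $1$ for $k_1 \in \{5, 7\}$, minimal period in $\{1, 2\}$ for $k_1 = 6$, and minimal period in $\{1, 2, 4\}$ for $k_1 = 4$. When $k_1 = 4$ (which forces $k_0 = 2$), the case of minimal period $4$ is ruled out by arguing exactly as in the proof of Lemma \ref{l2}: lifting $\alpha$ to $\alpha' \in H^2(F_2;\Z_2)$ via the $m_1$-connectivity, one factors $\beta$ as $\alpha' \smile \gamma'$ in $H^4(F_2;\Z_2)$ using the isomorphism $\smile \alpha$, after which Lemma \ref{factor} forces minimal period at most $2$. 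The boundary subcase $(k_0, k_1, m_0) = (2, 7, 4)$, where $\dim F_2 = 13 < 14 = 2 k_1$, requires a direct analysis: one combines the partial $2$-periodicity from $\alpha$, Synge's theorem constraining $\pi_1(F_0) \in \{0, \Z_2\}$, and the positivity of $\chi(F_0)$ (Chern-Gauss-Bonnet on a closed positively curved $4$-manifold) to verify directly that $F_0$ has the $\Z_2$-cohomology ring of $S^4$, $\RP^4$, or $\CP^2$.

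Once $F_2$ is known to have $1$- or $2$-periodic $\Z_2$-cohomology, Theorem \ref{z2anypi1}(1) identifies its ring structure as that of $S^{\dim F_2}$, $\CP^{\dim F_2/2}$, $\RP^{\dim F_2}$, or $L_4^{\dim F_2}$. The chain of connected inclusions---$F_1 \hookrightarrow F_2$ being $m_1$-connected and $F_0 \hookrightarrow F_1$ being $(m_0 - k_0 + 1)$-connected---combined with $\Z_2$-Poincar\'e duality on $F_0$, then transfers this ring structure to $F_0$, yielding the claimed conclusion. The main obstacle will be the boundary analysis in the subcase where $\dim F_2 < 2 k_1$ prevents the direct application of Theorem \ref{z2}, together with the subcases $(k_0, m_0) = (3, 4)$ where the \hyperref[periodicity]{Periodicity Lemma} cannot be applied to $F_0 \hookrightarrow F_1$ and one must instead lean on the positive-curvature constraints (Synge, Chern--Gauss--Bonnet) together with the cohomological transfer from $F_1$ and $F_2$.
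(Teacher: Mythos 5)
Your approach is genuinely different from the paper's, and the broad outline works on most cases, but it leaves exactly the boundary cases you flag unresolved — and the tool the paper uses to close them is one you never invoke: the $\Z_2$-\emph{action} on $F_1$ whose fixed-point set contains $F_0$.

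The paper does not analyze the periodicity of $F_2$ directly. It applies the Odd Codimension Lemma (for $k_1\in\{5,7\}$) or the $2\pmod 4$ Codimension Lemma (for $k_1=6$), and Lemma~\ref{l2} for $k_1=4$, to the $m_1$-connected inclusion $F_1\hookrightarrow F_2$ to pin down the $\Z_2$-cohomology of $F_1$, and then passes from $F_1$ to $F_0$ via Theorem~\ref{Bredon} (Bredon's result on $\Z_2$-actions on $\Z_2$-cohomology $\FP^n$'s, with Smith theory, Theorem~\ref{smith}, covering the sphere case), eliminating the residual lens-space case separately with Synge's theorem and Proposition~\ref{Su}. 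You instead try to push the ring structure from $F_2$ down to $F_0$ purely by connectivity of the inclusions plus Poincar\'e duality.

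That transfer is exactly where the argument fails. When $k_0=3$ and $m_0=4$ the inclusion $F_0\hookrightarrow F_1$ is only $(m_0-k_0+1)=2$-connected, so $H^i(F_1;\Z_2)\to H^i(F_0;\Z_2)$ is an isomorphism only for $i\le 1$, and Poincar\'e duality on the $4$-manifold $F_0$ then fixes $H^3$ and $H^4$ but leaves $H^2(F_0;\Z_2)$ entirely undetermined. Your proposed fix via Synge and positivity of $\chi(F_0)$ cannot close this: Synge constrains only $\pi_1$, and $\chi(F_0)>0$ is automatic for any simply connected closed $4$-manifold; the bound $b_2(F_0;\Z_2)\le 1$ that you actually need is not a consequence of positive curvature alone (it would, for instance, resolve whether $S^2\times S^2$ carries positive curvature). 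The paper's step through Theorem~\ref{Bredon}/Theorem~\ref{smith}, using that $F_0$ is the fixed-point set of an involution on $F_1$, is precisely the ingredient that supplies the missing degree. The same problem affects your boundary subcase $(k_0,k_1,m_0)=(2,7,4)$, where $\dim F_2=13<14=2k_1$ blocks Theorem~\ref{z2}: you identify the obstruction correctly, but again the sketched remedy leans only on Synge and Euler characteristic, which do not determine the ring. Until you bring the group action into the argument, these cases remain genuine gaps; and the cleanest way to close them is the one the paper takes, which makes the detour through periodicity of $F_2$ unnecessary.
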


\begin{proof}
    We note that Lemma \ref{l2} gives us the desired result when $k_1=4$, so we only need consider the cases where $k_1\in \{5, 6, 7\}$.
Since the inclusion $F_1\hookrightarrow F_2$ is $m_1$-connected and $m_1\geq 8$, we apply Part 1 of Lemma \ref{chain2} or Part 2 of Lemma \ref{chain3} to 
obtain that $F_1$ has the $\Z_2$-cohomology of $S^{m_0}$, $\RP^{m_0}$, $\CP^{\frac{m_0}2}$, or a lens space. For all of these cases but one, namely when $F_1$ has the $\Z_2$-cohomology of a lens space, we apply Theorem \ref{Bredon} to obtain that $F_0$ is as in the conclusion of Lemma \ref{l3}.

In the case where $F_1$ is a $\Z_2$-cohomology lens space,  $m_1$ must be odd. If $k_0=2$, the result follows from Proposition \ref{codimsmall}. It remains to consider the case when $k_0=3$. 
Since $m_0$ must be even,  $\pi_1(F_0)$ is either 0 or $\Z_2$ by Synge's theorem. It follows from the \hyperref[connlemma]{Connectedness Lemma} that $\pi_1(F_1)$ is either $0$ or $\Z_2$, and by Theorem \ref{Su}, $F_1$ is either a $\Z_2$-cohomology sphere or real projective space. The result  follows now from Theorem \ref{Bredon}.
\end{proof}

\section{The Proof of Theorem A}\label{5}
In this section we prove Theorem \hyperref[TA]{A}, which we restate with more detail in Part 2 for the convenience of the reader, see Observation \ref{quotients}.

\begin{theorem}\label{Atech} Let $M^n$ be a closed, positively curved manifold, and assume $\Z_2^r$ acts effectively by isometries on $M$ with non-empty fixed-point set, with $n\geq 7$. Let $F^m$ denote the fixed-point set component of $\Z_2^r$ containing $x$ and suppose that $m\geq 4$. Suppose 
		\[r\geq\frac{n}{6}+1.\]		
\noindent Then one of the following holds:

\begin{enumerate}
\item $F^m$ is homotopy equivalent to one of  $S^m$, $\RP^m$, $\CP^{\frac{m}2}$, or a lens space; or
\item   $F^m$ has the integral cohomology ring of   $S^m/\Gamma$, $(S^3/\Gamma) \times \HP^{\frac{m-3}{4}}$, $N_j^{}/\Gamma$ with $m\equiv  3 \pmod 4$, $\CP^{\frac{m}{2}}/\Delta$, $(S^2/\Delta)\times \HP^{\frac{m-2}{4}}$,  $\HP^{\frac{m}{4}}$, or $N^m_j/\Gamma$, $E^{m}_{\ell}/\Delta$, $\ell\geq 2$, with  $m\equiv 2 \pmod 4$ and $j$ odd, where $\Gamma$ is some finite group acting freely on the corresponding manifold  and $\Delta$ is  a subgroup of $\Z_2$.
\end{enumerate}
\end{theorem}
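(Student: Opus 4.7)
The proof will follow the outline in the Introduction. By Proposition \ref{chain}, there is an ascending chain $F_0 = F^m \subseteq F_1 \subseteq \cdots \subseteq F_{r-1}$ of fixed-point set components with non-decreasing codimensions $k_j$. The classification of closed positively curved manifolds in dimensions at most three, together with the codimension-one case (which follows directly from the \hyperref[connlemma]{Connectedness Lemma}), reduces the problem to the setting $m_0 \geq 4$ and $k_0 \geq 2$. Since $r \geq n/6 + 1$ matches $r \geq n/b + b/2 - 2$ for $b = 6$, Lemma \ref{bounds} yields $2 \leq k_j \leq 5$ for $j \in \{0, 1, 2\}$. A standard argument bounding the remaining $k_j$ shows in particular that $k_3 \leq 9$ whenever needed.

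The core of the proof will be a case analysis on $(k_0, k_1)$. In the main case, when $k_0 \geq 3$ and $m_0 \geq k_0$, the inequality $k_1 \leq 5 \leq 2k_0 - 1$ together with Corollary \ref{chain!} gives that $F_0 \hookrightarrow F_1$ is $m_0$-connected, so Lemma \ref{chain2} applies at $j = 0$. The $\widetilde{F}_0$-conclusions arising in the $k_0 = 4$ subcases of Lemma \ref{chain2} will then be translated into the quotient cohomology rings of $F_0$ listed in conclusion (2) by applying Proposition \ref{quotients}, which is legitimate since $F_0$ inherits positive curvature as a totally geodesic submanifold of $M$. The remaining subcase $k_0 = 5$, $m_0 = 4$ (in which $m_0 < k_0$, so Lemma \ref{chain2} is unavailable) will be handled by Proposition \ref{l1} applied to the chain $F_0 \subseteq F_1 \subseteq F_2 \subseteq F_3$: the non-decreasing codimensions satisfy $k_i \leq 5$ for $i \in \{0,1,2\}$, $k_1 = 5$ is odd, $m_1 = 9 \geq 5 = k_1$, and $k_1 \leq 2k_0 - 1 = 9$, so Proposition \ref{l1} yields that $F_0$ is homotopy equivalent to a sphere or real projective space.

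When $k_0 = 2$, the exception clause in Lemma \ref{chain2} will cover $k_1 \in \{2,3\}$ for any $m_0$, and also all $k_1 \leq 5$ whenever $m_0$ is odd. The remaining cases, $k_0 = 2$ with $m_0$ even and $k_1 \in \{4,5\}$, are more delicate. For $k_1 = 4$, I will invoke Lemma \ref{l2}, whose hypothesis that $F_1 \hookrightarrow F_2$ is $m_1$-connected is supplied by Corollary \ref{chain!} (using $k_2 \leq 5 \leq 2k_1 - 1$). For $k_1 = 5$, I will instead apply the \hyperref[odd]{Odd Codimension Lemma} to $F_1 \hookrightarrow F_2$ (which is again $m_1$-connected by Corollary \ref{chain!}) to conclude that $F_1$ is homotopy equivalent to $S^{m_1}$ or $\RP^{m_1}$. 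Lifting to universal covers via Lemma \ref{connlifting}, the \hyperref[connlemma]{Connectedness Lemma} gives that $\widetilde{F}_0 \hookrightarrow \widetilde{F}_1$ is $(m_0 - 1)$-connected; since $\widetilde{F}_1$ is then an integer cohomology sphere and $2(m_0 - 1) \geq m_0$, Poincar\'e duality forces $\widetilde{F}_0$ to be an integer cohomology sphere as well. Lemma \ref{codim2cyclic} then shows $\pi_1(F_1)$, and hence $\pi_1(F_0)$, is cyclic, and Theorem \ref{universal} identifies $F_0$ as a sphere, real projective space, or lens space.

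The main obstacle I anticipate is precisely the case $k_0 = 2$, $m_0$ even, $k_1 = 5$: the codimension gap rules out a direct appeal to Lemma \ref{chain2} or to Lemma \ref{l2}, so one is forced to classify $F_1$ first and then pull the classification down to $F_0$ through a careful universal-cover, periodicity, and fundamental group computation. A secondary bookkeeping challenge will be verifying that the various $\widetilde{F}_0$-outputs of Lemma \ref{chain2} at $k_0 = 4$, after being upgraded via Proposition \ref{quotients}, together exhaust the integer cohomology rings listed in conclusion (2) of the theorem.
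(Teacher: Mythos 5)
Your proposal is correct and follows essentially the same architecture as the paper's proof: set up the chain via Proposition \ref{chain}, apply Lemma \ref{bounds} with $b=6$ to get $2 \leq k_j \leq 5$ for $j\leq 2$, and then do a case analysis on $(k_0, k_1)$ resolved by Corollary \ref{chain!}, Lemma \ref{chain2}, Lemma \ref{l2}, and Proposition \ref{l1}. The one place you deviate is the subcase $k_0=2$, $m_0$ even, $k_1=5$: where the paper invokes Proposition \ref{l1} (requiring the four-term chain $F_0\subseteq F_1\subseteq F_2\subseteq F_3$ and a bound on $k_3$), you instead classify $F_1$ directly via the \hyperref[odd]{Odd Codimension Lemma} applied to $F_1\hookrightarrow F_2$, lift through Lemma \ref{connlifting}, use Poincar\'e duality to make $\widetilde{F}_0$ a $\Z$-cohomology sphere, and finish with Lemma \ref{codim2cyclic} and Theorem \ref{universal}; this is a slightly shorter route (only a three-term chain, and in fact with $m_0$ even Synge's theorem already makes $\pi_1(F_1)$ cyclic without Lemma \ref{codim2cyclic}) that arrives at the same conclusion, so both treatments are sound.
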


\begin{remark} When $\dim(F_0)=3$, we obtain  that $F_0^3$ is homotopy equivalent to a sphere, real projective space or lens space, in all but one case. The condition under which this occurs is given in Proposition \ref{dim3}. 
 In Example 7.3 of \cite{KKSS}, they describe $\Z_2^r$-actions with a fixed point on $n$-manifolds satisfying $r\geq n/6+1$  for which 
$3$-dimensional spherical space-forms with non-cyclic fundamental group show up as fixed-point set components of the action. 
\end{remark}

\begin{proposition}\label{dim3}
Let $M^n$ be a closed, positively curved manifold, and assume $\Z_2^r$ acts effectively by isometries on $M$ with non-empty fixed-point set, with $n\geq 7$. Let $F^3_x$ denote the fixed-point set component of $\Z_2^r$ containing $x$ of smallest dimension.  Suppose
		\[r\geq\frac{n}{6}+1.\]		
  Then $F^3$ is  homotopy equivalent to $S^3$, $\RP^3$, or a lens space, provided 
 for the chain of  closed, submanifolds, $F_i$ corresponding to fixed-point sets of $\Z_2^{r-i}$ the case $k_0=k_1=k_2=4$ does not occur. If $k_0=k_1=k_2=4$, then $F^3$ is homotopy equivalent to a spherical space form.
\end{proposition}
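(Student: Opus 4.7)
The plan is to combine the classification of positively curved 3-manifolds with the chain-based case analysis that drives the main theorems. By Hamilton~\cite{H} and Wolf~\cite{Wo}, any closed positively curved 3-manifold is homotopy equivalent to a spherical space form $S^3/\Gamma$. This immediately gives the conclusion of the proposition in the exceptional case $(k_0,k_1,k_2)=(4,4,4)$ and reduces the remaining cases to showing that $\pi_1(F_0)\cong\Gamma$ is cyclic, since precisely the lens spaces (including $S^3$ and $\RP^3$) are the spherical space forms with cyclic fundamental group.

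The low-codimension cases are handled by existing lemmas. If $k_0=1$, Proposition~\ref{codimsmall}(1) applies directly to $F_0\hookrightarrow F_1$ (no connectivity hypothesis is needed when $k=1$). Otherwise $k_0\geq 2$, and Lemma~\ref{bounds} with $b=6$ applied to $r\geq n/6+1 = n/b+b/2-2$ yields $2\leq k_j\leq 5$ for $0\leq j\leq 2$. When $k_0=2$, the odd dimension $m_0=3$ lets Proposition~\ref{codimsmall}(2) apply without a connectivity hypothesis, and the $\CP^{3/2}$ option is vacuous; the remaining possibilities are $S^3$, $\RP^3$, or a lens space. When $k_0=3$, the bound $k_1\leq 5 < 6 = 2k_0$ combined with Corollary~\ref{chain!} shows that $F_0\hookrightarrow F_1$ is $3$-connected, and since $m_0=3\geq k_0$, the \hyperref[odd]{Odd Codimension Lemma} gives $F_0$ homotopy equivalent to $S^3$ or $\RP^3$.

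For $k_0\in\{4,5\}$, monotonicity combined with $k_j\leq 5$ leaves only the four sequences $(4,4,4)$, $(4,4,5)$, $(4,5,5)$, $(5,5,5)$. The first is the announced exceptional case. For each of the other three, at least one of $k_1,k_2$ equals $5$ (odd), and I would apply Proposition~\ref{l1} to obtain $F_0$ homotopy equivalent to $S^3$ or $\RP^3$. Its hypotheses can be checked directly: $m_1=3+k_0\geq 7\geq k_1$; the inequality $k_1\leq 5\leq 2k_0-1$ places us in Case~1; $k_2\leq 5\leq 2k_1-1$; and the conditions $k_3\leq 9 = 2k_2-1$ together with the existence of $F_4$ (i.e.\ $r\geq 4$) follow by dimension counting. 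Specifically, if $r=3$ then $n=3+k_0+k_1+k_2\in\{16,17,18\}$, contradicting $r\geq n/6+1$; and if $k_3\geq 10$ then $n\geq 3+k_0+k_1+k_2+10(r-3)$, which combined with $n\leq 6(r-1)$ forces $r\leq 2$, again a contradiction.

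The main obstacle will be articulating clearly why $(4,4,4)$ is genuinely exceptional: with both $k_1$ and $k_2$ even, Proposition~\ref{l1} cannot be invoked, and the codim-$4$ analysis via Proposition~\ref{codimsmall}(3) allows $F_1$ to have non-cyclic fundamental group (for instance, a subgroup of a generalized quaternion group arising as $\pi_1$ of a quotient whose universal cover has the integer cohomology of $S^3\times\HP^1$), so cyclicity of $\pi_1$ cannot in general be pulled back along the chain to $F_0$. Setting this case aside, the rest of the proof is a bookkeeping exercise of matching each codimension pattern to the appropriate previously-established lemma and verifying the dimension-count inequalities above.
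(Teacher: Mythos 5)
Your proof is correct and follows essentially the same strategy as the paper: establish $2\le k_j\le 5$ for $0\le j\le 2$ via Lemma~\ref{bounds}, apply the codimension lemmas at the first odd-codimension step in the chain, and pull the sphere/real-projective-space conclusion back to $F_0$, with $(4,4,4)$ singled out as the exceptional case handled by Hamilton and Wolf. The only organizational difference is that for $k_0\in\{4,5\}$ you invoke Proposition~\ref{l1} directly and verify its hypotheses (in particular $r\ge 4$ and $k_3\le 2k_2-1$) by a dimension count, whereas the paper inlines a parallel argument (showing $k_2=k_3=5$ and applying Part~1 of Lemma~\ref{chain2} to a higher $F_i$ before pulling back); the underlying ideas coincide.
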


\begin{proof} If $k_0=k_1=k_2=4$, since $F^3$ is positively curved, it follows by work of Hamilton \cite{Ha} and Wolf \cite{Wo} that $F^3$ is homotopy equivalent to a spherical space form. 

We now suppose that the case $k_0=k_1=k_2=4$ does not occur.    Note first that if $k_0=1$ or $2$,  the result follows from  Parts 1 and 2 of Lemma \ref{chain2}, respectively.
By Lemma \ref{bounds}, we have that if $k_0\geq 2$, then $2\leq k_i\leq 5$ for $0\leq i\leq 2$. It then follows that $k_1\leq 2k_0-1$ for $k_0\geq 3$. In particular, we may then apply Lemma \ref{chain2} to obtain the result for $k_0=3$.

It remains to consider the cases where $k_0=4$ or $5$. 
When $k_0=4$, either $k_1=4$ or $5$. In both cases, we have that  
$k_2=5$. Moreover, using a similar argument as in Lemma \ref{bounds}, it follows that $k_3=5$ and thus $k_3\leq 2k_2-1$.  By Corollary \ref{chain!}, $F_2$ is maximally connected in $F_3$. It then from Part 1 of Lemma \ref{chain2} that $F_3$ is homotopy equivalent to a sphere or real projective space. Since $F_i$ is maximally connected in $F_{i+1}$, $0\leq i\leq 2$,  the result follows.
Likewise, when $k_0=5$, we may then apply Part 1 of Lemma \ref{chain2} to obtain that $F_1$ is homotopy equivalent to a sphere or real projective space,  and again since $F_0$ is maximally connected in $F_1$, the result follows.

\end{proof}

We are now ready to prove Theorem \hyperref[TA]{A}.
\begin{proof}[Proof of Theorem A]
 Let $M^n$ be a closed, positively curved manifold, and assume $\Z_2^r$ acts effectively by isometries on $M$ with fixed-point $p$ and fixed-point set component $F$ at $p$, with $n\geq7$  and $r\geq\frac{n}{6}+1$.
By Proposition \ref{chain} we have a chain of inclusions
\[F_0\subseteq F_1 \subseteq \cdots \subseteq F_j \subseteq F_{j+1}\subseteq\cdots {\subseteq} \,F_{r-1}{\subseteq}\, M,\]
\noindent where the effective kernel of the induced  $\Z_2^r$-action on $F_j$ is 
 a corank-$j$ subgroup of $\Z_2^r$, $\dim\(F_j\)=m_j$, $k_j = \codim(F_j\subset F_{j+1})$,  and $k_j\leq k_{j+1}$ for all $0\leq j\leq r-1$.

Suppose $k_j=1$ for some $0\leq j\leq r-1$. Since the $k_j$ constitute a non-decreasing sequence of positive integers, we know that $k_0=1$.  
It follows from Part 1 of Lemma \ref{chain2} 
that $F_0$ is homotopy equivalent to either $S^{m_0}$ or $\RP^{m_0}$ and hence Theorem \hyperref[TA]{A} holds.

We now assume that $k_0\geq 2$. By Lemma \ref{bounds}, it follows that $2\leq k_i\leq 5$ for $0\leq i\leq 2$. Recall that we have two cases: Case 1, where $m_0\geq k_0$ and Case 2, where $m_0<k_0$. 
\vspace{.1cm}

\noindent{\bf Case 1,  where ${\bf m_0\geq k_0}$}: 
We first assume that $k_0=2$.  If $2\leq k_1\leq 3$, 
then by Corollary \ref{chain!}, $F_0$ is maximally connected in $F_1$ and the desired result follows. We suppose then that $k_1\in\{4, 5\}$. If $k_1=4$, then $k_2 \le 2k_1-1$ since $k_2 \le 5$, and hence $F_1 \hookrightarrow F_2$ is $m_1$-connected by Corollary \ref{chain!}.
We then may apply Lemma \ref{l2} to obtain the result. If instead $k_1=5$, using a similar argument as in Lemma \ref{bounds}, it follows that $k_3=5$ and thus $k_3\leq 2k_2-1$.
Since $F_0$ is at least $(m_0-1)$-connected by the \hyperref[connlemma]{Connectedness Lemma},
then we may then apply Lemma \ref{l1} to obtain the result.

We now assume that $k_0\geq 3$. Since $k_1\leq 5$, it follows that $k_1\leq 2k_0-1$ and by Corollary \ref{chain!}, we have $F_0$ is maximally-connected in $F_1$. For $3\leq k_0\leq 5$, the result then follows by Part 1 or Part 3 of Proposition \ref{codimsmall}, depending on the parity of $k_0$.
\vspace{.1cm}

\noindent{\bf Case 2,  where ${\bf m_0< k_0}$}: 
Here $k_0=5$ since $m_0 \ge 4$. It is straightforward to verify that in this case the hypotheses of Case a of Lemma \ref{l1} are satisfied and thus we obtain the desired result.

This concludes the proof of Theorem \hyperref[TA]{A}.
\end{proof}

\section{The proof of Theorem B}\label{6}
We now give the proof of Theorem \hyperref[TB]{B}, which is immediate once we prove the following theorem.

\begin{theorem}\label{Btech} 
Let $M^n$ be a closed, positively curved Riemannian manifold, and assume $\Z_2^r$ acts effectively by isometries on $M$ with a fixed point, $x\in M$, with $n\geq 9$. Suppose that 
		\[r\geq\frac{n}8+2.\]		
\noindent Let $F^m_x$ be a connected component of the fixed-point set of $\Z_2^r$ containing $x$.  Then at least one of the following holds:
\begin{enumerate}
\item $F^m$ is homotopy equivalent to one of  $S^m$, $\RP^m$, $\CP^{\frac{m}2}$, or a lens space;
\item   The universal cover of $F^m$, $\widetilde{F}^m$, has the integral cohomology ring of   $S^m$, $S^3 \times \HP^{\frac{m-3}{4}}$, $N_j^{m}$ with $m\equiv 2, 3 \pmod 4$, $\CP^{\frac{m}{2}}$, $S^2\times \HP^{\frac{m-2}{4}}$, $E^{m}_{\ell}$, $\ell\geq 2$ with  $m\equiv 2 \pmod 4$, or $\HP^{\frac{m}{4}}$; or
\item $F^m$ has the $\Z_2$-cohomology ring of $S^{m}$, $\CP^{m/2}$, $\RP^m$, 
 $\HP^{m/4}$, $S^2\times \HP^{\frac{m-2}4}$, $\RP^2\times \HP^{\frac{m-2}4}$, $S^3/\Z_k\times\HP^{\frac{m-3}4}$ with $k\in\{1,2,4\}$, $S^{m}/Q_i$ for $i\in\{8,16\}$ and $m_j\equiv 3\pmod{4}$, or a lens space.
 \end{enumerate}

\end{theorem}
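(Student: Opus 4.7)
The plan is to follow the chain-of-fixed-points strategy from Theorem A, extended to the larger codimension range allowed by the weaker rank hypothesis. By Proposition \ref{chain} we obtain an ascending chain of fixed-point set components $F_0 \subseteq F_1 \subseteq \cdots \subseteq F_{r-1} \subseteq M$ whose codimensions satisfy $k_j \leq k_{j+1}$, where the effective kernel on $F_j$ is a corank-$j$ subgroup. If $k_0 = 1$, Part 1 of Proposition \ref{codimsmall} (no connectivity assumption is needed for $k = 1$) yields $F_0 \simeq S^{m_0}$ or $\RP^{m_0}$, placing $F_0$ in conclusion (1). Henceforth I assume $k_0 \geq 2$, so $m_0 \geq 4$. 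Since the hypothesis $r \geq n/8 + 2$ is exactly of the form $r \geq n/b + b/2 - 2$ with $b = 8$, Lemma \ref{bounds} forces $2 \leq k_j \leq 7$ for $0 \leq j \leq 3$.

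I would then split into the two cases $m_0 \geq k_0$ and $m_0 < k_0$. In the first case, for each $k_0 \in \{2, \ldots, 7\}$, I would check whether $k_1 \leq 2k_0 - 1$. When this holds, Corollary \ref{chain!} gives the $m_0$-connectivity of $F_0 \hookrightarrow F_1$, after which one directly applies Proposition \ref{codimsmall} (for $k_0 \in \{2,3,4\}$), the \hyperref[odd]{Odd Codimension Lemma} (for $k_0 \in \{5,7\}$), or the \hyperref[2mod4]{$2 \pmod 4$ Codimension Lemma} (for $k_0 = 6$), using Lemma \ref{4z} and Theorem \ref{z2anypi1} to enumerate the admissible rings. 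Since $k_1 \leq 7 \leq 2k_0 - 1$ is automatic for $k_0 \geq 4$, the only remaining configurations are $(k_0, k_1) \in \{(2,4), (2,5), (2,6), (2,7), (3,6), (3,7)\}$; these near-doubling subcases are precisely the scope of Lemmas \ref{l2} and \ref{l3}. Their hypothesis that $F_1 \hookrightarrow F_2$ be $m_1$-connected is verified by $k_2 \leq 7 \leq 2k_1 - 1$ via Corollary \ref{chain!}.

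For $m_0 < k_0$ we must have $k_0 \in \{5, 6, 7\}$ and $4 \leq m_0 < k_0$. Here the \hyperref[connlemma]{Connectedness Lemma} does not yield directly useful connectivity of $F_0 \hookrightarrow F_1$, so I would move one step up the chain: since $m_1 = m_0 + k_0 \geq k_0 \geq k_1$, and $k_2 \leq 7 \leq 2k_1 - 1$, Corollary \ref{chain!} gives the $m_1$-connectedness of $F_1 \hookrightarrow F_2$, after which the appropriate higher codimension lemma identifies $F_1$ up to one of the listed cohomological types. Observing that $F_0$ is a component of the fixed-point set of the residual involution on $F_1$, I would then apply Theorem \ref{Bredon}, together with Theorem \ref{smith} and Proposition \ref{Su}, to transfer the ring structure down to $F_0$ and place it into conclusion (1) or (3).

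The principal obstacle I anticipate is the bookkeeping in the $m_0 < k_0$ sub-case: when $k_0 \in \{6,7\}$ the relevant codimension lemma supplies only $\Z_2$-cohomology information about $F_1$, and after restricting to a component of the fixed-point set of the residual $\Z_2$ action on $F_1$, one must verify that each $\Z_2$-cohomology ring that arises for $F_0$ genuinely belongs to the list in conclusion (3) (including lens spaces, the products $S^a \times \HP^b$, their $\Z_2$-quotients, and $S^{m_0}/Q_i$). A careful treatment across the residues of $m_0 \pmod 4$, combined with Lemma \ref{freez2} to rule out four-periodicity-destroying $\Z_2 \times \Z_2$ quotients and the group-cohomology identifications of Lemma \ref{cohomQ}, should close the remaining sub-cases.
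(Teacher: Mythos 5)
Your overall architecture matches the paper through most of Case 1, but there are two issues to flag, one minor and one substantial.

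The minor issue: after assuming $k_0 \geq 2$ you write ``so $m_0 \geq 4$'', but this does not follow; Theorem B carries no lower bound on $m$. The paper disposes of $m_0 \leq 2$ via the classification of low-dimensional closed manifolds and Gauss--Bonnet, and of $m_0 = 3$ via Corollary \ref{cors3}, before assuming $m_0 \geq 4$. You need those lines.

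The substantial issue is your Case $m_0 < k_0$. You claim the connectivity of $F_0 \hookrightarrow F_1$ is not directly useful and propose to instead identify $F_1$ and then recover $F_0$ as a fixed-point set of the residual $\Z_2$-action via Bredon's theorem, Smith theory, and Proposition \ref{Su}. This has a real gap: Bredon's Theorem \ref{Bredon} only treats $\Z_2$-cohomology $\FP^n$'s and Smith theory (Theorem \ref{smith}) only treats $\Z_2$-cohomology spheres, but the list you obtain for $F_1$ via the higher codimension lemmas also contains $\RP^2 \times \HP^{\ast}$, $S^3/\Z_k \times \HP^{\ast}$, lens spaces, and $S^{m_1}/Q_i$, none of which those theorems cover. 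You anticipate this as ``bookkeeping'' in your last paragraph, but it is actually a missing argument: there is no general Smith-type restriction on fixed-point sets of involutions on these spaces, and your proposal gives no way to close those subcases. Moreover, the whole detour is unnecessary. Corollary \ref{chain!} has no hypothesis $m_j \geq k_j$; it only requires $k_1 < 2k_0$. Since $k_0 \geq 5$ and $k_1 \leq 7 < 10 \leq 2k_0$, the corollary gives that $F_0 \hookrightarrow F_1$ is $m_0$-connected directly. This is what the paper does: it applies Lemmas \ref{chain2} and \ref{chain3} to $F_1$ (where $m_1 = m_0 + k_0 \geq 9 > k_1$, so the $m \geq k$ hypothesis is available), and then pushes the conclusion down to $F_0$ using the $m_0$-connectivity of $F_0 \hookrightarrow F_1$. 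The only thing the \hyperref[connlemma]{Connectedness Lemma} alone cannot supply here is exactly what Corollary \ref{chain!} was built to supply via the Borel formula and Observation \ref{connrep}; you use that corollary repeatedly elsewhere but overlook it at the one place it is decisive.
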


\begin{proof}[Proof of Theorem 6.1] 
Let $M^n$ be a closed, positively curved manifold, and assume $\Z_2^r$ acts effectively by isometries on $M$ with fixed-point $x$ and fixed-point set component $F_0$ containing $x$. Suppose further that $n\geq 9$ and $r\geq\frac{n}{8}+2$.
As in the proof of Theorem \hyperref[TA]{A} we have a chain of inclusions
\[F_0\subseteq F_1 \subseteq \cdots \subseteq F_j \subseteq F_{j+1}\subseteq\cdots {\subseteq} \,F_{r-1}{\subseteq}\, M,\]
\noindent where the effective kernel of the induced  $\Z_2^r$-action on $F_j$ is 
 a corank-$j$ subgroup of $\Z_2^r$, $\dim\(F_j\)=m_j$, $k_j = \codim(F_j\subset F_{j+1})$,  and $k_j\leq k_{j+1}$ for all $0\leq j\leq r-1$.

As we saw in the proof of Theorem \hyperref[TA]{A}, the result holds when $k_0=1$, so we assume that $k_j\geq 2$ for all $0\leq j\leq r-1$. 
In the cases where $0\leq m_0\leq 2$, since $F_0$ is a closed totally geodesic submanifold, Theorem B holds by the classification of closed manifolds of dimensions $0$ and $1$, and by the Gauss-Bonnet Theorem in dimension $2$. Note that if $m_0=3$, then Theorem \hyperref[Btech]{6.1} follows from Corollary \ref{cors3}, so we assume $m_0\geq 4$.

By Lemma \ref{bounds}, we have $2\leq k_j\leq 7$ for $0\leq j\leq 3$. 
As in the proof of  Theorem \hyperref[TA]{A}, we have two cases: Case 1, where $m_0\geq k_0$, and Case 2, where $m_0<k_0$. 
\vspace{.1cm}

\noindent{\bf Case 1,  where ${\bf m_0\geq k_0}$}:
We break Case 1 into two further subcases,   where $k_1\leq 2k_0-1$, and  where $k_1\geq 2k_0$.  We first assume that $k_1\leq 2k_0-1$, and note that we may apply Lemma \ref{chain2} and Lemma \ref{chain3} to obtain the result. If instead we assume that $k_1\geq 2k_0$, since $k_1\leq 7$, we need only consider $2\leq k_0\leq 3$. When $k_0=2$, then $4\leq k_1\leq 7$ and for $k_0=3$, $6\leq k_1\leq 7$.  In both cases $k_2 \le 2k_1-1$ since $k_2 \le 7$, and hence $F_1 \hookrightarrow F_2$ is $m_1$-connected by Corollary \ref{chain!}.
We may then apply Lemma \ref{l3} to each of these cases to obtain the result.
\vspace{.1cm}

\noindent{\bf Case 2,  where ${\bf m_0< k_0}$}:
Here, $5\leq k_0\leq 7$ since $m_0 \ge 4$. Since $ 2 \le k_j \le 7$ for $j=1, 2$, and $k_1 \le k_2$, we obtain that $k_1\leq 2k_0-1$ and $k_2\leq 2k_1-1$.  It follows by Corollary \ref{chain!} that $F_i$ is  maximally connected in $F_{i+1}$ for $i\in \{0, 1\}$.  Since $m_1\geq k_1$, we may then apply Lemma \ref{chain2} and Lemma \ref{chain3} to see that  $F_1$ is as in the conclusion of Theorem \hyperref[Btech]{6.1}. Since $F_0$ is $m_0$-connected in $F_1$, the result follows.

This concludes the proof of Theorem \hyperref[Btech]{6.1}.
\end{proof}


\end{document}